\journal{\phantom{.}}
\newtheorem{theorem}{Theorem}
\newtheorem{lemma}{Lemma}
\newtheorem{remark}{Remark}
\newtheorem{assumption}{Assumption}
\title{Reduced Lagrange multiplier approach for non-matching coupling of mixed-dimensional domains}
\author[sissa]{Luca Heltai}
\author[polimi]{Paolo Zunino}
\address[sissa]{International School for Advanced Studies, Via Bonomea 265, 34136, Trieste, TS, Italy}
\address[polimi]{MOX, Department of Mathematics, Politecnico di Milano, Piazza Leonardo da Vinci 32, 20133 Milano, MI, Italy}
\begin{document}

\begin{frontmatter}

\begin{abstract}
Many physical problems involving heterogeneous spatial scales, such as the flow through fractured porous media, the study of fiber-reinforced materials or the modeling of the small circulation in living tissues -- just to mention a few examples -- can be described as coupled partial differential equations defined in domains of heterogeneous dimensions that are embedded into each other. This formulation is a consequence of geometric model reduction techniques that transform the original problems defined in complex three-dimensional domains into more tractable ones. The definition and the approximation of coupling operators suitable for this class of problems is still a challenge. The main objective of this work is to develop a general mathematical framework for the analysis and the approximation of partial differential equations coupled by non-matching constraints across different dimensions. Considering the non standard formulation of the coupling conditions, we focus on their enforcement using Lagrange multipliers. In this context we address in abstract and general terms the well posedness, the stability, the robustness of the problem with respect to the smallest characteristic length of the embedded domain. We also address the  the numerical approximation of the problem and we discuss the \emph{inf-sup} stability of the proposed numerical scheme for some representative configuration of the embedded domain. 
The main message of this work is twofold: from the standpoint of the theory of mixed-dimensional problems, we provide general and abstract mathematical tools to formulate coupled problems across dimensions. From the practical standpoint of the numerical approximation, we show the interplay between the mesh characteristic size, the dimension of the Lagrange multiplier space and the size of the inclusion in representative configurations interesting for applications. The latter analysis is complemented with illustrative numerical examples.
\end{abstract}

\begin{keyword}
mixed-dimensional problems \sep non-matching coupling \sep Lagrange multipliers \sep model reduction \sep numerical approximation
\end{keyword}

\end{frontmatter}

\section{Introduction}

The definition, analysis and approximation of boundary value problems governed by partial differential equations with incomplete constrains at the boundary is a relevant topic in computational fluid dynamics\cite{HRT96}. The so called \emph{do nothing} conditions were introduced to accommodate incomplete boundary data. These techniques were applied to \emph{geometric multiscale problems}, where domains with different dimensionality are coupled together by means of suitable interface conditions\cite{Formaggia2001561}. Since problems of higher dimensionality (PDEs in two or three dimensions) are supplied with interface data of low dimensionality (one dimensional or lumped parameter models), the incomplete information must be supplemented by some modeling assumptions.
These issues become more challenging if we consider the coupling of PDEs defined in domains of  heterogeneous dimensions that are embedded into each other. These models represent many real physical problems, such as the flow through fractured porous media, the study of fiber-reinforced materials or the modeling of the small circulation (microcirculation) in biological tissues (a representative geometrical configuration of such problems is shown in Figure \ref{fig:domain}, left panel). In these examples, physical models defined in inclusions characterized by a small dimension can be approximated using models of lower dimensionality, giving rise to coupled problems across multiple spatial scales. As a result, the information that is transferred at the interface can not match, because of the dimensionality gap. We describe these cases as non-matching mixed-dimensional coupled problems.

The main objective of this work is to develop a general mathematical framework
for the analysis and the approximation of PDEs coupled by non-matching
constraints across different dimensions. Considering the non standard
formulation of the coupling conditions, we focus on their enforcement using
Lagrange multipliers (LM). The enforcement and approximation of
boundary/interface conditions using LM is a central topic in the development of
the finite element method\cite{Babuska1973179,Bramble,Pitkaranta,Boffi2021} among many
others. More recently, the LM method has been applied to couple PDEs across
interfaces\cite{BURMAN20102680,Melenk} just to mention a few examples of a broad
field in the literature.

The novelty of this work with respect to such literature consists in the use of
the LM method to couple equations defined on domains with heterogeneous dimensions.
For this reason, an essential aspect of the work is to shed light on the
interaction between the LM formulation and the restriction/extension operators
that govern the transition of PDEs across spatial dimensions. We work in the
abstract framework of saddle point problems in Hilbert spaces and their
approximation through Mixed Finite Elements\cite{Boffi2013}. An abstract
framework based on exterior calculus has recently appeared for the formulation
and the approximation of mixed-dimensional (coupled)
problems\cite{Boon2021757,Boon2022}. We will explore the intersection of this
work with ours in future works.

We consider here three main aspects of the problem. First, we analyse under what conditions the stability of the LM formulation is preserved after the application of the dimensional restriction operator. In other words, we will study how PDEs coupled by LM behave when a mixed-dimensional formulation is adopted. Second, we focus on the gap between physically relevant quantities at the interface. At the level of the continuous problem formulation, we introduce an approximation parameter, $N$, that affects the richness of the LM space that matches heterogeneous dimensions (being $N=1$ the fully nonmatching scenario and $N\rightarrow\infty$ the perfectly matched case) and we study how it influences the satisfaction of the boundary constraints, or equivalently the magnitude of the gap between interface unknowns. Third, we analyze the difference between the full-scale formulation and the mixed-dimensional formulation, putting into evidence how the so called \emph{dimensionality reduction error} varies with the characteristic spatial dimension of the problem and with the approximation parameter, $N$. 

With this work, we aim at shedding light on the common mathematical framework that embraces many recent works involving the applications mentioned above. LM formulations for Dirichlet-Neumann type interface conditions for these problems were recently proposed\cite{AlzettaHeltai-2020-a,HeltaiCaiazzo-2019-a,HeltaiCaiazzoMueller-2021-a}. In these works, a three dimensional bulk problem for mechanical deformations is coupled to a one dimensional model for the mechanical behavior of fibers and vessels, respectively. Also, the complementary problem made of thin and slender mechanical structures immersed into a fluid or solid continuum is particularly relevant\cite{Steinbrecher20201377,Hagmeyer2022} and could be addressed with the proposed tools. {\color{black}A preliminary mathematical study of these problems was recently developed\cite{Kuchta2021558,Kuchta2019375,Kuchta2016B962,Laurino2019,boulakia:hal-03501521}.}

After considering the classical Dirichlet-Neumann interface conditions, we also address Robin type transmission conditions. This variant of the problem formulation is particularly significant for multiscale mass transport and fluid mechanics problems\cite{DAngelo2008,Cattaneo20141347,Kppl2018,Koch2020,Possenti20213356}. Here, we show how a prototype of these applications can be formulated and analyzed in the framework of perturbed saddle point problems.

We organize the work as follows. In section \ref{sec:lagrange} we introduce the problem formulation for Dirichlet transmission conditions and we recall some fundamental assumptions and results. The \emph{reduced} Lagrange multiplier formulation of the problem is presented and analyzed in section \ref{sec:reduced}, where we state the well posedness of the problem. The extension to Robin type transmission conditions is also addressed here. We illustrate the behavior of the \emph{dimensionality reduction error} in Section~\ref{sec:dimensionality-reduction-error}, and in 
Section \ref{sec:isomorphism} we discuss the properties of the restriction and extension operators when the small inclusion can be described as a mapping of a reference domain by means of an isomorphism. Section \ref{sec:fourier} discusses the particular but very important case of inclusions isomorphic to a cylinder, where we enforce the constraints at the boundary of the inclusion by means of the projection onto a Fourier space truncated to the $N$-th frequency. The larger $N$, the better is the satisfaction of the constraint at the boundary. We introduce the numerical approximation of the problem and its properties in section \ref{sec:approximation}, and finally in section \ref{sec:numerics} we discuss some numerical experiments in support of this theory.

\section{Model problem and Lagrange multiplier formulation}\label{sec:lagrange}

Consider the following model problem:
\begin{subequations}
    \label{eq:model-problem}
    \begin{align}
        \label{eq:model-problem-a}
         -\Delta u &= f && \text{ in } \Omega\setminus V\\
        \label{eq:model-problem-b}
         u &= 0 && \text{ on } \partial \Omega \\
        \label{eq:model-problem-c}
        u &= g && \text{ on } \Gamma := \partial V,
    \end{align}
\end{subequations}
where $V$, represented in Figure \ref{fig:domain}, is a set of (possibly disconnected) immersed \textit{inclusions} (such as sheets, vasculature networks, or macroparticles, see for example Figure \ref{fig:domain-bis}, left panel), denoted as $V := \cup_i V_i$, and whose boundary is denoted by $\Gamma := \cup_i \Gamma_i \equiv \cup_i \partial V_i = \partial V$.
\begin{figure}
    \centering
    \includegraphics[width=\textwidth]{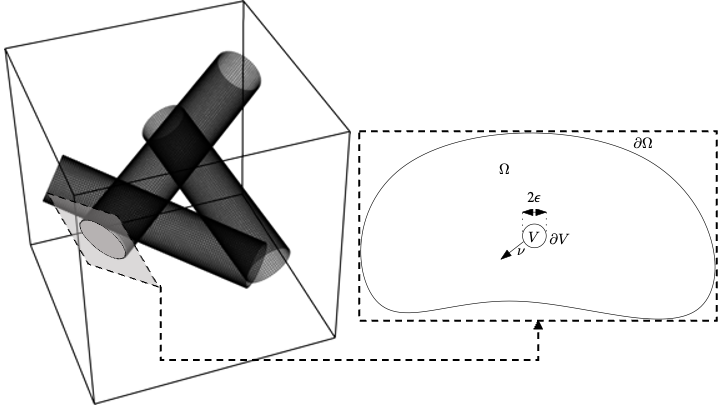}
    \caption{(left panel) Geometry of randomly placed cylindrical fibers in a three-dimensional continuum. 
    The cylinders have radius $\epsilon=0.2$ and height $\eta=0.5$ are placed randomly in a non-overlapping way and at a finite distance from the boundary of the domain.
    (right panel) Example of the two dimensional section of a single one fiber, corresponding to the domain $V$ of radius $\epsilon$, embedded in a macroscopic domain $\Omega$.}
    \label{fig:domain}
\end{figure}

To develop our approach, we consider a weak formulation of \eqref{eq:model-problem-a} that uses Lagrange multipliers to impose the given boundary condition on $\Gamma$. In particular, we start by testing \eqref{eq:model-problem-a} with smooth test functions $v$ \emph{defined on the entire domain $\Omega$}, and that are zero on $\partial\Omega$
\begin{equation*}
    (\nabla u, \nabla v)_{\Omega \setminus V} + \duality{\nabla u \cdot n, v}_{\Gamma} = (f,v)_{\Omega \setminus V},
\end{equation*}
where $n$ denotes the outward normal vector to $V$, and we extend the problem to the entire domain $\Omega$ by adding to it the weak form of $-\Delta u = \tilde f$ in $V$, and impose continuity of the function $u$ at the interface $\Gamma$:
\begin{equation*}
    (\nabla u, \nabla v)_{\Omega} + \duality{(\nabla u^+-\nabla u^-) \cdot n, v}_{\Gamma} = (\tilde f,v)_{\Omega}.
\end{equation*}

Here $\tilde f \in L^2(\Omega)$ is an arbitrary extension of $f$ in the entire $\Omega$.
With a little abuse of notation, from now on we will not distinguish  between $\tilde f$ and $f$.
We denote respectively $w^+,\,w^-$ the outer and inner values of a function $w$ with respect to $\Gamma = \partial V$ along the outer normal direction $n$, and  we denote by $[w]=w^+-w^-$ the jump of $w$ across $\Gamma$. Then equations \eqref{eq:model-problem-a}-\eqref{eq:model-problem-b} are equivalent to
\begin{equation}\label{eq:model-problem-ab}
    (\nabla u, \nabla v)_{\Omega} + \duality{[\nabla u] \cdot n, v}_{\Gamma} = (f,v)_{\Omega},\ \forall v\in H^1_0(\Omega),
\end{equation}
where we need an additional condition to impose the value of $u$ on $\Gamma$.
The natural way to proceed, is to impose such boundary condition through a
Lagrange multiplier, resulting in the following weak problem: 

Given $f\in H^{-1}(\Omega)$ and $g\in H^{1/2}(\Gamma)$, find $u\in
H^1_0(\Omega),\,\lambda \in H^{-\frac12}(\Gamma)$ such that
\begin{subequations}
    \label{eq:model-problem-weak}
    \begin{align}
            \label{eq:model-problem-weak-a}
            &(\nabla u, \nabla v)_{\Omega} + \duality{\lambda, v}_{\Gamma} = (f,v)_{\Omega},\ \forall v\in H^1_0(\Omega), && \forall v \in H^1_0(\Omega)
            \\
            \label{eq:model-problem-weak-b}
            &\duality{u,q}_{\Gamma} = \duality{g,q}_{\Gamma}, && \forall q\in H^{-\frac12}(\Gamma).
    \end{align}
\end{subequations}

With appropriate conditions on the regularity of $\Gamma$, this problem admits a unique solution, and using integration by parts it is straightforward to show that
\begin{equation*}
    \lambda  = [\nabla u] \cdot n\, \ \textrm{in} \ H^{-\frac12}(\Gamma).
\end{equation*}

We use the following notations for Sobolev spaces in $\Omega$ and $\Gamma := \partial V$:
\begin{equation}
    \begin{aligned}
        \spaceV & := H^1_0(\Omega), \qquad &&\|u\|_{\spaceV} := \|u\|_{1,\Omega} \\
        \spaceQ & := H^{-\frac12}(\Gamma), \qquad &&\|\lambda\|_{\spaceQ} := \|\lambda\|_{-\frac12,\Gamma},
    \end{aligned}
\end{equation}
with which Problem \eqref{eq:model-problem-weak} can be represented in operator form as follows: 

given $f\in\spaceV',\,g\in\spaceQ'$ find $u\in\spaceV,\,\lambda\in\spaceQ$ such that
    \begin{subequations}\label{eq:model-problem-weak-abstract}
            \begin{align}
        &\duality{Au,v} + \duality{B^T \lambda, v} = \duality{f,v} && \forall v \in \spaceV
        \\
        &\duality{Bu,q} = \duality{g,q} && \forall q \in \spaceQ
    \end{align}
    \end{subequations}
where
\begin{align}
    \label{eq:defA}
    & A: \spaceV \equiv H^1_0(\Omega) \mapsto \spaceV' \equiv H^{-1}(\Omega)
    \ \mathrm{with} \ &&\duality{Au,v} = (\nabla u, \nabla v)_{\Omega}
    \\
    \label{eq:defB}
    & B: \spaceV \mapsto \spaceQ' \equiv H^{\frac12}(\Gamma)
    \ \mathrm{with} \ &&\duality{Bu,q} = \duality{u,q}_{\Gamma}
    \\
    \nonumber
    & B^T: \spaceQ \mapsto \spaceV'. &&
\end{align}

To demonstrate the well-posedness of \eqref{eq:model-problem-weak} we first address the main properties of the operators $A$ and $B$, which will be also central in the development of the reduced Lagrange multiplier approach.
\begin{theorem}[Infsup on $A$]
    \label{theo:infsup-A}
    The operator $A: \spaceV \mapsto \spaceV'$  is symmetric, and it satisfies the infsup condition, i.e., there exists a positive real number $\alpha>0$ such that
    \begin{equation}
        \infsup{0\neq u\in\spaceV}{0\neq v\in\spaceV} \frac{\duality{Au, v}}{\|u\|_{\spaceV} \|v\|_{\spaceV}} \geq \alpha > 0.
    \end{equation}
\end{theorem}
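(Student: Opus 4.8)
The plan is to exploit the symmetry of $A$ and reduce the inf-sup statement to the coercivity of the Dirichlet form on $\spaceV = H^1_0(\Omega)$. Symmetry is immediate: since $\duality{Au,v} = (\nabla u,\nabla v)_\Omega$ and the $L^2$ inner product of gradients is symmetric, we have $\duality{Au,v} = \duality{Av,u}$ for all $u,v \in \spaceV$. Because $A$ is symmetric and we expect it to be positive definite, the most economical route is to first prove the coercivity estimate $\duality{Au,u} \geq \alpha \|u\|_{\spaceV}^2$ and then derive the inf-sup condition from it by a single admissible choice of the test function.

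First I would establish coercivity. By definition $\duality{Au,u} = \|\nabla u\|_{0,\Omega}^2$, which controls only the $H^1$ seminorm, whereas $\|u\|_{\spaceV} = \|u\|_{1,\Omega}$ is the full $H^1$ norm. The bridge between the two is the Poincaré--Friedrichs inequality: since $\Omega$ is bounded (hence contained in a slab of finite width) and functions in $H^1_0(\Omega)$ vanish on $\partial\Omega$, there is a constant $C_P > 0$ with $\|u\|_{0,\Omega} \leq C_P \|\nabla u\|_{0,\Omega}$ for all $u \in \spaceV$. Consequently $\|u\|_{1,\Omega}^2 = \|u\|_{0,\Omega}^2 + \|\nabla u\|_{0,\Omega}^2 \leq (1+C_P^2)\|\nabla u\|_{0,\Omega}^2$, which gives $\duality{Au,u} = \|\nabla u\|_{0,\Omega}^2 \geq \alpha \|u\|_{\spaceV}^2$ with $\alpha := (1+C_P^2)^{-1} > 0$.

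Finally I would upgrade coercivity to the inf-sup condition. For any fixed $u \neq 0$, choosing the admissible test function $v = u$ in the inner supremum yields
\begin{equation*}
    \sup_{0\neq v\in\spaceV} \frac{\duality{Au,v}}{\|u\|_{\spaceV}\|v\|_{\spaceV}} \geq \frac{\duality{Au,u}}{\|u\|_{\spaceV}^2} \geq \alpha .
\end{equation*}
Since this bound is uniform in $u$, taking the infimum over $0 \neq u \in \spaceV$ preserves it, establishing the claim with the same constant $\alpha$. I do not anticipate a serious obstacle here: the only substantive ingredient is the Poincaré--Friedrichs inequality, whose validity rests solely on the boundedness of $\Omega$, and everything else is the standard observation that a symmetric coercive form automatically satisfies the inf-sup condition with its coercivity constant.
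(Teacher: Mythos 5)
Your proof is correct and follows exactly the route the paper intends: the paper's own proof is the one-line remark that the result ``follows from the definition of $A$ and Poincar\`e inequality,'' and your argument simply fills in the standard details (symmetry of the gradient form, coercivity via Poincar\'e--Friedrichs, then the inf-sup bound by taking $v=u$).
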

\begin{proof}
    The proof follows from the definition of $A$ and Poincarè inequality.
\end{proof}
\begin{theorem}[Infsup on $B$]
    \label{theo:infsup-B}
    The operator $B: \spaceV \mapsto \spaceQ'$ satisfies the infsup condition, i.e., there exists a positive real number $\beta_B > 0$ such that
    \begin{equation}
        \label{eq:insup-B}
        \infsup{0\neq q \in \spaceQ}{0\neq v \in \spaceV} \frac{\duality{Bv, q}}{\|u\|_\spaceV \|q\|_\spaceQ} \geq \beta_B > 0.
    \end{equation}
\end{theorem}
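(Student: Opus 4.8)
The plan is to recognize that $B$ is, up to the usual identification, the trace operator onto $\Gamma$, and that the claimed inf-sup condition is equivalent to the surjectivity of this trace together with the existence of a continuous right inverse. Indeed, since $\duality{Bv,q} = \duality{v,q}_{\Gamma}$ pairs the trace $v|_{\Gamma}\in\spaceQ'\equiv H^{1/2}(\Gamma)$ against $q\in\spaceQ\equiv H^{-1/2}(\Gamma)$, the operator $B$ coincides with the trace map $\gamma\colon\spaceV\to H^{1/2}(\Gamma)$, $\gamma v = v|_{\Gamma}$. Rewriting the inner supremum as a dual norm, the condition \eqref{eq:insup-B} is equivalent to the lower bound $\|B^T q\|_{\spaceV'}\ge\beta_B\|q\|_{\spaceQ}$ for all $q\in\spaceQ$, which by standard saddle-point theory holds precisely when $B$ maps $\spaceV$ onto $\spaceQ'$ with a bounded right inverse.

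Accordingly, the first substantive step I would take is to invoke the trace theorem in its less elementary direction: the existence of a bounded \emph{lifting} operator $E\colon H^{1/2}(\Gamma)\to\spaceV$ with $\gamma E = \mathrm{id}_{H^{1/2}(\Gamma)}$ and $\|E\mu\|_{\spaceV}\le C_E\|\mu\|_{\spaceQ'}$. This is the place where the geometry enters: the Lipschitz regularity of $\Gamma$ supplies the surjectivity and continuity of the lifting, while the hypothesis that the inclusions $V_i$ sit at a finite distance from $\partial\Omega$ guarantees that $\mu$ can be lifted to a function supported in a neighborhood of $V$ and vanishing on $\partial\Omega$, so that $E\mu$ indeed lands in $H^1_0(\Omega)$ and not merely in $H^1(\Omega)$.

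With $E$ available, the estimate is immediate by duality. Fix $0\neq q\in\spaceQ$. By definition of the dual norm on $\spaceQ=(\spaceQ')'$, for every $\varepsilon>0$ there is $\mu\in\spaceQ'$ with $\duality{\mu,q}_{\Gamma}\ge(1-\varepsilon)\|\mu\|_{\spaceQ'}\|q\|_{\spaceQ}$. Choosing $v=E\mu$ gives $Bv=\gamma E\mu=\mu$ together with $\|v\|_{\spaceV}\le C_E\|\mu\|_{\spaceQ'}$, whence
\begin{equation*}
\sup_{0\neq v\in\spaceV}\frac{\duality{Bv,q}}{\|v\|_{\spaceV}}
\;\ge\;\frac{\duality{\mu,q}_{\Gamma}}{C_E\|\mu\|_{\spaceQ'}}
\;\ge\;\frac{1-\varepsilon}{C_E}\,\|q\|_{\spaceQ}.
\end{equation*}
Letting $\varepsilon\to0$ and then taking the infimum over $0\neq q\in\spaceQ$ yields \eqref{eq:insup-B} with $\beta_B=1/C_E$.

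The main obstacle is confined to the second step, namely the construction of the continuous lifting $E$ into $H^1_0(\Omega)$: the algebraic surjectivity of the trace is classical, but controlling the lifting constant $C_E$ and simultaneously enforcing the homogeneous Dirichlet condition on $\partial\Omega$ is exactly what forces the geometric assumptions ($\Gamma$ sufficiently regular and $V\Subset\Omega$) and cannot be bypassed. Once $E$ is in hand, everything downstream is the one-line duality computation above.
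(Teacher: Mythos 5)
Your proposal is correct and follows essentially the same route as the paper: both identify $B$ with the trace operator onto $\Gamma$, invoke the trace theorem to obtain a bounded right inverse (lifting) into $\spaceV$, and convert this into the inf-sup condition via the dual-norm characterization of $\|\cdot\|_{\spaceQ}$. Your version is somewhat more explicit than the paper's --- you spell out the near-maximizing $\mu$ in the duality step and note why the lifting can be cut off to vanish on $\partial\Omega$ so that it lands in $H^1_0(\Omega)$ --- but these are refinements of the same argument, not a different one.
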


\begin{proof}
    The operator $B$ coincides with the trace operator. By the trace theorem~\cite{Mikhailov2013}, it is a bounded linear operator that coincides with the restriction operator for smooth functions (i.e., $B u = u|_{\Gamma}$ forall $u$ smooth), it admits a bounded right inverse, and therefore it satisfies, for some $\beta_0 > 0$, 
    \begin{equation}
        \label{eq:trace-inequalities}
       \begin{aligned}
           &\|B u\|_{\spaceQ'} \leq  \|B\|~ \|u\|_\spaceV & \forall u \in \spaceV \\
        & \|B u\|_{\spaceQ'} \geq  \beta_0 \|u\|_\spaceV \qquad &\forall u \in \spaceV \setminus \ker(B).
       \end{aligned}
    \end{equation}

By these conditions, it also follows that  the operator $B^T$ is linear, continuous and bounding. Moreover, $\ker(B^T) = \{0\}$.  Namely, there exist $0<\|B^T\|,\ \beta_{B} >0 $ such that
 \begin{align}
    \label{eq:B2a}
    \|B^Tq\|_{\spaceV'} &\leq \|B^T\| \|q\|_{\spaceQ},  &&\quad  \forall q \in \spaceQ
    \\
    \label{eq:B2b}
    \|B^Tq\|_{\spaceV'} &\geq \beta_{B} \|q\|_{\spaceQ}, &&\quad \forall q \in \spaceQ.
 \end{align}
Condition~\eqref{eq:B2b} is equivalent to~\eqref{eq:insup-B}, by the definition of the dual norm of $\spaceV'$, and by taking the infimum over $\spaceQ$.
\end{proof}

\begin{remark}
    For the forthcoming analysis, it is important to track the dependence of the
    constant $\beta_B$ on the size of slender inclusions $|V|$. Let us denote
    with $\epsilon$ the length of the smallest dimension of $V$. From simple
    scaling arguments, supported by results on the trace inequality
    \cite{MR1145843}, we note that the quantity $\|B\|$ is uniformly bounded
    with respect to $\epsilon$. Conversely,
    $\beta_0=\mathcal{O}(\epsilon^p)$ for some $p\geq 0$. We observe that
    $\|B^T\| \simeq \beta_0^{-1}$ and $\beta_B \simeq \|B\|^{-1}$. In
    conclusion, inequality \eqref{eq:insup-B} is robust with respect to the
    diameter of $V$.
\end{remark}

By Theorems~\ref{theo:infsup-A} and \ref{theo:infsup-B} it follows immediately that Problem~\ref{eq:model-problem-weak} is well posed and admits a unique solution $(u, \lambda)$ (see, e.g., \cite{Boffi2013}). By construction, the restriction of $u$ to  $\Omega\setminus V \subset \Omega$ satisfies problem~\eqref{eq:model-problem}.

\section{Reduced Lagrange multiplier formulation}\label{sec:reduced}

When the inclusion $V$ is slender, i.e., one or more of its characteristic dimensions are small compared to the measure of $\Omega$, it may be convenient to reformulate the problem on a subdomain of $V$ whose intrinsic dimension is smaller than $d$, i.e., a representative surface, curve, or point $\gamma \subseteq V$, see Figure \ref{fig:domain-bis}. We call $m$ the intrinsic dimension of $\gamma$, and we assume that $|V| = O(\epsilon^{d-m})$, where $\epsilon$ is the smallest characteristic dimension of $V$, and $\epsilon \ll |\Omega|$.

\begin{figure}[!h]
    \centering
    \includegraphics[page=1]{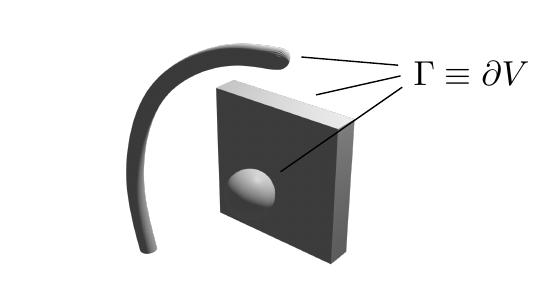}
    \hfill
    \includegraphics[page=2]{reduction}
    \caption{Example of dimensionality reduction from a general inclusion $V$ with boundary $\Gamma\equiv\gamma$ to a representative subset $\gamma$.}
    \label{fig:domain-bis}
\end{figure}

The main idea of such reformulation is rooted on the assumption that the relative measure of the domain $V$ w.r.t. to the measure of $\Omega$ allows one to accurately represent functions in $\spaceQ' = H^{1/2}(\Gamma\equiv\partial V)$ using a Sobolev space $\spaceW$ defined only on $\gamma$ (for example, $H^s(\gamma)^N$ for some $s \in [0,1]$ and for some $N\geq 1$). 

The abstract setting that enables us to perform such dimensionality reduction, is presented in the following assumption.

\begin{assumption}[Restriction operator]
    \label{ass:restriction}
    There exists a linear restriction operator $R:\spaceQ' \mapsto \spaceW'$ 
    and a positive constant $\beta_R > 0$ such that
    \begin{equation}
        \label{eq:infsup-R}
            \infsup{0\neq w \in \spaceW}{0\neq q' \in \spaceQ'} \frac{\duality{Rq', w}}{\|w\|_\spaceW \|q'\|_{\spaceQ'}} \geq \beta_R > 0.
    \end{equation}
\end{assumption}

Assumption \ref{ass:restriction} is equivalent to asking that the transposed operator $R^T: \spaceW \mapsto \spaceQ$ is a linear and bounding extension operator with trivial kernel, i.e., 
\begin{equation}
    \label{eq:RT-bounding}
    \| R^T w \|_{\spaceQ} \geq \beta_R \|w\|_{\spaceW} \qquad \forall w \in \spaceW.
\end{equation}

The operator $R^T$ defines a closed subspace $\spaceQR = \Im(R^T)  \subset \spaceQ$.  
The restriction of problem~\ref{eq:model-problem-weak} to $\spaceQR$ reads:

Given $g\in \spaceQ'$ , find $(u_R,\lambda_R)$ in $\spaceV\times \spaceQR$ such that 
\begin{equation}
    \label{eq:model-problem-restricted-weak-QR}
    \begin{aligned}
        & \duality{Au_R, v} + \duality{B^T \lambda_R, v} = \duality{f,v} && \forall v \in \spaceV\\
        &\duality{B u_R,q_R} = \duality{g,q_R} && \forall q_R \in \spaceQR.
    \end{aligned}
\end{equation}
or, equivalently:

Given $g\in \spaceQ'$ , find $(u_R,\Lambda)$ in $\spaceV\times \spaceW$ such that 
\begin{equation} 
    \label{eq:model-problem-restricted-weak-W}
    \begin{aligned}
        & \duality{Au_R, v} + \duality{B^T R^T\Lambda, v} = \duality{f,v} && \forall v \in \spaceV\\
        &\duality{R B u_R,w} = \duality{R g,w} && \forall w \in \spaceW,
    \end{aligned}
\end{equation}
where $\lambda_R = R^T \Lambda$. 
It is straightforward to see that $\lambda_R  = [\nabla u_R] \cdot n\, \ \textrm{in} \ \spaceQR$.

\begin{lemma}[Infsup on $RB$]
    \label{theo:infsup-RB}
    Under Assumption \ref{ass:restriction}, the operator $RB$ satisfies the infsup condition. More precisely, it holds
    \begin{equation}
        \label{eq:insup-BR}
        \infsup{0\neq w \in \spaceW}{0\neq v \in \spaceV} \frac{\duality{RBv, w}}{\|v\|_\spaceV \|w\|_\spaceW} \geq \beta_{R} \beta_{B}>0.
    \end{equation}
\end{lemma}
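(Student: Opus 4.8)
The plan is to rewrite the supremum over $v\in\spaceV$ as a dual norm and then to exploit the two bounding inequalities already at our disposal — inequality \eqref{eq:B2b} for $B^T$ and inequality \eqref{eq:RT-bounding} for $R^T$ — composed one after the other. The key observation is that, by the very definition of the transposed operators and the reflexivity of the Hilbert spaces involved, the numerator can be transferred entirely onto the $v$ slot: for every $w\in\spaceW$ and $v\in\spaceV$ one has
\[
    \duality{RBv, w} = \duality{Bv, R^T w} = \duality{B^T R^T w, v},
\]
where the first pairing is between $\spaceW'$ and $\spaceW$, the second between $\spaceQ'$ and $\spaceQ$ (using $R^T:\spaceW\mapsto\spaceQ$), and the third between $\spaceV'$ and $\spaceV$. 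This chain is legitimate precisely because $R^T w \in \spaceQ$, so that $B^T$ can indeed be applied to it.

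With this identity in hand, taking the supremum over $0\neq v\in\spaceV$ and recalling the definition of the dual norm on $\spaceV'$ gives, for each fixed $w$,
\[
    \sup_{0\neq v\in\spaceV} \frac{\duality{RBv, w}}{\|v\|_\spaceV} = \|B^T R^T w\|_{\spaceV'}.
\]
It then remains to bound this quantity from below by $\beta_R\beta_B\|w\|_\spaceW$. I would apply \eqref{eq:B2b} with the choice $q = R^T w \in \spaceQ$ to obtain $\|B^T R^T w\|_{\spaceV'} \geq \beta_B \|R^T w\|_\spaceQ$, and then invoke \eqref{eq:RT-bounding} to get $\|R^T w\|_\spaceQ \geq \beta_R \|w\|_\spaceW$. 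Chaining these two estimates yields $\|B^T R^T w\|_{\spaceV'} \geq \beta_B\beta_R\|w\|_\spaceW$; dividing by $\|w\|_\spaceW$ and taking the infimum over $0\neq w\in\spaceW$ delivers exactly \eqref{eq:insup-BR}.

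The argument is, in essence, transposition followed by the composition of two bounding maps, so there is no genuine analytical difficulty. The only point requiring care is the bookkeeping of the duality pairings and the identification of the adjoints under reflexivity, making sure that the intermediate object $R^T w$ lands in $\spaceQ$ — and not merely in some larger space — so that condition \eqref{eq:B2b} is applicable. This is where I expect to have to be most careful, since a mismatch in the pairing spaces is the only way the clean factorization into the product $\beta_R\beta_B$ could fail to emerge.
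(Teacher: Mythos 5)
Your proof is correct, but it follows a genuinely different route from the paper's. The paper argues on the primal side: it observes that $\ker(B)\subseteq\ker(RB)$ and chains
$\| R B u \|_{\spaceW'} \geq \beta_R \| B u \|_{\spaceQ'} \geq \beta_R \beta_B \|u \|_\spaceV$ for all $u \in \spaceV\setminus\ker(R B)$, i.e., it bounds the operator $RB$ itself from below off its kernel. You instead transpose the pairing, $\duality{RBv,w}=\duality{B^TR^Tw,v}$, and bound the adjoint $B^TR^T$ from below on all of $\spaceW$ by composing \eqref{eq:B2b} with \eqref{eq:RT-bounding}. Your route buys two things. First, it yields \eqref{eq:insup-BR} exactly in the form stated --- infimum over the multiplier space $\spaceW$, supremum over $\spaceV$, which is the Brezzi condition used later --- whereas the paper's chain, taken literally, establishes the transposed inf-sup (infimum over $\spaceV$ modulo $\ker(RB)$, supremum over $\spaceW$), and passing between the two requires injectivity and closed-range considerations that the paper leaves implicit. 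Second, your argument invokes only lower bounds that hold on the whole spaces: \eqref{eq:B2b} for $B^T$ and \eqref{eq:RT-bounding} for $R^T$. By contrast, the first inequality in the paper's chain applies a lower bound for $R$ to the element $Bu$, while Assumption~\ref{ass:restriction} furnishes a lower bound only for $R^T$; in general one can infer no more than $\|Rq'\|_{\spaceW'}\geq \beta_R\,\mathrm{dist}_{\spaceQ'}(q',\ker R)$, so that step is loose as written. Your transposition argument sidesteps both issues and is, if anything, the cleaner proof of the statement as formulated; the point you flagged as delicate --- that $R^Tw$ lands in $\spaceQ$ so that \eqref{eq:B2b} applies --- is indeed the only thing to check, and it holds by the paper's own definition of $R^T$.
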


\begin{proof}
    The proof of \eqref{eq:insup-BR} follows immediately from Theorem~\ref{theo:infsup-B} and Assumption~\ref{ass:restriction}, observing that $\ker(B) \subseteq \ker(RB)$ and that:
    \begin{equation*}
        \| R B u \|_{\spaceW'} \geq \beta_R \| B u \|_{\spaceQ'} \geq \beta_R \beta_B \|u \|_\spaceV \qquad \forall u \in \spaceV\setminus\ker(R B).
    \end{equation*}
    As a result the desired inequality holds true.
\end{proof}

Standard results for mixed approximations (see, for example,~\cite{Boffi2013} Theorem 4.2.3, under the additional assumption that the operator $A$ is self-adjoint) allows one to conclude that there exists a unique solution $(u_R,\lambda_R) = (u_R, R^T\Lambda)$ to Problem~\eqref{eq:model-problem-restricted-weak-QR} (or, equivalently, to Problem~\eqref{eq:model-problem-restricted-weak-W}). Moreover, the solution of the problem is bounded by:

\begin{equation}\label{eq:stability}
    \begin{split}
     &\|u_R\|_{\spaceV} \leq \frac{1}{\alpha} \|f\|_{\spaceV'} + \frac{2\|A\|^\frac12}{\alpha^\frac12 \beta_{R}\beta_{B}} \|Rg\|_{\spaceW'}\,,
    \\
    &\|\Lambda\|_{\spaceW} \leq  \frac{2\|A\|^\frac12}{\alpha^\frac12 \beta_{R}\beta_{B}} \|f\|_{\spaceV'} + \frac{\|A\|}{\alpha (\beta_{R}\beta_{B})^2}\|Rg\|_{\spaceW'}\,. 
    \end{split}
\end{equation}

\subsection{Extension to Robin type transmission conditions}\label{sec:robin}

We now generalize problem \eqref{eq:model-problem} to embrace the case of Robin-type transmission conditions as follows,
\begin{subequations}
    \label{eq:model-problem-robin}
    \begin{align}
        \label{eq:model-problem-robin-a}
         -\Delta u &= f && \text{ in } \Omega\setminus V\\
        \label{eq:model-problem-robin-b}
         u &= 0 && \text{ on } \partial \Omega \\
        \label{eq:model-problem-robin-c}
        \kappa [\nabla u] \cdot n &=(u-g) && \text{ on } \Gamma 
    \end{align}
\end{subequations}
We notice that this problem embraces the one previously addressed. Precisely, it coincides with \eqref{eq:model-problem} for $\kappa \rightarrow 0$. In the case $\kappa \rightarrow \infty$ it represents Neumann-type transmission conditions.

The weak formulation of such problem, with enforcement of the transmission equation on $\Gamma$ by means of Lagrange multipliers reads as follows: find $u\in \spaceV \equiv H^1_0(\Omega),\,\lambda \in \spaceQ' \equiv H^{-\frac12}(\Gamma)$ such that
\begin{subequations}
    \label{eq:model-problem-weak-robin}
    \begin{align}
            \label{eq:model-problem-robin-weak-a}
            &(\nabla u, \nabla v)_{\Omega} + \duality{\lambda, v}_{\Gamma} = (f,v)_{\Omega},\ \forall v\in H^1_0(\Omega), && \forall v \in H^1_0(\Omega)
            \\
            \label{eq:model-problem-bobin-weak-b}
            &\duality{u,q}_{\Gamma} - \kappa\duality{\lambda,q}_{\Gamma} = \duality{g,q}_{\Gamma}, && \forall q\in H^{-\frac12}(\Gamma)\,.
    \end{align}
\end{subequations}

Problem \eqref{eq:model-problem-weak-robin} can be represented in operator form as the following perturbed saddle point problem: 
given $F\in\spaceV',\,G\in\spaceQ'$ find $u\in\spaceV,\,\lambda\in\spaceQ$ such that
\begin{subequations}\label{eq:model-problem-weak-robin-abstract}
      \begin{align}
        &\duality{Au,v} + \duality{B^T \lambda, v} = \duality{F,v} && \forall v \in \spaceV,
        \\
        &\duality{Bu,q} - \duality{C\lambda,q} = \duality{G,q} && \forall q \in \spaceQ,
    \end{align}  
\end{subequations}
where the operators $A$ and $B$ are defined as for \eqref{eq:model-problem-weak} in \eqref{eq:defA}, \eqref{eq:defB} and the operator $C$ takes the form,
\begin{equation}\label{eq:defC}
    C: \spaceQ \mapsto \spaceQ,
    \ \mathrm{with} \ \duality{C\lambda,q} = \kappa \duality{\lambda, q}_{\Gamma}.
\end{equation}

If $A$ and $B$ are continuous operators, $A$ is elliptic, $B$ satisfies the infsup condition of Theorem \ref{theo:infsup-B}, and $C$ is of the form \eqref{eq:defC}, 
then, owing to Theorem 4.3.2 of \cite{Boffi2013},
problem \eqref{eq:model-problem-weak-robin} admits a unique solution $u \in \spaceV,\,\lambda \in \spaceQ$ such that
\begin{align}
    \label{eq:stab-robin-1}
    \|u\|_{\spaceV} &\leq \frac{\beta_B^2+4\kappa\|A\|}{\alpha \beta^2}\|f\|_{\spaceV'}
    +\frac{2\|A\|^\frac12}{\alpha^\frac12\beta_B}\|g\|_{\spaceQ'},
    \\
    \label{eq:stab-robin-2}
    \|\lambda\|_{\spaceQ} &\leq \frac{2\|A\|^\frac12}{\alpha^\frac12\beta_B}\|f\|_{\spaceV'}
    +\frac{4\|A\|}{\kappa\|A\|+2\beta_B^2}\|g\|_{\spaceQ'}.
\end{align}

Similarly to the case of Dirichlet transmission conditions, we apply the restriction operator to problem \eqref{eq:model-problem-weak-robin}. Using the notation defined before, we obtain the following abstract problem: given $F\in\spaceV',\,G\in\spaceW'$ find $u_R\in\spaceV,\,\Lambda\in\spaceW$ such that
\begin{subequations}\label{eq:model-problem-weak-robin-reduced-abstract}
       \begin{align}
        &\duality{Au_R,v} + \duality{B^T R^T \Lambda, v} = \duality{F,v} && \forall v \in \spaceV,
        \\
        &\duality{RBu_R,w} - \duality{RCR^T\Lambda,w} = \duality{G,w} && \forall w \in \spaceW.
    \end{align}   
\end{subequations}
We note that in the particular case where $C$ is the identity, as in \eqref{eq:defC}, the perturbation term becomes $\kappa\duality{R^T\Lambda,R^Tw}$.
The operators $A$, $RB$ and $RCR^T$ satisfy the assumptions of Theorem 4.3.2 of \cite{Boffi2013}.
Then, the solution of the reduced problem $u_R,\Lambda$ also enjoys stability estimates analogous to \eqref{eq:stab-robin-1}-\eqref{eq:stab-robin-2}.

\section{Dimensionality reduction error}
\label{sec:dimensionality-reduction-error}

Let us now define and analyze the error due to the dimensionality reduction
induced by the operator $R$, namely $e_u = u-u_R,\ e_\lambda  = \lambda - R^T
\Lambda$. Subtracting equation \eqref{eq:model-problem-restricted-weak-W} from
\eqref{eq:model-problem-weak-abstract} we obtain,
\begin{equation}
    \label{eq:model-error-1}
    \begin{aligned}
        & \duality{A(u-u_R), v} + \duality{B^T (\lambda-R^T\Lambda), v} = 0 && \forall v \in \spaceV\\
        &\duality{B u,q} - \duality{B u_R, R^Tw}= \duality{g,q-R^Tw} && \forall q \in \spaceQ,\ w \in \spaceW.
    \end{aligned}
\end{equation}
For any $q_R \in \spaceQR$ we have the following orthogonality property:
\begin{equation*}
    \duality{B (u-u_R),q_R} = 0 \quad \forall q_R \in \spaceQR.
\end{equation*}
Since $\spaceQR$ is a closed subspace of $\spaceQ$, the space of orthogonal functions to $\spaceQR$, named $\spaceQ^\perp$, is such that $\spaceQ = \spaceQR \oplus \spaceQ^\perp$. Using the orthogonality property \eqref{eq:model-error-1} can be written as follows,
\begin{equation}
    \label{eq:model-error-2}
    \begin{aligned}
        & \duality{A(u-u_R), v} + \duality{B^T (\lambda-R^T\Lambda), v} = 0 && \forall v \in \spaceV\\
        &\duality{B (u - u_R),q^\perp} + \duality{B u_R, q^\perp}= \duality{g,q^\perp} && \forall q^\perp \in \spaceQ^\perp.
    \end{aligned}
\end{equation}
Problem \eqref{eq:model-error-2} becomes: find $e_u \in \spaceV$, $e_\lambda \in Q^\perp$ such that
\begin{equation}
    \label{eq:model-error-3}
    \begin{aligned}
        & \duality{A e_u, v} + \duality{B^T e_\lambda, v} = 0 && \forall v \in \spaceV\\
        &\duality{B e_u,q^\perp} = \duality{g-Bu_R,q^\perp} && \forall q^\perp \in \spaceQ^\perp.
    \end{aligned}
\end{equation}
Then, observing that the infsup stability of $B$ shown in Theorem \ref{theo:infsup-B} is satisfied by the pair of spaces $\spaceV,\,\spaceQ^\perp$, exploiting \cite[Theorem 4.2.3]{Boffi2013} applied to problem \eqref{eq:model-problem-weak-abstract} we obtain the following bounds:
\begin{equation}
    \label{eq:residual-reduced-continuous-new}
    \begin{split}
        &\|e_u\|_{\spaceV} \leq \frac{2\|A\|^\frac12}{\alpha^\frac12\beta_B}\|g-Bu_R\|_{\spaceQ'}\\
        &\|e_\lambda\|_{\spaceQ} \leq \frac{\|A\|}{(\beta_B)^2}\|g-Bu_R\|_{\spaceQ'},
    \end{split}
\end{equation}
where $\alpha$ is the coercivity constant of $A$, and $\beta_B$ is the inf-sup constant of $B$ \eqref{eq:insup-B}.
We note that all the constants of \eqref{eq:residual-reduced-continuous-new} are independent of $R$,
but they may depend on the size of $\Gamma$ through $\beta_B$.
The dimensionality reduction error and the influence of $R$ only appears through $\|g-Bu_R\|_{\spaceQ'}$. Estimates \eqref{eq:residual-reduced-continuous-new} can be regarded as a bound of the dimensionality reduction error with respect to the residual obtained by enforcing the boundary constraint $Bu=g$ on the closed subspace $\spaceQR$. Notice, however, that it is not enough to guarantee that $g \in \spaceQR'$ in order to eliminate the dimensionality reduction error, since the solution $u_R$ of the reduced problem \eqref{eq:model-problem-restricted-weak-W} does not necessarily coincide with the full order solution $u$, due to possible errors in the representation of the Lagrange multiplier $\lambda$ in the reduced dimensionality setting.

To obtain an estimate that considers also this error, we start form the following
equations, obtained again from a combination of \eqref{eq:model-problem-restricted-weak-W} and
\eqref{eq:model-problem-weak-abstract}:
\begin{equation*}
    \begin{split}
            \nonumber
        &\duality{A u_R,v} + \duality{B^TR^T\Lambda,v} 
        = \duality{A u,v} + \duality{B^T \lambda,v},\ \forall v\in \spaceV\,,
        \\
        &\duality{B u_R, R^T w} = \duality{Bu,R^T w},\  \forall w \in \spaceW\,,
    \end{split}
\end{equation*}
that is, for any $w \in \spaceW$,
\begin{equation*}
    \begin{split}
            &\duality{A (u_R -u ),v} + \duality{B^TR^T(\Lambda - w),v} 
        =\duality{B^T (\lambda - R^T w),v},\ \forall v\in \spaceV\,,
        \\
        &\duality{R B (u_R-u), w} = 0,\ \forall w \in \spaceW\,.
    \end{split}
\end{equation*}
Thanks to the stability of problem \eqref{eq:model-problem-restricted-weak-W}, namely inequalities \eqref{eq:stability}, we have
\begin{align}
    &\|u_R - u\|_{\spaceV} \leq \frac{1}{\alpha} \|B^T(\lambda - R^T w)\|_{\spaceV'}
    \\
    &\|\Lambda - w\|_{\spaceW} \leq \frac{2\|A\|^\frac12}{\alpha^\frac12 \beta_{R}\beta_{B}} \|B^T(\lambda - R^T w)\|_{\spaceV'}.
\end{align}

Finally, exploiting the continuity of $B^T$ and of $R^T$, using the triangle inequality and recalling that $w$ is a generic function in $\spaceW$, we obtain,
\begin{equation}
    \label{eq:error-reduced-continuous-new}
    \begin{split}
        &\|e_u\|_{\spaceV} \leq \frac{\|B^T\|}{\alpha} \inf\limits_{w \in \spaceW}\|\lambda - R^T w\|_{\spaceQ}\,,
        \\
        &\|e_\lambda\|_{\spaceQ} \leq \left(1+\frac{2\|A\|^\frac12\|B^T\|\|R^T\|}{\alpha^\frac12 \beta_{R}\beta_{B}}\right) \inf\limits_{w \in \spaceW} \|\lambda - R^T w\|_{\spaceQ}\,.
    \end{split}
\end{equation}
We note that, in contrast to \eqref{eq:residual-reduced-continuous-new}, the constants of \eqref{eq:error-reduced-continuous-new} depend on the properties of $R$.

We proceed similarly for the case of Robin transmission conditions.
By subtracting problem \eqref{eq:model-problem-weak-robin-reduced-abstract} from \eqref{eq:model-problem-weak-robin-abstract} and exploiting the representation $\spaceQ = \spaceQ^R \oplus \spaceQ^\perp$, we obtain that
\begin{align*}
    &\duality{A(u-u_R),v} + \duality{B^T(\lambda-R^T\Lambda),v} = 0, && \forall v \in \spaceV,
    \\
    &\duality{Bu,q} - \duality{Bu_R,R^Tw} - \kappa\duality{\lambda,q} + \kappa\duality{R^T\Lambda,R^Tw}
    = \duality{g,q-R^Tw}, && \forall q \in \spaceQ.
\end{align*}
From the latter equation, choosing $q \in \spaceQ^R$ we obtain the following relation,
\begin{equation*}
    \duality{B(u-u_R),q^R} - \kappa \duality{\lambda-R^T\Lambda,q^R} = 0, \ \forall q^R \in \spaceQ^R.
\end{equation*}
Furthermore, observing that for $\Lambda \in \spaceW$ we have $\duality{R^T\Lambda,q^\perp}=0$,
we obtain that the dimensionality reduction error related to the Robin-type transmission conditions satisfy the following problem: find $e_u \in \spaceV$, $e_\lambda \in \spaceQ^\perp$ such that
\begin{subequations}
       \begin{align*}
           &\duality{Ae_u,v} + \duality{B^Te_\lambda,v} =0, && \forall v \in \spaceV,
           \\
           &\duality{B e_u, q^\perp} - \duality{e_\lambda,q^\perp} = \duality{g-B u_R, q^\perp} && \forall v \in \spaceQ^\perp,
       \end{align*}
\end{subequations}
which shares the same structure of \eqref{eq:model-problem-weak-robin-abstract},
(with the exception that the second equation is tested on $\spaceQ^\perp$,
but the infsup stability of $B$ holds true also in this subspace)
and consequently shares the same stability property that is
\begin{align}
    \label{eq:stab-robin-err-1}
    \|e_u\|_{\spaceV} &\leq \frac{2\|A\|^\frac12}{\alpha^\frac12\beta_B}\|g-Bu_R\|_{\spaceQ'},
    \\
    \label{eq:stab-robin-err-2}
    \|e_\lambda\|_{\spaceQ} &\leq \frac{4\|A\|}{\kappa\|A\|+2\beta_B^2}\|g-Bu_R\|_{\spaceQ'}.
\end{align}

To conclude this section, we anticipate that we will use both \eqref{eq:residual-reduced-continuous-new} and \eqref{eq:error-reduced-continuous-new} to derive a priori estimates of the dimensionality reduction error with respect to $\epsilon$. However, we would like to warn the reader that the two results are not equivalently suited for this purpose. The residual type estimates do not require additional regularity to the functions on the right hand side. Conversely, the approximation type estimates leverage on the additional regularity of $\lambda$, that may not be guaranteed, to exploit optimal convergence with respect to $N$.

\section{Weighted restriction and extension operators}
\label{sec:isomorphism}

Let $\gamma \subseteq V$ be the lower dimensional representative domain
for $V$ (possibly the union of disjoint connected components). We assume that both $V$ and $\gamma$ are Lipschitz, and define a geometrical projection operator 
\begin{equation}
    \label{eq:definition-proj}
    \begin{aligned}
    & \proj :& \Gamma := \partial V &\mapsto &&\gamma \\
    & \proj^{-1}:& \gamma &\mapsto &&\partitionspace(\Gamma)
    \end{aligned}
\end{equation}
that maps uniquely each point on $\partial V$ to one point on the lower
dimensional $\gamma$.
In \eqref{eq:definition-proj}, we indicate with $\partitionspace(\Gamma)$ the
power set of $\Gamma$ (i.e., the set of all possible subsets of $\Gamma$), and
with $\proj^{-1}$ the preimage of $\proj$.  
For $s \in \gamma$, we will also indicate with $|\proj^{-1}(s)|$ the intrinsic
Hausdorff measure of the set $\proj^{-1}(s)$.
In particular the Hausdorff measure $\diff{\mathcal {H}(\Pi^{-1})}$ is such that
$|\proj^{-1}(s)| := \int_{\proj^{-1}(s)} \diff{\mathcal {H}(\Pi^{-1})}$ for all
$s \in \gamma$. 

We observe that, in principle, for different points $s \in \gamma$, the set
$\proj^{-1}(s)$ could have different intrinsic dimensionality (i.e., it could be
a curve, a surface, or a point). 
We will focus on the situation where the Hausdorff dimensionality of
$\proj^{-1}(s)$ is the same for all $s \in \gamma$. To simplify the notation, we
define
$$
D(s) := \proj^{-1}(s),\qquad \diff{D(s)} := \diff{\mathcal {H}(\Pi^{-1}(s))},
$$ 
and we assume that $|\proj^{-1}(s)| > 0$ and that $|\proj^{-1}(s)|$ is bounded
for almost all $s \in \gamma$, i.e., 
\begin{equation}
      0 < D_m\leq |D(s)|\leq D_M < \infty \qquad \forall s \in \gamma.
    \label{eq:assumption-measure}
\end{equation}

When no confusion can arise, we will also omit
from the notation the dependence of the
measure $\diff{D}$ and of the set $D$ on $s$.

Under the above assumptions, Fubini's theorem implies that the integral of any
absolutely integrable function $f$ over $\Gamma$ can be decomposed as
\begin{equation}
    \int_\Gamma f \diff{\Gamma} = 
    \int_\gamma \int_{D} f \diff{D} \diff{\gamma}.
\end{equation}

The projection $\Pi$ induces naturally, though $\Pi^{-1}$, an average operator for absolutely integrable functions $f$ on $\Gamma$ defined, for
each $s \in \gamma$, as the average of $f$ over the preimage $D(s)$:
\begin{equation}
    (\avg_0 f)(s) := \frac{1}{|D(s)|}
    \int_{D(s)} f \diff{D}(s) =: \left(\fint_{D} f \diff{D}\right)(s), \quad s \in \gamma.
    \label{eq:averageoperator}
\end{equation}

\begin{remark}[Examples]
    Two notable examples of projection operators are those induced by the extreme choices $\gamma = x_0 \in V$ (a single point) or $\gamma \equiv \Gamma$ (the
    full surface $\Gamma$). In the first case, all points on $\Gamma$ are projected
    to a single point $x_0$, and $\proj^{-1}(x_0) \equiv \Gamma$, leading to $\avg_0$
    being the classical average on $\Gamma$. In the second case, instead,
    $\proj^{-1}(s) = \{s\}$  for all $s \in \Gamma$, and the Hausdorff measure is
    the Dirac measure associated with $\Gamma$ at the point $s \in \Gamma$,
    i.e., $\avg_0 f$ is simply the pointwise evaluation of $f$.
\end{remark}

A natural extension operator from $\gamma$ to the whole surface $\Gamma$ can be
defined through the projection operator $\proj$, i.e., 
\begin{equation}
    (\ext_0 w)(x) := (w\circ \proj)(x), 
    \label{eq:extensionoperator}
\end{equation}
for any smooth $w$ on $\gamma$.
Clearly, the extension operator $\ext_0$ is the right inverse of the
average $\avg_0$:
\begin{equation}\label{eq:AE=I}
\avg_0 \ext_0 w = w, \qquad \forall w \in C^0(\overline{\gamma}),
\end{equation}
since the function $\ext_0 w$ associates to each set $D(s) =
\proj^{-1}(s)\subseteq \Gamma$ the constant value $w(s)$, whose average on
$D(s)$ coincides with $w(s)$.

These operators can be generalized to their \emph{weighted} counterparts by
defining
\begin{equation}
    \begin{split}
        (\ext_i w)(x) &:= \varphi_i(x) (w\circ \proj)(x) \\
        (\avg_i f)(s) &:= \fint_{D(s)}\varphi_i f \diff{D}(s),
    \end{split}
    \label{eq:weighted-extension-and-average}
\end{equation}
for a given choice of orthogonal weight functions $\varphi_i \in
H^1(\Gamma)\cap C^0(\Gamma)$ such that $\varphi_0 \equiv 1$ (generating the
definitions of $\avg_0$ and $\ext_0$ above) and such that
\begin{equation}
        (\avg_i\ext_j w)(x) := c_{i}\delta_{ij} w(x), \qquad \forall i,j \in [0,N), \ \forall w \in C^0(\overline{\gamma}),
    \label{eq:orthonormality}
\end{equation}
where $\delta_{ij}$ is the Kronecker delta, and $c_i$ are positive numbers. We
now work out some sufficient conditions that allow one to extend the operators
above to the Sobolev spaces $H^s(\Gamma)$ and $H^s(\gamma)$, respectively, for
$s \in [-1,1]$.

To simplify the exposition, we assume that $V$ is a single, simply connected, and non self-intersecting inclusion, and we assume that
\begin{assumption}[Isomorphism of $\hat{V}$]
    \label{ass:isomorphism}
    $V$ can be written as the image of an isomorphism 
    \begin{equation}
        \label{eq_Phi_transform}
        \Phi \colon \hat{V} \rightarrow V,
    \end{equation}
    where $\hat{V}$ is a reference domain with unit measure. We assume, moreover, that $\Phi$ satisfies the following hypotheses:
    \begin{enumerate}[i)]
        \item $\Phi \in C^{1}(\overline{\hat{V}})$, $\Phi^{-1}\in C^{1}(\overline{V})$;
        \item $0 < J_m \leq \det(\hat\nabla \Phi(\hat{x})) \leq J_M < \infty \qquad \forall \hat{x} \in \overline{\hat{V}}$;
        \item  $\refgamma$ is the pre-image of the $m$-th dimensional
        representative domain $\gamma$, i.e., $\refgamma := \Phi^{-1}(\gamma)$,
        and we assume that $\refgamma$ is a tensor product box containing the
        origin, aligned with the the last axes of the coordinates $\hat{x}$.
    \end{enumerate}
    
    The last hypothesis indicates that $\refgamma$ is a straight line directed
    along the $x_d$-axis for the cases where the dimension $m$ of $\refgamma$ is
    one, an axis aligned rectangle in the $\widehat{x_d}\times\widehat{x_{d-1}}$
    plane for the cases where the dimension $m$ of $\refgamma$ is two, and so
    on. Since $\refgamma$ contains the origin, any point $\hat{x} \in
    \hat{V}$ whose first $d-m$ components are zero belongs to $\hat{\gamma}$.
    Moreover, the inclusion domain $\hat{V}$ can be written as a tensor product
    domain of the form $\widehat{D} \times \refgamma$, and for each $\hat{s}\in \refgamma$, $\widehat{D}(\hat{s}) \equiv \widehat{D}$ is constant.
\end{assumption}
    
The tensor product structure of $\refGamma$ deriving from
Assumption~\ref{ass:isomorphism} allows one to define a reference projection
operator onto $\refgamma$ by the orthogonal projection on the last $m$ axes in
the reference coordinates $\hat{x}$ using the iso-morphism $\Phi$, i.e.:
\begin{equation}
    \label{eq:def-projection-Gamma}
   \begin{split}
    \refproj: & \hat{V} \mapsto \refgamma \\
        & \hat{x} \mapsto \sum_{i=d-m+1}^d (\hat{e}_i\otimes \hat{e}_i)\hat{x}\\
        \proj: & V \mapsto \gamma \\
        & x \mapsto \Phi( \refproj(\Phi^{-1}(x))).
   \end{split}
\end{equation}

For the reference extension and average operators
\begin{equation}
    \begin{split}
        \refext_0 \hat{w} &:= \hat{w}\circ \refproj\\
        \refavg_0 \hat{q} &:= \fint_{\widehat{D}} \hat{q} \diff{\widehat{D}},
    \end{split}
    \label{eq:reference-extension-and-average}
\end{equation}
it is possible to show that 
there exist two positive constants $\hat{C}_0^{\avg}$ and $\hat{C}_0^{\ext}$ such that
\begin{equation}
    \label{eq:reference-operators-inequalities}
    \begin{aligned}
        \| \refext_0 \hat{w} \|_{s, \refGamma} &\leq \hat{C}_0^{\ext} \|\hat{w}\|_{s, \refgamma} \qquad \forall \hat{w}\in H^s(\refgamma)\\
        \| \refavg_0 \hat{q} \|_{s, \refgamma} &\leq \hat{C}_0^{\avg} \|\hat{q}\|_{s, \refGamma} \qquad \forall \hat{q}\in H^s(\refGamma),
    \end{aligned}
\end{equation}
for $s=0$ and $s=1$, owing to the tensor product structure of $\hat{V}$. The result follows from an argument similar to 
\cite[Lemma 2.1 and Corollary 2.2]{Kuchta2021558}.

Similarly, one could pick a set of $N$ reference weight functions and derive
more general estimates for the corresponding weighted operators:

\begin{lemma}[regularity of reference weighted operators]
    \label{lem:regularity-reference-weighted-operators}
    Given a set of $N$ reference weight functions $\{\hat\varphi_i\}_{i=0}^{N}
    \in (C^0(\overline{\refGamma})\cap H^1(\refGamma))^{N*1}$, then the reference
    weighted operators
    \begin{equation}
        \label{eq:reference-weighted-operators}
        \begin{aligned}
            &\refext_i &\colon H^s(\refgamma) &\to &&H^s(\refGamma) \\
                 &  & \hat{w} &\mapsto &&\hat{\varphi}_i \hat{w}\circ \refproj,\\
                 \\
             &\refavg_i &\colon H^s(\refGamma) &\to &&H^s(\refgamma) \\
                  &  & \hat{q} &\mapsto &&\fint_{\hat{D}} \hat{q} \hat{\varphi}_i \diff{\hat{D}},
        \end{aligned}
    \end{equation}
    are bounded and linear operators for any $s\in [-1,1]$, i.e., there exist
    constants $\hat{C}_{i,s}^{\refavg}$ and $\hat{C}_{i,s}^{\refext}$ such that:
    \begin{equation}
        \label{eq:reference-weighted-operators-inequalities}
        \begin{aligned}
            \| \refext_i \hat{w} \|_{s, \refGamma} &\leq \hat{C}_{i,s}^{\refext} \|\hat{w}\|_{s, \refgamma} \qquad \forall i \in [0,N), \qquad \forall \hat{w}\in H^s(\refgamma),\\
            \| \refavg_i \hat{q} \|_{s, \refgamma} &\leq \hat{C}_{i,s}^{\refavg} \|\hat{q}\|_{s, \refGamma} \qquad \forall i \in [0,N), \qquad \forall \hat{q}\in H^s(\refGamma).
        \end{aligned}
    \end{equation}
\end{lemma}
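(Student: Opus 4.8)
The plan is to reduce the weighted estimates \eqref{eq:reference-weighted-operators-inequalities} to the unweighted case \eqref{eq:reference-operators-inequalities} for $s=0$ and $s=1$, and then recover the full range $s\in[-1,1]$ by interpolation and duality. First I would treat the endpoint $s=0$ directly: since $L^2$ is an algebra up to the sup norm of the multiplier, and since $\hat\varphi_i \in C^0(\overline{\refGamma})$ is bounded, the multiplication map $\hat q \mapsto \hat\varphi_i \hat q$ is bounded on $L^2(\refGamma)$ with constant $\|\hat\varphi_i\|_{\infty,\refGamma}$. Composing with the unweighted operators then gives, for the extension, $\|\refext_i \hat w\|_{0,\refGamma} = \|\hat\varphi_i\,(\hat w\circ\refproj)\|_{0,\refGamma}\leq \|\hat\varphi_i\|_{\infty,\refGamma}\,\hat C_0^{\refext}\,\|\hat w\|_{0,\refgamma}$, and analogously for $\refavg_i$, where the fiber integral against $\hat\varphi_i$ is controlled by the unweighted average composed with multiplication. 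This yields the $s=0$ constants.

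Next I would establish the $s=1$ estimate. The key point is that $\hat\varphi_i\in H^1(\refGamma)\cap C^0(\overline{\refGamma})$, so differentiating the product $\hat\varphi_i\,(\hat w\circ\refproj)$ by the Leibniz rule produces one term with $\hat\nabla\hat\varphi_i$ times the (bounded) extension and one term with $\hat\varphi_i$ times the gradient of the extension. Because $\refproj$ is the orthogonal projection onto the last $m$ reference axes, the composition $\hat w\circ\refproj$ has gradient supported only in the $\refgamma$-directions, and the unweighted bound \eqref{eq:reference-operators-inequalities} with $s=1$ controls it. The product-rule term involving $\hat\nabla\hat\varphi_i$ is bounded in $L^2$ by $\|\hat\nabla\hat\varphi_i\|_{0,\refGamma}$ times the sup norm of $\hat w\circ\refproj$; here I would use the embedding/trace control of $\hat w$ so that this term is absorbed into a constant times $\|\hat w\|_{1,\refgamma}$. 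The average operator $\refavg_i$ is handled the same way, differentiating under the fiber integral sign and using that $\widehat D$ is a fixed reference box (constant fibers), so the order of integration and differentiation commutes; the term hitting $\hat\varphi_i$ is again controlled by its $H^1$ norm. This gives the $s=1$ constants $\hat C_{i,1}^{\refext}$, $\hat C_{i,1}^{\refavg}$.

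The remaining values $s\in(0,1)$ follow by real (or complex) interpolation between the $s=0$ and $s=1$ bounds, since both $\refext_i$ and $\refavg_i$ are the \emph{same} linear operators viewed on the interpolation scale $H^s=[L^2,H^1]_s$, and bounded linear maps between the endpoints interpolate with the expected norm. For the negative range $s\in[-1,0)$ I would argue by duality: the weighted average and extension are, up to the weight, formal adjoints of one another with respect to the $\Gamma$- and $\gamma$-pairings (this is exactly the structure encoded by the orthonormality relation \eqref{eq:orthonormality}), so boundedness of $\refavg_i$ on $H^{-s}$ is equivalent to boundedness of $\refext_i$ on $H^{s}$ and vice versa, transferring the positive-range estimates to the negative range. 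Finally I would interpolate once more across $s=0$ to cover all of $[-1,1]$.

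The main obstacle is the $s=1$ product-rule term where the derivative lands on the weight $\hat\varphi_i$: one must bound $\hat\nabla\hat\varphi_i\,(\hat w\circ\refproj)$ in $L^2(\refGamma)$ by a constant times $\|\hat w\|_{1,\refgamma}$, which in general requires an $L^\infty$ or trace-type control of $\hat w$ rather than merely its $H^1$ norm when $\refgamma$ is low-dimensional. The clean way around this is precisely the tensor-product structure of $\refGamma=\widehat D\times\refgamma$ from Assumption~\ref{ass:isomorphism}: since $\hat w\circ\refproj$ is constant along each fiber $\widehat D$, its $L^2(\refGamma)$ norm is exactly $|\widehat D|^{1/2}\|\hat w\|_{0,\refgamma}$, so no genuine $L^\infty$ bound on $\hat w$ is needed and the weight contributes only through $\|\hat\nabla\hat\varphi_i\|_{0,\refGamma}$, keeping all constants finite and dependent only on $\|\hat\varphi_i\|_{1,\refGamma}\cap\|\hat\varphi_i\|_{\infty,\refGamma}$ and the unweighted constants $\hat C_0^{\refext}$, $\hat C_0^{\refavg}$.
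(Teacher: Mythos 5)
Your proposal is correct and follows essentially the same route as the paper's own proof: direct $s=0$ and $s=1$ estimates via the Leibniz rule, with the tensor-product structure of $\refGamma = \widehat{D}\times\refgamma$ neutralizing the term where the gradient falls on the weight (exactly the obstacle you identify), followed by density and real interpolation for $s\in(0,1)$ and duality to reach $s\in[-1,0)$. The one slip is attributing the adjointness of $\refavg_i$ and $\refext_i$ to the orthonormality relation \eqref{eq:orthonormality}, which concerns the composition $\avg_i\ext_j$; the duality step actually rests on the Fubini identity $(\hat{w},\refavg_i\hat{q})_{\refgamma}=\bigl(\refext_i\bigl(\hat{w}/|\widehat{D}|\bigr),\hat{q}\bigr)_{\refGamma}$ (the paper's \eqref{eq:reference-weighted-operators-identity}), which identifies $\refavg_i$ as the transpose of $\refext_i$ up to the constant factor $1/|\widehat{D}|$.
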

\begin{proof}
    We begin by observing that, for a continous $\hat{w} \in C^0(\overline{\refgamma})$, we have 
    \begin{equation}
        \begin{split}
            \left\|\refext_i\left(\hat{w}\right)\right\|^2_{0,\refGamma} = 
            & \int_\refGamma \left(\hat{\varphi}_i \left(\hat{w}\circ\refproj\right) \right)^2 \diff{\refGamma} \\
            \leq &\|\hat{\varphi}_i\|^2_{0,\refGamma}  \ \|\hat{w} \circ \refproj \|_{0,\refGamma}^2\\  
            \leq & \|\hat{\varphi}_i\|^2_{0,\refGamma} \hat{D}^2  \| \hat{w} \|_{0,\gamma}^2, 
        \end{split}
        \label{eq:reference-l2-norm-extension}
    \end{equation}
    owing to the tensor product structure of $\refGamma=\hat{D}\times\refgamma$.

    Similarly, for  $w\in C^1(\refgamma)$, we have that 
    \begin{equation}
        \begin{split}
            |\refext_i \hat{w}|^2_{1,\refGamma} = 
            & \int_\refGamma \left( \hat{\nabla}_\refGamma \left( \hat{\varphi}_i \hat{w}\circ\refproj\right) \right)^2 \diff{\refGamma} \\
            \leq & |\hat{\varphi}_i|^2_{1,\refGamma}\|\hat{w} \circ \refproj \|_{0,\refGamma}^2 + \|\hat{\varphi}_i\|^2_{0,\refGamma} | \hat{w} \circ \refproj |_{1,\refGamma}^2\\   
            \leq & |\hat{\varphi}_i|^2_{1,\refGamma} |\hat{D}|^2 \|\hat{w} \|_{0,\refgamma}^2 + \|\hat{\varphi}_i\|^2_{0,\refGamma} |\hat{D}|^2 | \hat{w} |_{1,\refgamma}^2\\
            \leq & C \|\hat{\varphi}_i\|^2_{1,\refGamma}\|\hat{w} \|_{1,\refgamma}^2.
        \end{split}
        \label{eq:reference-h1-seminorm-extension}
    \end{equation}

    By a density argument, and interpolating the estimates above using the real
    method, we obtain
    \begin{equation}
        \|\refext_i \hat{w} \|_{s,\refGamma} \leq \hat{C}_{i,s}^{\refext} \|\hat{w}\|_{s, \refgamma}, \qquad \forall \hat{w} \in H^s(\refgamma),\qquad s\in[0,1],\qquad i \in [0,N),
        \label{eq:reference-extension-estimate}
    \end{equation}
    where the constants $\hat{C}_{i,s}^{\refext}$ depend on $i$, $\hat{\varphi}_i$, and $s$. This implies that $\refext_i$ is a bounded operator from $H^s(\refgamma)$ to $H^s(\refGamma)$ for all $s\in[0,1]$ and $i\in[0,N)$.

    We now observe that the following identity holds:
    \begin{multline}
        (\hat{w}, \refavg_i \hat{q})_{\refgamma} = \int_\refgamma \hat{w} \hat{\varphi}_i\left( \fint_{\hat{D}} \hat{q} \diff{\hat{D}} \right) \diff{\refgamma} = \int_\refgamma \hat{w} \hat{\varphi}_i\left(\frac{1}{|\hat{D}|} \int_{\hat{D}} \hat{q} \diff{\hat{D}} \right) \diff{\refgamma} \\
        = \int_\refGamma \refext_i\left(\frac{\hat{w}}{|\hat{D}|}\right) \hat{q} \diff{\refGamma} = \left(\refext_i\left(\frac{\hat{w}}{|\hat{D}|}\right) , \hat{q}\right)_{\refgamma},
        \label{eq:reference-weighted-operators-identity}
    \end{multline}
    and we conclude that $\refavg_i$ can be identified with the transpose of
    $\refext_i$ applied to $w/|\hat{D}|$, i.e., $\refavg_i$ is a bounded linear operator
    from $H^{-s}(\refGamma)$ to $H^{-s}(\refgamma)$ for the same $s \in [0,1]$ above
    by replacing the $L^2$ scalar product in the identification
    \eqref{eq:reference-weighted-operators-identity} with a duality pairing.

    The proof for $\refavg_i$ in the case $s=1$ follows a similar line:
    \begin{equation*}
        \begin{split}
            | \refavg_i \hat{q} |^2_{1, \refgamma} = &\int_\refgamma \left(\hat{\nabla}_\refgamma  \fint_{\hat{D}} \hat{\varphi}_i  \hat{q} \diff{\hat{D}} \right)^2\diff\refgamma \\
            \leq & \int_\refgamma \int_{\hat{D}} \left(\hat{\nabla}_\refGamma \left(\frac1{|\hat{D}|}  \hat{\varphi}_i  \hat{q}  \right)\right)^2\diff{\hat{D}} \diff\refgamma \\
            \leq  & C \|\hat{\varphi}_i\|^2_{1,\refGamma}\|\hat{q}\|^2_{1,\refGamma} \qquad \forall i \in [0,N), \qquad \forall \hat{q}\in H^1(\refGamma).
        \end{split}
    \end{equation*}

    By a density argument, and interpolating the estimate for $s=0$ and $s=1$ using the real method, we conclude that $\refavg_i$ is a bounded linear operator from $H^{s}(\refGamma)$ to $H^{s}(\refgamma)$ for all $s\in[0,1]$ and $i\in[0,N)$:
    \begin{equation}
        \|\refavg_i \hat{q} \|_{s,\refgamma} \leq C_{i,s}^{\refavg} \|\hat{q}\|_{s, \refGamma}, \qquad \forall \hat{q} \in H^s(\refGamma),\qquad s\in[0,1],\qquad i \in [0,N),
        \label{eq:reference-average-estimate}
    \end{equation}
    and again we use the identification \eqref{eq:reference-weighted-operators-identity}
    to conclude that $\refext_i$ is therefore also bounded and linear from
    $H^{-s}(\refgamma)$ to $H^{-s}(\refGamma)$ for all $s\in[0,1]$ and $i\in[0,N)$.
\end{proof}

Under Assumption~\ref{ass:isomorphism}, we can now consider the weighted
operators $\avg_i$ and $\ext_i$ arising from the projection operator induced by
$\Phi$, i.e., $\proj := \Phi\circ\refproj\circ\Phi^{-1}$ and weighted by
$\varphi_i := \hat{\varphi}_i\circ\Phi^{-1}$. Notice that, for any $q\in [0,1]$
and for $\hat A\subseteq \overline{\widehat{V}}$ and $A = \Phi(A)$, we have that
the following generalized scaling arguments hold:
    
\begin{equation}
    \label{eq:scaling}
    \begin{aligned}
        \| w \circ \Phi \|_{q, \hat A} &\leq |\Phi|^{q}_{1,\infty,\hat{V}} ~J_m^{-\frac12} ~\|w\|_{q, A}&\forall  w \in H^q(A), \\
        \| w \|_{q, A} &\leq |\Phi^{-1}|^{q}_{1,\infty,V}~ J_M^{\frac12} ~\|w \circ \Phi\|_{q, \hat A}  &\forall  w \in H^q(A).
    \end{aligned}
\end{equation}

Such arguments are common in the literature of high order finite element
methods, see, for example, ~\cite[Section 3.3]{Kawecki2020}, and can be
obtained, for a non-integer $q$ in $[0,1],$ by interpolating the estimates for
$q=0$ and $q=1$ using the real method. By using the push forward of the
reference basis $\hat{\varphi}_i$, we can now ensure similar regularity
properties also for $\avg_i$ and $\ext_i$:

\begin{theorem}[regularity of weighted operators]
    \label{theo:regularity-weighted-operators}
    Given a set of $N$ weight functions $\{\varphi_i\}_{i=0}^{N} \in
    (C^0(\overline{\Gamma})\cap H^1(\Gamma))^{N+1}$ and provided that
    Assumption~\ref{ass:isomorphism} holds, then the weighted operators
    \begin{equation}
        \label{eq:weighted-operators}
        \begin{aligned}
             &\avg_i &\colon H^s(\Gamma) &\to &&H^s(\gamma) \\
                  &  & w &\mapsto &&\fint_{D} w \varphi_i \diff{D},\\
                    \\
             &\ext_i &\colon H^s(\gamma) &\to &&H^s(\Gamma) \\
                  &  & f &\mapsto &&\varphi_i f\circ \proj,
        \end{aligned}
    \end{equation}
    are bounded and linear operators for any $s\in [-1,1]$, i.e., there exist
    constants $C_{i,s}^{\avg}$ and $C_{i,s}^{\ext}$ such that:
    \begin{equation}
        \label{eq:weighted-operators-inequalities}
        \begin{aligned}
            \| \avg_i q \|_{s, \gamma} &\leq C_{i,s}^{\avg} \|q\|_{s, \Gamma} \qquad \forall i \in [0,N), \qquad \forall q\in H^s(\Gamma)\\
            \| \ext_i w \|_{s, \Gamma} &\leq C_{i,s}^{\ext} \|w\|_{s, \gamma} \qquad \forall i \in [0,N), \qquad \forall w\in H^s(\gamma).
        \end{aligned}
    \end{equation}
    
\end{theorem}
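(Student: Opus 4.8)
The plan is to transfer the boundedness of the reference operators established in Lemma~\ref{lem:regularity-reference-weighted-operators} to the physical ones by conjugating with the isomorphism $\Phi$, and then to control the change of variables by means of the scaling estimates~\eqref{eq:scaling}. First I would record the elementary identity linking the physical and reference extension operators. Since $\varphi_i = \hat\varphi_i\circ\Phi^{-1}$ and $\proj = \Phi\circ\refproj\circ\Phi^{-1}$, a direct substitution gives, for any $w$ on $\gamma$,
\begin{equation*}
    \ext_i w = \bigl(\refext_i(w\circ\Phi)\bigr)\circ\Phi^{-1},
\end{equation*}
so that $\ext_i$ factors as a pullback by $\Phi$ (from $\gamma$ to $\refgamma$), followed by the reference extension $\refext_i$, followed by a pushforward by $\Phi^{-1}$ (from $\refGamma$ to $\Gamma$).

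For $s\in[0,1]$ I would then bound each of the three factors between the corresponding $H^s$ spaces. The two outer maps are controlled by~\eqref{eq:scaling}: the pullback $w\mapsto w\circ\Phi$ is bounded from $H^s(\gamma)$ into $H^s(\refgamma)$ with constant $|\Phi|^s_{1,\infty,\hat V}\,J_m^{-1/2}$, and the pushforward $\hat f\mapsto\hat f\circ\Phi^{-1}$ is bounded from $H^s(\refGamma)$ into $H^s(\Gamma)$ with constant $|\Phi^{-1}|^s_{1,\infty,V}\,J_M^{1/2}$, while the middle factor is bounded by the reference Lemma. Composing the three yields the claimed estimate for $\ext_i$, with $C_{i,s}^{\ext}$ expressible through $\hat C_{i,s}^{\refext}$ and the geometric constants $J_m,J_M,|\Phi|_{1,\infty},|\Phi^{-1}|_{1,\infty}$. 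The same scheme applies to $\avg_i$ once the physical fiber average is rewritten in reference coordinates; here the fiber measure $\diff{D}$ differs from $\diff{\widehat D}$ by a surface Jacobian $j$, and $|D(s)| = \int_{\widehat D} j\,\diff{\widehat D}$, but since $j$ is bounded between positive constants (by $J_m\le\det(\hat\nabla\Phi)\le J_M$ and $\Phi\in C^1$), the physical average is comparable to $(\refavg_i(q\circ\Phi))\circ\Phi^{-1}$ and the identical composition argument delivers the bound for $s\in[0,1]$.

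To reach $s\in[-1,0]$ I would argue by duality, exactly as in the proof of Lemma~\ref{lem:regularity-reference-weighted-operators}. Using the Fubini decomposition $\int_\Gamma f\,\diff{\Gamma} = \int_\gamma\int_D f\,\diff{D}\,\diff{\gamma}$, one obtains an identity of the form $(w,\avg_i q)_\gamma = (\ext_i(w/|D|),q)_\Gamma$ that identifies $\avg_i$, up to the bounded weight $1/|D|$, with the transpose of $\ext_i$. Replacing the $L^2$ pairings by the $H^s$--$H^{-s}$ duality pairing then promotes the positive-order bound on $\ext_i$ to a negative-order bound on $\avg_i$, and symmetrically the bound on $\avg_i$ to one on $\ext_i$; together with the positive-order estimates this gives boundedness of both operators for every $s\in[-1,1]$. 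The main obstacle is the bookkeeping for $\avg_i$: the non-constant fiber Jacobian $j$ and the normalizing factor $1/|D(s)|$ must be tracked through both the scaling and the duality steps, and one must check that $\varphi_i=\hat\varphi_i\circ\Phi^{-1}$ inherits the $C^0(\overline\Gamma)\cap H^1(\Gamma)$ regularity from $\hat\varphi_i$, which follows from the $C^1$ regularity of $\Phi^{-1}$ via the chain rule. Once these geometric factors are seen to be uniformly bounded above and below, the entire difficulty is confined to the reference configuration and has already been resolved by Lemma~\ref{lem:regularity-reference-weighted-operators}.
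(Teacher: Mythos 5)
Your proposal is correct and is essentially the paper's own argument: the paper's proof consists of exactly this combination of Lemma~\ref{lem:regularity-reference-weighted-operators} with the scaling estimates~\eqref{eq:scaling}, yielding the same constant bounds $C_{i,s}^{\ext},C_{i,s}^{\avg}\leq \hat{C}_{i,s}^{\refext/\refavg}\,J_M^{1/2}J_m^{-1/2}|\Phi|^s_{1,\infty}|\Phi^{-1}|^s_{1,\infty}$ that your three-factor composition (pullback by $\Phi$, reference operator, pushforward by $\Phi^{-1}$) produces. The additional details you supply --- the factorization identity, the fiber-Jacobian bookkeeping for $\avg_i$, and performing the duality step at the physical level so that the scaling estimates are only ever invoked for $s\in[0,1]$ --- are precisely the steps the paper leaves implicit.
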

\begin{proof}
    The proof follows from a combination of
    Lemma~\ref{lem:regularity-reference-weighted-operators} and the generalized
    scaling arguments~\eqref{eq:scaling-properties}, where the resulting
    constants can be shown to be bounded by 
    \begin{equation}
        \label{eq:constants-regularity-weighted-operators}
        \begin{aligned}
            C_{i,s}^{\ext} &\leq \hat{C}_{i,s}^{\refext}J_M^{\frac12} J_m^{-\frac12} 
            |\Phi|^s_{1,\infty,\refGamma} |\Phi^{-1}|^s_{1,\infty,\Gamma} \\
            C_{i,s}^{\avg} &\leq \hat{C}_{i,s}^{\refavg}J_M^{\frac12} J_m^{-\frac12} 
            |\Phi|^s_{1,\infty,\refGamma} |\Phi^{-1}|^s_{1,\infty,\Gamma}.
        \end{aligned}
    \end{equation}
\end{proof}

Under these rather general assumptions, and taking weight functions $\varphi_i$
that are orthogonal and such that the average and extension operators
satisfy the scaling property:
    \begin{equation}
        \label{eq:scaling-properties}
        \begin{aligned}
            \avg_i\ext_j w = c_i \delta_{ij} w,
        \end{aligned}
    \end{equation}
with $c_i$ positive constants, we can construct modal average and extension
operators that group together the individual average and extension operators,
i.e., we define
\begin{equation}
    \label{eq:definition-operator-R-transpose}
    \begin{aligned}
        R^T: & H^s(\gamma)^{N+1} &\mapsto &H^s(\Gamma) \\
            & w &\mapsto & \sum_{i=0}^{N}\ext_i w_i,
    \end{aligned}
\end{equation}
and
\begin{equation}
    \label{eq:definition-operator-P}
    \begin{aligned}
        P: & H^s(\Gamma)  &\mapsto & H^s(\gamma)^{N+1}\\
            & q &\mapsto & \{ c_i^{-1} \avg_i q \}_{i=0}^{N},
    \end{aligned}
\end{equation}
with the property that $P$ is a left inverse for $R^T$ by construction ($P
R^T=I_{H^s(\gamma)^{N+1}}$), and $(R^T P)^2=R^T P$ is a projection operator from
$H^s(\Gamma)$ to $H^s(\Gamma)$, that only retains $N$ modes of $q \in H^s(\Gamma)$ (the projection of $q$ onto the $\varphi_i$ basis on $\Gamma$). 
Let us consider the space $\spaceWN$ defined below, where the particular case $\spaceWZ$ coincides with $\spaceW$,
\begin{equation}
    \label{eq:spaceW}
    \spaceWN := (H^{\frac12}(\gamma))^{N+1}, \qquad \|w\|^2_{\spaceWN} := \sum_{i=0}^{N} \|w_i\|^2_{\frac12, \gamma},
\end{equation}
then $R^T$ satisfies the inf-sup condition, i.e.,  Assumption~\ref{ass:restriction}:

\begin{theorem}[Modal extension operator]
    \label{theo:reference-extension}
    Under the same assumptions of
    Theorem~\ref{theo:regularity-weighted-operators}, the extension operator
    $R^T: \spaceW \mapsto \spaceQ'$ defined in
    \eqref{eq:definition-operator-R-transpose}   
    satisfies the infsup condition~\eqref{eq:infsup-R} for $N \geq 0$, that is, there
    exists a positive constant $\beta_R$ such that, for any $w \in \spaceW$, we
    have
    \begin{equation}
        \label{eq:infsup-RT}
        \|R^T w \|_{\spaceQ} \geq \beta_R \|w\|_{\spaceW}.
    \end{equation}
\end{theorem}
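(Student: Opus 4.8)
The plan is to read the bounding estimate \eqref{eq:infsup-RT} as the statement that $R^T$ possesses a bounded left inverse, and to exhibit that inverse explicitly: it is the modal operator $P$ assembled in \eqref{eq:definition-operator-P}. Indeed, if $P$ is continuous and $P R^T = I_{\spaceW}$, then for every $w \in \spaceW$
\[
\|w\|_{\spaceW} = \|P R^T w\|_{\spaceW} \le \|P\|\,\|R^T w\|_{\spaceQ'},
\]
so the bounding estimate holds with $\beta_R = \|P\|^{-1} > 0$; by the equivalence between \eqref{eq:infsup-R} and \eqref{eq:RT-bounding} recorded after Assumption~\ref{ass:restriction}, this is exactly the claimed infsup condition on $R$. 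The whole proof thus reduces to two verifications: the algebraic left-inverse identity, and the continuity of $P$ on the relevant Sobolev scale.

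First I would establish $P R^T = I_{\spaceW}$, which follows directly from the biorthogonality \eqref{eq:scaling-properties}. For $w = \{w_j\}_{j=0}^{N} \in \spaceW$, the $i$-th component of $P R^T w$ is
\[
(P R^T w)_i = c_i^{-1}\,\avg_i\Big(\sum_{j=0}^{N}\ext_j w_j\Big) = c_i^{-1}\sum_{j=0}^{N}\avg_i\ext_j w_j = c_i^{-1}\sum_{j=0}^{N} c_i\,\delta_{ij}\,w_j = w_i,
\]
using linearity of $\avg_i$ and the relation $\avg_i\ext_j = c_i\delta_{ij}$, so $P$ is a genuine left inverse on all of $\spaceW$. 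Next I would bound $\|P\|$: each block of $P$ equals $c_i^{-1}\avg_i$, and Theorem~\ref{theo:regularity-weighted-operators} provides $\|\avg_i q\|_{s,\gamma} \le C_{i,s}^{\avg}\|q\|_{s,\Gamma}$ for all $s \in [-1,1]$. Taking $s = \tfrac12$ (so that $\spaceQ' = H^{\frac12}(\Gamma)$ and $\spaceW = (H^{\frac12}(\gamma))^{N+1}$ are the spaces in play) and summing over the $N+1$ modes in the product norm of $\spaceW$ yields
\[
\|P q\|_{\spaceW}^2 = \sum_{i=0}^{N} c_i^{-2}\,\|\avg_i q\|_{\frac12,\gamma}^2 \le \Big(\sum_{i=0}^{N} c_i^{-2}\,(C_{i,\frac12}^{\avg})^2\Big)\,\|q\|_{\frac12,\Gamma}^2,
\]
whence $\|P\|^2 \le \sum_{i=0}^{N} c_i^{-2}(C_{i,\frac12}^{\avg})^2 =: C_P^2 < \infty$ and $\beta_R := C_P^{-1}$ is positive.

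The argument is short once these two ingredients are isolated, so the only genuine point of care is the bookkeeping of the Sobolev exponent: one must invoke Theorem~\ref{theo:regularity-weighted-operators} at precisely the value of $s$ at which $R^T$ maps $\spaceW$ into the target space appearing on the left of \eqref{eq:infsup-RT}, and check that the averages $\avg_i$ are continuous at that same $s$. Since the regularity estimates hold on the whole interval $s \in [-1,1]$, both $R^T$ (through the $\ext_i$) and $P$ (through the $\avg_i$) are continuous at the relevant exponent, so no difficulty arises there. I would finally observe, in anticipation of the robustness analysis, that the $\beta_R$ obtained this way depends on $N$ through $C_P$ and on $\Phi$ through the constants of Theorem~\ref{theo:regularity-weighted-operators}; the statement asserts only positivity of $\beta_R$ for each fixed $N \ge 0$, which is exactly what a bounded left inverse guarantees.
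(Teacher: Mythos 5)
Your proof takes the same route as the paper's: the paper's entire argument is that $P$ from \eqref{eq:definition-operator-P} is a bounded linear left inverse of $R^T$, and that possessing a bounded left inverse is equivalent to the bounding condition \eqref{eq:RT-bounding}. Your explicit verification of $PR^T=I_{\spaceW}$ through the biorthogonality $\avg_i\ext_j=c_i\delta_{ij}$ spells out what the paper dismisses as ``by construction'', and your inequality $\|w\|_{\spaceW}=\|PR^Tw\|_{\spaceW}\le\|P\|\,\|R^Tw\|$ is precisely the mechanism the paper has in mind.

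The one real divergence sits exactly at the point you yourself flagged as the ``bookkeeping of the Sobolev exponent'', and there your choice differs from the paper's in a way that matters. The paper invokes Theorem~\ref{theo:regularity-weighted-operators} at $s=-\tfrac12$, so that $P$ is bounded on $\spaceQ=H^{-1/2}(\Gamma)$ and the left-inverse argument yields a lower bound on $\|R^Tw\|_{\spaceQ}$ --- which is what the display \eqref{eq:infsup-RT} states, what \eqref{eq:RT-bounding} requires, and what is actually consumed downstream: in Lemma~\ref{theo:infsup-RB} and in the stability of the reduced problem, the estimate is chained with \eqref{eq:B2b}, $\|B^Tq\|_{\spaceV'}\ge\beta_B\|q\|_{\spaceQ}$, which lives in the $\spaceQ$-norm. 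You instead take $s=+\tfrac12$ and obtain a lower bound on $\|R^Tw\|_{\spaceQ'}=\|R^Tw\|_{\frac12,\Gamma}$. Since $\|\cdot\|_{-\frac12,\Gamma}\lesssim\|\cdot\|_{\frac12,\Gamma}$ and not conversely, a lower bound on the stronger norm does not imply the stated lower bound on the weaker one, and it cannot be composed with \eqref{eq:B2b}; so, as written, your argument does not establish \eqref{eq:infsup-RT}. The repair is mechanical --- rerun your computation at $s=-\tfrac12$, which Theorem~\ref{theo:regularity-weighted-operators} also covers, and you reproduce the paper's proof verbatim --- but note the cost: the right-hand side then carries the $(H^{-1/2}(\gamma))^{N+1}$-norm of $w$ rather than the $(H^{1/2}(\gamma))^{N+1}$-norm of \eqref{eq:spaceW}. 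That mismatch is an inconsistency of the paper itself (its prose announces $R^T\colon\spaceW\to\spaceQ'$ while its display and its proof work in $\spaceQ$); indeed, with the $H^{1/2}$-product norm on $\spaceW$ the displayed inequality is unattainable: for $N=0$ and $w$ oscillating at frequency $k$ along $\gamma$, $\|\ext_0 w\|_{-\frac12,\Gamma}$ decays like $k^{-1/2}$ while $\|w\|_{\frac12,\gamma}$ grows like $k^{1/2}$. So the $s=-\tfrac12$ reading is the only one under which the theorem, and the paper's two-line proof, are coherent, and it is the one you should adopt.
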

\begin{proof}
    The operator $R^T$ posseses the left inverse $P$, defined in
    \eqref{eq:definition-operator-P}, which is bounded and linear owing to
    Theorem~\ref{theo:regularity-weighted-operators} for $s=-1/2$. Existence of
    a bounded left inverse of $R^T$ is equivalent to the \emph{inf-sup}
    condition \eqref{eq:infsup-RT}.
\end{proof}

\section{The Fourier extension operator for 1D-3D coupling}\label{sec:fourier}

As a concrete example of a general extension operator, we consider the case where the domain $V$ is isomorphic to a cylinder with unit measure through the mapping $\Phi$, and we set $\refgamma$ to be the reference cylinder centerline, which we assume to be aligned with the axis $\hat{e}_3$, i.e., $\refgamma = \{0\}\times\{0\}\times[0,1]$. We denote with $\widehat{(\cdot)}$ the functions and space coordinates on the reference domain $\hat{V}$, and we define a projection to the centerline $\gamma$ through $\Phi$ and the orthogonal projection on the $\hat{e}_3$-axis in the following way:
\begin{equation}
    \label{eq:def-projection-Gamma-cylinder}
   \begin{split}
    \refproj: & \hat{V} \mapsto \refgamma \\
        & \hat{x} \mapsto (\hat{e}_3\otimes \hat{e}_3)\hat{x}\\
        \proj: & V \mapsto \gamma \\
        & x \mapsto \Phi( \refproj(\Phi^{-1}(x))).
   \end{split}
\end{equation}

With this geometric structure in mind, we define a set of harmonic basis
functions that satisfy the assumptions of
Lemma~\ref{lem:regularity-reference-weighted-operators} by building them on the
reference domain $\hat{V}$, i.e., we set $\hat{\varphi}_i$ to be such that
\begin{equation}
    \label{eq:fourier-extension}
    \hat{\varphi_i}(\hat{x}) = \hat{\varphi}_i(\hat{x}_1, \hat{x}_2)
\end{equation}
where the functions $\hat{\varphi}_i$ are harmonic, constant along the $\hat{e}_3$ direction, and form an orthogonal set of basis functions in $L^2(\hat{B})$,
where $\hat{B}$ is the unit ball in $\mathbb{R}^{d-1}$ that represents the cross section of the unit cylinder $\hat{V}$. 

We use Fourier modes and cylindrical harmonics to define 
$\hat{\varphi}_0(\hat{x}_1, \hat{x}_2)=1$ and $\hat{\varphi}_i^*(\hat{x}_1, \hat{x}_2)$, i.e., for $1 \leq i \leq n$ and $*=c,s$, we set (in cylinder coordinates)
\begin{equation}
\label{eq:fourier-modes}
\hat{\varphi}_{i}^c(\hat{\rho}, \hat{\theta}) = \hat{\rho}^i \cos(i\hat{\theta}),
\quad
\hat{\varphi}_{i}^s(\hat{\rho}, \hat{\theta}) =  \hat{\rho}^i \sin(i\hat{\theta}),
\end{equation}
where $\hat{x}_1 = \hat{\rho}\cos(\hat{\theta})$, $\hat{x}_2 = \hat{\rho}\sin(\hat{\theta})$.
Then, with a little abuse of notation we obtain the following weighted projectors,
\begin{equation*}
    \refavg_0 \hat q =\fint_{\partial\hat{B}} \hat q \diff{\partial\hat{B}},
    \quad
    \refavg_i^* \hat q =\fint_{\partial\hat{B}} \hat q \hat{\varphi}_{i}^* \diff{\partial\hat{B}},
    \quad *=c,s,
    \quad \forall \hat q \in H^s(\hat \Gamma)\,.
\end{equation*}
The definitions of these weighted average operators on the physical domain follow similarly and will be used later on.

Using the Fourier modes up to the order $n$, the total number of modes become $N=2n+1$.
We note that for $n=N-1=0$ the extension operator $R^T$  extends a function $w$ defined on $\gamma$ on the entire domain $V$, in a constant way on iso-surfaces of $\Phi$ at constant ${\hat{x}_3}$. Its left inverse operator $P$ takes the average of a function on sections of $\Gamma$ and uses that value to construct a function on $\spaceW$. For $n,N>1$, the passage from $\gamma$ to $\Gamma$  entails the projection of higher order moments, using more than one degree of freedom on each cross section of the cylinder.

\subsection{An example: a 1D fiber embedded into a 3D domain}\label{sec:fiber}

We consider a narrow fiber, $V$, embedded into the domain $\Omega$. Assuming that the fiber cross sectional radius, named $\epsilon$, is small with respect to the characteristic size of the whole domain (comparable to the unit value), we aim to analyze how the dimensionality reduction error, namely $e_u = u-u_R, \ e_\lambda = \lambda - R^T\Lambda$, scales with respect to the radius of the fiber.
{\color{black} The analysis presented here is an extension to the 3D-1D case of the one developed for the Poisson problem with small holes in \cite{boulakia:hal-03501521}.}

For simplicity, let us consider a rectilinear fiber $V=\{\mathbf{x}\in \mathbb{R}^3: x_1=\rho\cos\theta,\ x_2=\rho\sin\theta,\ x_3=z, \ (\rho,\theta,z) \in (0,\epsilon) \times [0,2\pi] \times (0,L)\}$ isomorphic to the unit cylinder $\hat V$ through the transformation $\Phi: \rho = \epsilon \hat \rho,\ \theta = \hat \theta,\ z = L \hat z$. It is straightforward to see that $\mathrm{det}(\hat \nabla \Phi (\hat x))= J_m = J_M = \epsilon L$ and that $|\Phi|_{1,\infty,\refGamma}=\epsilon L, \ |\Phi^{-1}|_{1,\infty,\Gamma} = (\epsilon L)^{-1}$. We notice that in this case the constant $|\Phi|^{-s}_{1,\infty,\refGamma} J_m^{\frac12} |\Phi^{-1}|^{-s}_{1,\infty,\Gamma} J_M^{-\frac12}$ for $s=1/2$ does not depend on $\epsilon$ and $L$.

In the previous sections, the dimensionality reduction error has been bounded in two possible ways: in \eqref{eq:residual-reduced-continuous-new} on the basis of the residual obtained by projecting the boundary constraint on $\spaceQR$; in \eqref{eq:error-reduced-continuous-new} using the approximation error of $\spaceQR$ with respect to $\spaceQ$. In what follows we address both approaches for this particular case.

\subsubsection{Model error bound based on the residual}\label{sec:error-analysis-res}
Adopting the first approach, we analyze how $\|g-Bu_R\|_{\spaceQ'}$ scales with respect to $\epsilon$. Here we analyze the main mechanism that governs the decay of the residual when the radius of the fiber shrinks. Let $\tilde V \supset V$ be a fiber of radius $\delta > \epsilon$ and let be $\partial \tilde B \supset \partial B$ the cross sections of $\tilde V$ and $V$ respectively. As both cylinders have constant cross sections, we omit to denote the axial coordinate at which the cross section is evaluated.
Let $v \in H^1_0(\Omega)$ be the weak solution of $-\Delta v = f$ in $\Omega$ with $v=g$ on $\partial\Omega$, with $f\in L^2(\Omega)$, $g \in H^\frac12(\partial \Omega)$. It is known that $\|v\|_{1,\Omega} \leq C \left(\|f\|_{0,\Omega}+ \|g\|_{\frac12,\partial\Omega}\right)$. Let us finally set the technical assumption $\mathrm{supp}(f) \cap \tilde V = \emptyset$, as a result of which $v$ is harmonic on $\tilde V$ and we can represent $v$ as follows,
\begin{equation}\label{eq:harmonic}
    v(\rho,\theta,x_3) = \avg_0 v + \sum_{i=1}^\infty 
    \left(\frac{\rho}{\delta}\right)^i [(\avg_i^c v)|_{\partial \tilde B}(x_3) \cos (i\theta) 
    + (\avg_i^s v)|_{\partial \tilde B}(x_3) \sin (i\theta)]
\end{equation}
where $(\avg_i^* v)|_{\partial \tilde B}(x_3)=\int_{\partial \tilde B} v \varphi_i^*(\delta,\theta) \delta d\theta$ and $\varphi_i^*$ are defined in \eqref{eq:fourier-modes}.
We note that the projector $\avg_i^*$ with $*=c,s$ is related to $R^T P v$ previously defined.
In particular, with the exception of constant scaling factors, $R^T P v$ coincides with $\sum_{i=0}^N\ext_i^*\avg_i^*$ on $\partial \tilde B$.
Taking the same representation with respect to $V$ and comparing term by term, we obtain,
\begin{equation*}
    (\avg_i^* v)|_{\partial B}(x_3) = \left(\frac{\epsilon}{\delta}\right)^i (\avg_i^* v)|_{\partial \tilde B}(x_3).
\end{equation*}
Furthermore we have,
\begin{equation}\label{eq:fourier-bound-tilde}
    \|\avg_i^* v|_{\partial \tilde B}\|_{0,\gamma} \leq \|v\|_{0,\partial \tilde V} \leq C \|v\|_{1,\Omega} \leq C \left(\|f\|_{0,\Omega}+ \|g\|_{\frac12,\partial\Omega}\right),
\end{equation}
with constants independent of $\epsilon$ and thus we conclude that for any $0<\epsilon<\delta$ we have
\begin{equation}\label{eq:fourier-bound}
    \|\avg_i^* v|_{\partial B}\|_{0,\gamma} \leq C \left(\frac{\epsilon}{\delta}\right)^i 
    \left(\|f\|_{0,\Omega}+ \|g\|_{\frac12,\partial\Omega}\right).
\end{equation}

In the analysis, we neglect for simplicity the error arising for the projection of the right hand side $g$. More precisely, we assume that $g \in \spaceQR$, as a result $\avg^*_i g |_{i=n+1}^\infty =0$, $*=c,s$. Owing to the second of \eqref{eq:model-problem-restricted-weak-QR} we obtain that $\avg^*_i (Bu_R-g) |_{i=n+1}^\infty =\avg^*_i Bu_R |_{i=n+1}^\infty$. Observing that $\spaceQR$ and $\spaceQ^\perp$ are orthogonal spaces, and reminding that $g\in \spaceQR$ and $(g-Bu_R) \in \spaceQ^\perp$, we obtain that $\|g - Bu_R\|_{\spaceQ'}=\|Bu_R^\perp\|_{\spaceQ'}$, where $Bu_R^\perp$ is the projection of $Bu_R$ onto $\spaceQ^\perp$.

The next step is to show that the coefficients of the Fourier expansion of $u_R^\perp|_{\Gamma}$ satisfy the following property:
\begin{align}
    \nonumber
    &u_R^\perp|_{\Gamma} = \\
    \label{eq:harmonic-lemma-1}
    &\quad \sum_{i=n+1}^\infty \left(\frac{\epsilon}{\delta}\right)^i [(\avg_i^c u_R)|_{\partial \tilde B}(x_3) \cos (i\theta) + (\avg_i^s u_R)|_{\partial \tilde B}(x_3) \sin (i\theta)],
    \\
    \label{eq:harmonic-lemma-2}
    &\|(\avg_i^c u_R)|_{\partial \tilde B}(x_3)\|_{0,\gamma},\,
    \|(\avg_i^c u_R)|_{\partial \tilde B}(x_3)\|_{0,\gamma}
    \leq C \left(\|f\|_{0,\Omega}+ \|g\|_{\frac12,\partial\Omega}\right),
\end{align}

We first observe that $u_R \in H^1_0(\Omega)$ is harmonic in $V$, then we have 
\begin{equation*}
u_R^\perp(\rho, \theta, z) =  \displaystyle \sum_{n = n+1}^{\infty} \rho^{i}[A_i(x_3) \cos(i\theta) + B_i(x_3) \sin(i\theta)],
\end{equation*}
and it is also harmonic in the annulus $\tilde V \setminus V$ such that
\begin{multline*}
u_R^\perp(\rho, \theta, z) =  \displaystyle \sum_{i = n+1}^{\infty} \{[C_i(x_3) \rho^{i} + D_i(x_3)(\rho^{-i})] \cos(i\theta) \\
+ [E_i(x_3) \rho^{i} + F_i(x_3)(\rho^{-i})]\sin(i\theta)\}.
\end{multline*}
By applying the matching conditions on the jump of $u_R$ and its gradients across $\Gamma$ we obtain the following constraints on the coefficients $A_i,B_i,C_i,D_i,E_i,F_i$:
\begin{equation*}
\begin{cases}
A_i \epsilon^{i} = C_i \epsilon^{i} + D_i \epsilon^{-i} = 0, & \\
B_i \epsilon^{i} = E_i \epsilon^{i} + F_i \epsilon^{-i} = 0 & \\
A_i \epsilon^{i} = C_i \epsilon^{i} - D_i \epsilon^{-i} = 0, & \\
B_i \epsilon^{i} = E_i \epsilon^{i} - F_i \epsilon^{-i} = 0, & \\
\end{cases}
\end{equation*}
which imply that $A_i=C_i, \ B_i=D_i, \ D_i=F_i=0$ for $i>n+1$.
Now using expression \eqref{eq:harmonic} we have that
$A_i=C_i=\delta^{-i} (\avg_i^c u_R)_{\partial \tilde B}, \ 
B_i=D_i=\delta^{-i} (\avg_i^s u_R)_{\partial \tilde B}$ 
and we can write $u_R$ on $\tilde V$ as follows,
\begin{equation*}
    u_R^\perp(\rho,\theta,z) = \sum_{i=n+1}^\infty 
    \left(\frac{\rho}{\delta}\right)^i [(\avg_i^c u_R)|_{\partial \tilde B}(x_3) \cos (i\theta) 
    + (\avg_i^s u_R)|_{\partial \tilde B}(x_3) \sin (i\theta)],
\end{equation*}
that proves \eqref{eq:harmonic-lemma-1} by taking $\rho=\epsilon$.
In addition, \eqref{eq:harmonic-lemma-2} follows from \eqref{eq:fourier-bound-tilde}.

The final step is to estimate $\|Bu_R^\perp\|_{\spaceQ'}$ using \eqref{eq:harmonic-lemma-1} and \eqref{eq:harmonic-lemma-2}. 
Let us recall that for any $v\in H^\frac12(0,2\pi)$ we have,
\begin{equation*}
\mathrm{if} \
v = \avg_0v + \sum_{i=1}^\infty \sum_{*=c,s} \avg^*_i v
\ \mathrm{then} \
\|v\|_{\frac12,(0,2\pi)}^2=(\avg_0)^2 + \sum_{i=1}^\infty \sum_{*=c,s} (1+i)(\avg^*_i)^2\,.
\end{equation*}
We proceed as follows,
\begin{multline*}
    \|Bu_R^\perp\|_{\spaceQ'} \leq \left[\sum_{i=n+1}^\infty (1+i)\left(\frac{\epsilon}{\delta}\right)^{2i}
    \left(\|(\avg_i^c u_R)|_{\partial \tilde B}(x_3)\|_{0,\gamma}^2
    + \|(\avg_i^s u_R)|_{\partial \tilde B}(x_3)\|_{0,\gamma}^2 \right)\right]^\frac12
    \\
    \leq C \left(\|f\|_{0,\Omega}+ \|g\|_{\frac12,\partial\Omega}\right) \left(\frac{\epsilon}{\delta}\right)^{n+1}
    \left[\sum_{i=n+1}^\infty (1+i) \left(\frac{\epsilon}{\delta}\right)^{2(i-n-1)}\right]^\frac12
    \\
    \leq C \left(\|f\|_{0,\Omega}+ \|g\|_{\frac12,\partial\Omega}\right) \left(\frac{\epsilon}{\delta}\right)^{n+1}\,.
\end{multline*}
In conclusion, if $R$ represents the projection of $\spaceQ$ 
on the first $n$ cross-sectional Fourier modes defined on a narrow cylinder $\Gamma$ of radius $\epsilon$, under the restrictive assumption $\mathrm{supp}f \cap \tilde V = \emptyset$ for any $\tilde V \supset V$ we have
\begin{equation}
    \label{eq:residual-reduced-continuous-new-fourier}
    \begin{split}
        &\|e_u\|_{\spaceV} \leq C \left(\frac{\epsilon}{\delta}\right)^{n+1} \frac{\|A\|^\frac12}{\alpha^\frac12\beta_B} \left(\|f\|_{0,\Omega}+ \|g\|_{\frac12,\partial\Omega}\right),\\
        &\|e_\lambda\|_{\spaceQ} \leq C \left(\frac{\epsilon}{\delta}\right)^{n+1} \frac{\|A\|}{(\beta_B)^2} \left(\|f\|_{0,\Omega}+ \|g\|_{\frac12,\partial\Omega}\right),
    \end{split}
\end{equation}
where the constants of \eqref{eq:residual-reduced-continuous-new-fourier} do not depend on $\epsilon$.

A few remarks about the previous analysis are in order. First, we observe that it does not require regularity assumptions other that the minimal regularity ensured by the well posedness analysis of the full and reduced problems. 

Second, it highlights that the dimensionality reduction error is affected by the distance of $V$ from the boundary of from any other forcing term, quantified by means of the parameter $\delta$. In other words, it shows that if $\delta$ decreases, then the projection of higher modes is required to maintain a desired level of error.

Third, we notice that the results until section \ref{sec:isomorphism} are general with respect to the shape of $\Gamma$, provided that some regularity assumptions are satisfied by the mapping $\Phi$. Conversely, the analysis presented here strongly depend on the assumption that the inclusion $V$ is a cylinder of circular section. For example, on a generic section it would be no longer true  that the projection on $N$ Fourier modes implies that 
    \begin{equation*}
        u_R^\perp(\rho, \theta, z) =  \displaystyle \sum_{i = n+1}^{\infty} \rho^{i}[A_i(x_3) \cos(i\theta) + B_i(x_3) \sin(i\theta)].
    \end{equation*}

\subsubsection{Model error bound based on the approximation error}
We now address the analysis based on inequalities \eqref{eq:error-reduced-continuous-new}.
We observe that
\begin{equation*}
    \inf\limits_{w \in \spaceWN}\|\lambda - R^T w\|_{\spaceQ}
    \leq \|\lambda - R^T P \lambda \|_{L^2(\Gamma)}\,,
    \quad \forall \lambda \in L^2(\Gamma)\,.
\end{equation*}
Using the scaling argument \eqref{eq:scaling} with $q=0$, we obtain
\begin{equation*}
    \|\lambda - R^T P \lambda \|_{L^2(\Gamma)} 
    \leq J_M^{\frac12} 
    \left\|\hat\lambda - \left(\refavg_0\hat\lambda\hat\varphi_0 + \sum_{i=1}^n\sum_{*=c,s}\refavg_i^*\hat\lambda\hat\varphi_i^*\right)\right\|_{L^2(\partial \hat V)}\,.
\end{equation*}
We now apply the approximation properties of trigonometric polynomials, see for example \cite{canuto2007spectral}. Precisely, for any $0 \leq q$, for any $\hat v \in H^q(0,2\pi)$ we have
\begin{equation*}
    \left\|\hat v(\theta) - \left[\refavg_0\hat v(\theta)  
    + \sum_{i=1}^n \left(\refavg_i^c\hat v\cos(i\theta)
    + \refavg_i^s\hat v\sin(i\theta) \right)
    \right]\right\|_{L^2(0,2\pi)}
    \leq C n^{-q}|v|_{H^q(0,2\pi)}\,.
\end{equation*}
Then, assuming $\hat \lambda \in H^q (\partial \hat V)$ and extending the previous classical error bound on the whole $\partial \hat V$ and  we have,
\begin{equation*}
    \left\|\hat\lambda - \left(\refavg_0\hat\lambda(\hat x_3)\hat\varphi_0(\hat\rho,\hat\theta) + \sum_{i=1}^n\sum_{*=c,s}\refavg_i^*\hat\lambda(\hat x_3)\hat\varphi_i^*(\hat\rho,\hat\theta)\right)\right\|_{L^2(\partial \hat V)}
    \\
    \leq \hat C n^{-q} \|\hat \lambda\|_{H^q(\partial \hat V)}\,,
\end{equation*}
where the constant $\hat C$ is clearly independent of $\epsilon$.
Then, using \eqref{eq:scaling}, we map back the right hand side to the physical domain $\Gamma$,
\begin{equation*}
    \|\hat \lambda\|_{H^q(\partial \hat V)} 
    \leq |\Phi|^{q}_{1,\infty,\hat V} J_m^{-\frac12} 
    \|\lambda\|_{H^q(\Gamma)}\,.
\end{equation*}
Provided that $\lambda \in H^p(\Gamma)$, putting together the previous estimates and reminding that the constant $|\Phi|^{q}_{1,\infty,\hat V}  = \mathcal{O} (\epsilon^{q})$, while $J_M^{\frac12} J_m^{-\frac12}$ is independent of $\epsilon$, we obtain
\begin{equation}\label{eq:approximation-reduced-continuous-new-fourier}
    \|\lambda - R^T P \lambda \|_{\spaceQ} 
    \leq C \left(\frac{\epsilon}{n}\right)^q \|\lambda\|_{H^q(\Gamma)}\,,
\end{equation}
where the constant $C$ is independent of $\epsilon$.

It is obvious that \eqref{eq:approximation-reduced-continuous-new-fourier}
describes a much faster convergence of the dimensionality reduction error with $\epsilon \rightarrow 0, n,N\rightarrow \infty$ than \eqref{eq:residual-reduced-continuous-new-fourier}, provided that the Lagrange multiplier is regular enough, namely $\lambda \in H^p(\Gamma)$ with $p>1$. In absence of this additional regularity, we rely on \eqref{eq:residual-reduced-continuous-new-fourier}.

\section{Numerical approximation of the 1D-3D coupling}\label{sec:approximation}

Let us consider the discrete counterpart of problem \eqref{eq:model-problem-restricted-weak-W}, discretized by means of the finite element method. We develop the numerical discretization in the particular case already addressed in section \ref{sec:fiber}, namely $V$ is a rectilinear fiber $V=\{\mathbf{x}\in \mathbb{R}^3: x_1=\rho\cos\theta,\ x_2=\rho\sin\theta,\ x_3, \ (\rho,\theta,x_3) \in (0,\epsilon) \times [0,2\pi] \times (0,L)\}$. The general case of a curvilinear fiber can be addressed by discretizing it as a collection of piecewise linear segments, each treated separately as discussed in what follows. We note that in the discrete case using the mapping $\Phi: \hat V \mapsto V$ is impractical because it affects also the computational meshes. For this reason, in the discrete case we work only on the physical domain.

Let $X_h^k(\Omega) \subset \spaceV$ be the space of Lagrangian finite elements of polynomial order $k$ defined on a family of quasi-uniform meshes $\mathcal{T}_h^\Omega$. Under the assumption that $\gamma$ is a straight line, for the discretization of the Lagrange multiplier space we consider a family of one-dimensional partitions of $\gamma$ and we define a finite element space $X_h^k(\gamma) \subset \spaceWZ$ of piecewise polynomials of order $k$ defined on it. Let $N$ be the total numeber of modes, i.e.$N=2n+1$, we introduce $X_h^{k,N}(\gamma)= (X_h^k(\gamma))^{N+1} \subset \spaceWN$.
The discretization of problem \eqref{eq:model-problem-restricted-weak-W} reads as follows:
given $g\in \spaceQ'$ , find $(u_{R,h},\Lambda_h)$ in $X_h^k(\Omega) \times X_h^{k,N}(\gamma)$ such that 
\begin{equation} 
    \label{eq:discrete-problem}
    \begin{aligned}
        & \duality{Au_{R,h}, v_h} + \duality{B^T R^T\Lambda_h, v_h} = \duality{f,v_h} && \forall v \in X_h^k(\Omega)\,,\\
        &\duality{R B u_{R,h},w_h} = \duality{R g,w_h} && \forall w_h \in X_h^{k,N}(\gamma)\,.
    \end{aligned}
\end{equation}

Before proceeding, let us define as $\pi_h^k(\Omega)$ a stable and conforming interpolation operator $\space V \mapsto X_h^k(\Omega)$. For example, we choose the Scott-Zhang operator \cite{scott1990finite}. Similarly, let $\pi_h^k(\gamma)$ be the $L^2$ projector $\spaceWZ \mapsto X_h^k(\gamma)$. With little abuse of notation we will omit sometimes to denote the domain of application, if clear from the context. 

For the particular case of section \ref{sec:fourier}, let be $\spaceMN=\mathrm{span}\{\varphi_0(x),\varphi_i^*(x)\}_{i=1}^{n}$ $*=c,s$, the space of basis functions on the cross section of $V$. We use the functions $\varphi_i$ to construct the average operator, $P$ and extension operator $R^T$ according to \eqref{eq:definition-operator-P} and \eqref{eq:definition-operator-R-transpose}, respectively. In particular, we chose the basis functions as the Fourier modes defined in \eqref{eq:fourier-modes}. 

The following Lemma shows that to obtain sufficient conditions for the stability of the method, we need conformity restrictions between the partitions of $\Omega$ and $\gamma$, as well as between the polynomial order $k$ and the number of modes $n$.
To formulate these restrictions, we introduce the domain $V_h$ defined as the collection of all the elements $K \in \mathcal{T}_h^\Omega$ that intersect $V$. Moreover, let $V_{\epsilon + h}$ be the cylinder of radius $\epsilon +h$ with centerline $\gamma$. According to the definition of $h$ we have $V \subset V_h \subset V_{\epsilon+h}$.
{\color{black} This analysis is based on the results obtained in \cite{Kuchta2021558,boulakia:hal-03501521} and partially extends them to a more general framework.}

\begin{lemma}\label{lem:conformity}
    If the basis functions of $\spaceMN$ are as in \eqref{eq:fourier-modes},
    if $k \geq n$ and if the meshes of the 1D and the 3D domains are conforming, namely the faces of the elements $K \in \mathcal{T}_h^\Omega \cap V_h$ are co-planar with the normal plane to $\gamma$ at the vertices of $\mathcal{T}_h^\gamma$, then we have $X_h^k(\gamma) \times \spaceMN \subset X_h^k(V_h)$.
\end{lemma}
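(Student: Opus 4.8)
The plan is to reduce the claimed inclusion to a statement about single tensor-product functions and then to verify, element by element, that each such function is a continuous piecewise polynomial of the correct local degree. Recall that, by the construction of the extension operator in \eqref{eq:definition-operator-R-transpose}, a generic element of $X_h^k(\gamma)\times\spaceMN$ is a finite sum of terms of the form $\ext_i w = \varphi_i(x_1,x_2)\,(w\circ\proj)$, with $w\in X_h^k(\gamma)$ and $\varphi_i\in\spaceMN$. Since $\proj$ is the orthogonal projection onto the centerline $\gamma$ (aligned with the $x_3$-axis), the factor $w\circ\proj$ depends only on $x_3$ and is constant on each cross section. By linearity it therefore suffices to show that every basis product $\phi := \varphi_i(x_1,x_2)\,w(x_3)$ belongs to $X_h^k(V_h)$.

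The first key observation is that each weight $\varphi_i$ is in fact a polynomial, and not merely a function of $(\rho,\theta)$: from \eqref{eq:fourier-modes} one has $\varphi_i^c = \rho^i\cos(i\theta) = \operatorname{Re}\big((x_1+\mathrm{i}x_2)^i\big)$ and $\varphi_i^s = \rho^i\sin(i\theta) = \operatorname{Im}\big((x_1+\mathrm{i}x_2)^i\big)$, so that each $\varphi_i^{*}$ is a harmonic homogeneous polynomial of degree $i$ in $(x_1,x_2)$, with degree at most $i$ in each coordinate separately. The hypothesis $k\geq n$ then guarantees that $\varphi_i^{*}$ lies in the degree-$k$ polynomial space on any cross section for all $i\leq n$; this is precisely where the assumption $k\ge n$ enters.

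Next I would use the conformity hypothesis to establish element-wise polynomiality. Fix $K\in\mathcal{T}_h^\Omega\cap V_h$. Because the transverse faces of $K$ are co-planar with the normal planes to $\gamma$ located at the vertices of $\mathcal{T}_h^\gamma$, the axial extent of $K$ coincides with a single cell of the one-dimensional mesh $\mathcal{T}_h^\gamma$, on which $w$ is a single polynomial of degree at most $k$ in $x_3$. Hence $\phi|_K = \varphi_i(x_1,x_2)\,w(x_3)$ is a polynomial whose cross-sectional degree is at most $i\le k$ and whose axial degree is at most $k$; by the tensor-product (prismatic) structure of the elements of $V_h$ forced by the conformity assumption, this product lies in the local space $X_h^k(K)$. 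Finally, $\phi$ is globally continuous on $V_h$, being the product of the globally continuous factors $\varphi_i$ (a global polynomial) and $w$ (continuous since $w\in X_h^k(\gamma)$); and a continuous function that is element-wise in the local Lagrange polynomial space belongs to the global $C^0$-conforming space $X_h^k(V_h)$, which concludes the argument.

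I expect the main obstacle to be exactly the step matching the product to the local space. The product of a cross-sectional polynomial of degree up to $n=k$ with an axial polynomial of degree $k$ has total degree up to $2k$, so it does \emph{not} fit into a purely simplicial $P_k$ element; the inclusion holds only because the conformity hypothesis imposes a tensor-product structure in which the cross-sectional and axial degrees are budgeted independently. Care is therefore needed in formulating the local finite element space (prismatic, or $Q_k$ on axis-aligned cells, where the pullback of a product along the affine axial direction stays in the local space), and in checking that continuity across the aligned transverse and lateral faces is correctly inherited from the two one-variable factors.
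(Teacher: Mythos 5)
Your proof follows essentially the same route as the paper's: expand a generic element of $X_h^k(\gamma)\times\spaceMN$ into products of axial finite element functions with the Fourier weights, observe that $\rho^i\cos(i\theta)$ and $\rho^i\sin(i\theta)$ are polynomials of degree at most $i\le n\le k$ in $(x_1,x_2)$ (the paper invokes Chebyshev polynomials where you use $\operatorname{Re}\,(x_1+\mathrm{i}x_2)^i$ and $\operatorname{Im}\,(x_1+\mathrm{i}x_2)^i$, an equivalent and arguably cleaner identity), and then use the mesh-conformity hypothesis to get element-wise polynomiality plus global continuity. Your closing caveat about tensor-product versus simplicial local spaces (axial degree $k$ times cross-sectional degree $n$ exceeding total degree $k$) is a genuine subtlety that the paper's proof passes over silently, and it is resolved exactly as you indicate: the Lagrangian elements used here are tensor-product ($Q_k$-type) spaces on hexahedral cells, so the cross-sectional and axial degrees are budgeted independently.
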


\begin{proof}
    Let us take the function $v \in X_h^k(\gamma) \times \spaceMN$ such that
    \begin{equation*}
        v= v_{h,c}^0(x_3) + \sum_{i=1}^n \Big(
        v_{h,i}^c(x_3)\left(\frac{\rho}{\epsilon}\right)^{i}\cos(i\theta) 
        + v_{h,i}^s(x_3)\left(\frac{\rho}{\epsilon}\right)^{i}\sin(i\theta) \Big)
    \end{equation*}
    where $v_{h,0}(x_3),\,v_{h,i}^c(x_3),\,v_{h,i}^s(x_3)  \in X_h^k(\gamma)$.
    
    We observe that the trigonometric polynomials  
    $\left(\frac{\rho}{\epsilon}\right)^{i} \cos(i\theta), 
    \ \left(\frac{\rho}{\epsilon}\right)^{i} \sin(i\theta)$ for $1\leq i \leq n$, we can be written as polynomials of $x_1$ and $x_2$ of degree smaller than or equal to $i$, where $(x_1,x_2)$ satisfies $(x_1, x_2)= (\rho \cos(\theta), \rho \sin(\theta))$. This can be achieved, for example, by using Chebyshev polynomials. 

    Then, owing to the assumption that the faces $\mathcal{T}_h^\Omega$ are co-planar with the normal plane to $\gamma$ at the vertices of $\mathcal{T}_h^\gamma$, we conclude that $v$ is also a piecewise polynomial function of order $k$ conforming to any element $K \in \mathcal{T}_h^\Omega \cap V_h$, namely $v \in X_h^k(V_h)$.
\end{proof}

For the analysis of \eqref{eq:discrete-problem}, we observe that the second equation can be equivalently rewritten as $\duality{B u_{R,h}, R^T w_h} = \duality{g, R^T w_h}$. Then the stability of the problem is related to the inf-sup condition stated in the following Lemma.

\begin{lemma} 
Under assumptions of Lemma \ref{lem:conformity} there exists a constant $$\beta_h^{n,\epsilon} = C \beta_R {\left(1 + \frac{h}{\epsilon}\right)^{-n}},$$ where $C>0$ is a constant independent of $h,n,\epsilon$, such that for all $w_h \in X_h^k(\gamma)$, 
\begin{equation*}
\sup \limits_{v_{h} \in X_h^k(\Omega)} \frac{\duality{B v_h, R^T w_h}}{\|v_{h}\|_{1, \Omega}} \geq \beta_h^{n,\epsilon} \|w_h\|_{-\frac{1}{2}, \gamma}\,.
\end{equation*}
\end{lemma}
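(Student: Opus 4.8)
The plan is to prove the lower bound by exhibiting, for each fixed $w_h$, a single admissible test function $v_h \in X_h^k(\Omega)$ and estimating the resulting quotient; since the supremum dominates any particular choice, one good $v_h$ suffices. The natural candidate is a \emph{discrete} extension of $R^T w_h$ into the solid tube surrounding $\gamma$. The key enabling fact is Lemma~\ref{lem:conformity}: under the conformity hypotheses and $k\ge n$, the modal building blocks $(\rho/\epsilon)^i\cos(i\theta)$ and $(\rho/\epsilon)^i\sin(i\theta)$ multiplied by one-dimensional finite element functions on $\gamma$ already belong to $X_h^k(V_h)$. I would therefore let $v_h$ coincide on $\Gamma$ with $R^T w_h$, extend it radially through these harmonic modes, and multiply by a piecewise-linear radial cutoff equal to $1$ on $\Gamma$ and vanishing on the outer faces of the elements of $V_h$, so that the zero extension to $\Omega\setminus V_h$ is globally $H^1_0$-conforming and lies in $X_h^k(\Omega)$, with support contained in $V_{\epsilon+h}$.

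With this choice the numerator is immediate: since $v_h|_\Gamma = R^T w_h$ and the Fourier modes are mutually orthogonal on each cross section, $\duality{B v_h, R^T w_h} = \|R^T w_h\|_{0,\Gamma}^2$. The delicate term is the denominator $\|v_h\|_{1,\Omega}=\|v_h\|_{1,V_{\epsilon+h}}$, which I would estimate mode by mode, splitting the gradient into radial, angular and axial parts. The decisive observation is that, passing from $\Gamma$ (radius $\epsilon$) to the outer boundary of $V_h$ (radius at most $\epsilon+h$), the $i$-th harmonic mode is amplified by the factor $\bigl((\epsilon+h)/\epsilon\bigr)^i = (1+h/\epsilon)^i \le (1+h/\epsilon)^n$. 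This single amplification of the highest retained mode is precisely the origin of the degradation $(1+h/\epsilon)^{-n}$ in $\beta_h^{n,\epsilon}$; the remaining contributions carry only additional powers of $\epsilon$ and $h$ coming from the cross-sectional geometry (circumferential measure $\sim\epsilon$, cutoff width $\sim h$).

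Combining the two bounds gives a quotient controlled from below by $(1+h/\epsilon)^{-n}$ times $\|R^T w_h\|_{0,\Gamma}$ up to geometric factors, and the last step is to convert this $L^2(\Gamma)$ norm of the extended multiplier into the target norm $\|w_h\|_{-\frac12,\gamma}$. For this I would use the tensor-product scaling relations of Section~\ref{sec:isomorphism} (with $J_m=J_M=\epsilon L$ for the rectilinear fiber) together with the continuous inf-sup of the extension operator from Theorem~\ref{theo:reference-extension}, which bounds $\|R^T w_h\|$ below by $\beta_R\|w_h\|$, thereby producing the announced constant $\beta_h^{n,\epsilon}=C\beta_R(1+h/\epsilon)^{-n}$. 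The main obstacle is exactly this final bookkeeping coupled with the sharp estimate of the extension across the annulus $V_{\epsilon+h}\setminus V$: one must track the $\epsilon$- and $h$-dependence of every norm equivalence and inverse inequality \emph{uniformly in} $i\le n$, absorb the stray geometric powers so that only $(1+h/\epsilon)^{-n}$ survives, and verify that the orthogonality and cutoff constructions respect the finite element conformity granted by Lemma~\ref{lem:conformity}, so that $v_h$ is genuinely admissible in $X_h^k(\Omega)$.
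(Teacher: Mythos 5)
Your construction identifies the correct amplification mechanism (the factor $(1+h/\epsilon)^{n}$ coming from evaluating the modes at radius $\epsilon+h$), but the overall strategy --- testing with a \emph{single} $v_h$ whose trace on $\Gamma$ equals $R^T w_h$ --- cannot deliver the claimed constant, because it proves a lower bound in the wrong norm. With your choice the numerator is $\duality{Bv_h,R^Tw_h}=\|R^Tw_h\|_{0,\Gamma}^2$, an $L^2$ quantity, while the statement requires the $H^{-1/2}$ norm. Since the trace of your $v_h$ is $R^Tw_h$, the trace theorem forces $\|v_h\|_{1,\Omega}\geq c\,\|R^Tw_h\|_{\frac12,\Gamma}$ (and your radial cutoff of width $h$ contributes an additional term of order $h^{-1/2}\|R^Tw_h\|_{0,\Gamma}$ to the denominator). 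Turning your quotient into a lower bound by $\|R^Tw_h\|_{-\frac12,\Gamma}$ would therefore require the \emph{reverse} of the interpolation inequality $\|\mu\|_{0,\Gamma}^2\leq\|\mu\|_{\frac12,\Gamma}\,\|\mu\|_{-\frac12,\Gamma}$, which is false in general; in the discrete space one can substitute inverse inequalities, but only at the price of a factor $h^{1/2}$. The best your route can yield is $\beta_h^{n,\epsilon}\gtrsim h^{1/2}\,(1+h/\epsilon)^{-n}$, a constant that degenerates as $h\to 0$, whereas the constant in the lemma tends to $C\beta_R$ as $h\to 0$. This obstruction is structural: for a multiplier measured in $H^{-1/2}$, no single test function built from the multiplier itself can realize the dual norm.

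The paper's proof avoids this with a Fortin-type argument: it constructs, for \emph{every} $q\in H^{\frac12}(\partial V)$, a function $v^q$ that inside $V_{\epsilon+h}$ equals the modal expansion with coefficients $\pi_h^k(\gamma)\avg_0 q$, $\pi_h^k(\gamma)\avg_i^c q$, $\pi_h^k(\gamma)\avg_i^s q$, and outside $V_{\epsilon+h}$ is the harmonic lifting in $H^1_0(\Omega)$. The orthogonality of the modes together with the fact that $\pi_h^k(\gamma)$ is an $L^2(\gamma)$-orthogonal projection give the exact reproduction property $\duality{Bv^q,R^Tw_h}=\duality{q,R^Tw_h}$, while the lifting estimate gives $\|v^q\|_{1,\Omega}\leq C(1+h/\epsilon)^{n}\|q\|_{\frac12,\partial V}$; the Scott--Zhang interpolant $v_h^q=\pi_h^k(\Omega)v^q$ preserves both properties because, by Lemma~\ref{lem:conformity}, $v^q$ is already a finite element function on $V_h$. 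Taking the supremum over all such $q$ then recovers the full dual norm $\|R^Tw_h\|_{-\frac12,\partial V}\geq\beta_R\|w_h\|_{-\frac12,\gamma}$ with no loss of powers of $h$. To repair your write-up, replace ``one good $v_h$ suffices'' by this family of test functions indexed by $q$: the radial-mode extension and the $(1+h/\epsilon)^{n}$ bookkeeping you already carried out transfer essentially verbatim to the functions $v^q$.
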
 

\begin{proof}
Let $q \in \spaceQ^\prime=H^\frac12(\partial V)$ be given. 
Let $x_3$ be the axial coordinate along $\gamma$.
Let $\avg_{0}(x_3)$ and $\avg^c_{i}(x_3),\,\avg^s_{i}(x_3)$, for all $i \geq 1$ be weighted the averaging operators introduced before.
Then, we consider $v^q \in H^1_0(\Omega)$ such that inside $V_{\epsilon +h}$ it is given by, 
\begin{multline}\label{eq:discrete-liftng-e+h}
v^q = \pi_h^k(\gamma) \avg_0q(x_3) 
\\
+ \sum_{i = 1}^{n} \Big( \pi_h^k(\gamma) \avg_i^c q (x_3)\left(\frac{\rho}{\epsilon}\right)^{i} \cos(i\theta) + \pi_h^k(\gamma) \avg_{i}^s q (x_3) \left(\frac{\rho}{\epsilon}\right)^{i} \sin(i\theta) \Big)
\end{multline}
and outside $V_{\epsilon + h}$, $v^q$ is defined as the harmonic lifting in $H^1_0(\Omega)$. 
Let us note that $v^q \in X_h^k(\gamma) \times \spaceMN$ in the cylinder $V_{\epsilon +h}$. Then, owing to Lemma \ref{lem:conformity}, $v^q \in X_h^k(V_h)$.
We now show the following properties of $v^q$. 

First, for any function $w \in X_h^k(\gamma) \times \spaceMN$,
$$w=w_{h,0}(x_3) + \sum_{i=1}^n \Big(w_{h,i}^c(x_3)\left(\frac{\rho}{\epsilon}\right)^{i}\cos(i\theta) + w_{h,i}^s(x_3)\left(\frac{\rho}{\epsilon}\right)^{i}\sin(i\theta) \Big)$$
exploiting that the basis of $\spaceMN$ is orthonormal and that $\pi_h^k(\gamma)$ is an orthogonal projection, we obtain that
\begin{multline*}
    \duality{v^q|_{\partial V}, w} 
    = \int_\gamma \Big[\pi_h^k \avg_0 q w_{h,0}
    +\sum_{i=1}^n \Big(\pi_h^k \avg_i^c q w_{h,i}^c
    + \pi_h^k \avg_i^s q  w_{h,i}^s \Big) \Big] dx_3
    \\
    = \int_\gamma \Big[\avg_0q w_{h,0}
    +\sum_{i=1}^n \Big( \avg_i^c w_{h,i}^c
    + \avg_i^s q  w_{h,i}^s \Big) \Big] dx_3
    = \duality{q, w}\,.
\end{multline*}

Second, equation \eqref{eq:discrete-liftng-e+h} defines a a well posed lifting operator $\partial V_{\epsilon+h} \rightarrow \Omega$. Thanks to the stability of such operator we obtain that the following inequality, with a constant $C$ independent of $\epsilon,n$
\begin{equation*}
    \|v^q\|_{1, \Omega} \leq C \|v^q\|_{\frac{1}{2}, \partial V_{\epsilon + h}}\,.
\end{equation*}
 Moreover taking $v^q$ on $\partial V_{\epsilon + h}$ we get, 
 \begin{equation*}
     v^q = \pi_h^k \avg_0q + \sum_{i = 1}^{n} \pi_h^k \avg_i^c \left(1 + \frac{h}{\epsilon}\right)^{i}\cos(i\theta) + \pi_h^k \avg_i^s \left(1 + \frac{h}{\epsilon}\right)^{i} \sin(i\theta)\,.
 \end{equation*}
As a result we obtain,
\begin{multline*}
\|v^q\|_{\frac{1}{2}, \partial V_{\epsilon + h}} 
\\
\leq C \left[ \|\pi_h^k \avg_0 q\|_{L^2(\gamma)}^2 + \sum_{i = 1}^{n} (1+i) \left(1 + \frac{h}{\epsilon}\right)^{2i} \left( \|\pi_h^k \avg_i^c q\|_{L^2(\gamma)}^2 + \|\pi_h^k \avg_i^s q\|_{L^2(\gamma)}^2 \right) \right]^{\frac{1}{2}} 
\\ 
\leq C \left(1 + \frac{h}{\epsilon}\right)^{n} \left[ \|\pi_h^k \avg_0q\|_{L^2(\gamma)}^2 
+ \sum_{i=1}^{n} (1+ i) \left( \|\pi_h^k \avg_i^c q\|_{L^2(\gamma)}^2 + \|\pi_h^k \avg_i^s q\|_{L^2(\gamma)}^2 \right) \right]^{\frac{1}{2}}
\\
\leq C \left(1 + \frac{h}{\epsilon}\right)^{n} \|q\|_{\frac{1}{2}, \partial V}\,. 
\end{multline*}
Combining the previous inequalities we get,
\begin{equation*}
    \|v^q\|_{1, \Omega} \leq C \left(1 + \frac{h}{\epsilon}\right)^{n} \|q\|_{\frac{1}{2}, \partial V}\,.
\end{equation*}

Finally, let us chose $v_h^q = \pi_h^k(\Omega) v^q$. Owing to the properties of $\pi_h^k(\Omega)$ we have that $v_h^q=v^q$ in $V_h$ and that $\|v_h^q\|_{1,\Omega} \leq C \|v^q\|_{1,\Omega}$, where $C$ is a positive constant independent of $\epsilon,h,n$. Exploiting the surjectivity of the mapping $q \mapsto v^q \mapsto v_h^q$ we then obtain,
\begin{equation*}
    \sup \limits_{v_{h} \in X_h^k(\Omega)} \duality{v_{h}, R^T w_h} = \sup \limits_{q \in H^\frac12(\partial V)} \duality{q, R^T w_h}\,.
\end{equation*}
Recalling that the operator $R^T$ is linear and bounding with constant $\beta_R$, i.e. \eqref{eq:RT-bounding}, the previous inequalities directly imply that
\begin{multline*}
\sup \limits_{v_{h} \in X_h^k(\Omega)}\frac{\duality{v_{h}, R^T w_h}}{\|v_{h}\|_{1, \Omega}}  
\geq \displaystyle C{\left(1 + \frac{h}{\epsilon}\right)^{-n}}  \sup \limits_{q \in H^\frac12(\partial V)} \frac{\duality{q, R^T w_h}}{\|q\|_{\frac{1}{2}, \partial V}}
 \\
= \displaystyle C{\left(1 + \frac{h}{\epsilon}\right)^{-n}}
 \|R^T w_h\|_{-\frac{1}{2}, \partial V} 
 \geq C \beta_R {\left(1 + \frac{h}{\epsilon}\right)^{-n}}
\|w_h\|_{-\frac{1}{2}, \gamma}\,.
\end{multline*}
\end{proof}

Owing to the previous results and according to Theorem 5.2.2 of \cite{Boffi2013}, we obtain the following a-priori error estimates,
\begin{align*}
    \|u_R - u_{R,h}\|_{\spaceV} &\leq 
    \left(\frac{2\|A\|}{\alpha} + \frac{2\|A\|^\frac12\|RB\|}{\alpha^\frac12\beta_h^{n,\epsilon}}\right)\|E_{u,h}\|_{\spaceV}
    + \frac{\|RB\|}{\alpha} \|E_{\Lambda,h}\|_{\spaceWN}\,,
    \\
    \|\Lambda-\Lambda_h\|_{\spaceWN} &\leq
    \left(\frac{2\|A\|^\frac32}{\alpha^\frac12 \beta_h^{n,\epsilon}}
    +\frac{\|A\|\|RB\|}{(\beta_h^{n,\epsilon})^2}\right)\|E_{u,h}\|_{\spaceV}
    +\frac{3\|A\|^\frac12\|RB\|}{\alpha\beta_h^{n,\epsilon}} \|E_{\Lambda,h}\|_{\spaceWN}\,,
\end{align*}
where $\|E_{u,h}\|_{\spaceV}$ $\|E_{\lambda,h}\|_{\spaceWN}$ are the approximation errors of the selected finite element spaces,
\begin{equation*}
    \|E_{u,h}\|_{\spaceV} := \inf\limits_{v_h \in X_h^k(\Omega)} \|u-v_h\|_{\spaceV}\,,
    \quad
    \|E_{\Lambda,h}\|_{\spaceWN} := \inf\limits_{w_h \in X_h^k(\gamma)} \|\Lambda-w_h\|_{\spaceWN}\,.
\end{equation*}
We note that the approximation errors $\|E_{u,h}\|_{\spaceV}$ $\|E_{\lambda,h}\|_{\spaceWN}$ may not scale optimally with respect to $h$ because of the lack of global regularity of the solution on $\Omega$. Since the solution exhibits low regularity across the interface, strategies to mitigate this drawback may include using conforming meshes to the interface or graded meshes in the neighborhood of it.

We conclude this section with a synthesis of the previous analyses about the modeling error and the approximation error.
Putting together the error estimates in $\epsilon,N$ and $h$ and using the triangle inequality we obtain that there exist positive constants $C_i,\,i=1,\ldots,6$ independent of $\epsilon,n,h$, such that
\begin{equation}
    \label{eq:mainerrorestimate1}
    \|u - u_{R,h}\|_{\spaceV} \leq 
     C_1\left(\frac{\epsilon}{\delta}\right)^{n+1}
    +C_2\left(1+\frac{h}{\epsilon}\right)^n\|E_{u,h}\|_{\spaceV}
    +C_3\|E_{\Lambda,h}\|_{\spaceWN},
\end{equation}
\begin{multline}
    \label{eq:mainerrorestimate2}
     \|\lambda-\Lambda_h\|_{\spaceQ} \leq
     C_4\left(\frac{\epsilon}{\delta}\right)^{n+1}
    +C_5\left(1+\frac{h}{\epsilon}\right)^{2n}\|E_{u,h}\|_{\spaceV}
    \\
    +C_6\left(1+\frac{h}{\epsilon}\right)^{n}\|E_{\Lambda,h}\|_{\spaceWN}. 
\end{multline}
These results are particularly interesting because they highlight the interplay between the modeling error and the approximation error and they provide guidelines to balance suitably these two error components of the proposed method.

\section{Numerical examples}\label{sec:numerics}

The main objective of this section is to illustrate by means of selected numerical tests the interplay of the parameters $h,n,\epsilon$, the mesh characteristic size, the dimension of the Lagrange multiplier space and the size of the inclusion, respectively, on the whole approximation error of the proposed approach, formally represented in \eqref{eq:mainerrorestimate1}-\eqref{eq:mainerrorestimate2}.

The provided examples have been implemented using the open source library
\texttt{deal.II}~\cite{ArndtBangerthDavydov-2021-a,dealII94,Sartori2018,Maier2016}. In
particular, we use bi- and tri-linear finite elements for the approximation of
the solution and of the Lagrange multiplier in the full order method.

\subsection{Two dimensional examples}
\label{sec:numerics-dirichlet-2d}

\paragraph{$h$-convergence}
We start by considering the cross section of a cylindrical vessel embedded in a cubic domain, where Dirichlet boundary conditions are applied on the boundary of the vessel, and some manufactured boundary conditions are imposed on the boundary of the cube to recover a known manufactured solution.

The corresponding two dimensional setting we consider is that of a square $\Omega = [-1,1]^2$, with a circular inclusion $V \equiv B_\epsilon(0)$, where $\epsilon$ is the radius of the cross section.

In particular we impose boundary conditions so that the resulting solution is harmonic in $\Omega\setminus\Gamma$, i.e., 
\begin{equation}
    \label{eq:manufactured-solution-D1}
    \begin{aligned}
        -\Delta u & =  0 && \mathrm{in} \ \Omega\setminus \Gamma \equiv [-1,1]^2\setminus \partial B_\epsilon(0),\\
        u &= -\ln(x^2+y^2)/2  && \mathrm{on} \ \Gamma \cup \partial \Omega, \\
        \lambda & = \frac{1}{\epsilon} = [\nabla u]\cdot n && \mathrm{on} \  \Gamma \equiv \partial B_{\epsilon}(0).\\
    \end{aligned}
\end{equation}

For this particular problem, the manufactured solution is a truncated fundamental solution, and it represents one of the simplest examples of solutions around a circular inclusion (or a cylindrical one in three dimensions). The major characteristic of this solution is that it is harmonic in the entire domain $\Omega\setminus\Gamma$, with a constant jump on the gradient on $\Gamma$ which increases as the radius of the inclusion decreases, and it has a constant value $u = -\ln(\epsilon)$ on the boundary of the inclusion.

\begin{figure}[t]
    \includegraphics[width=0.45\textwidth]{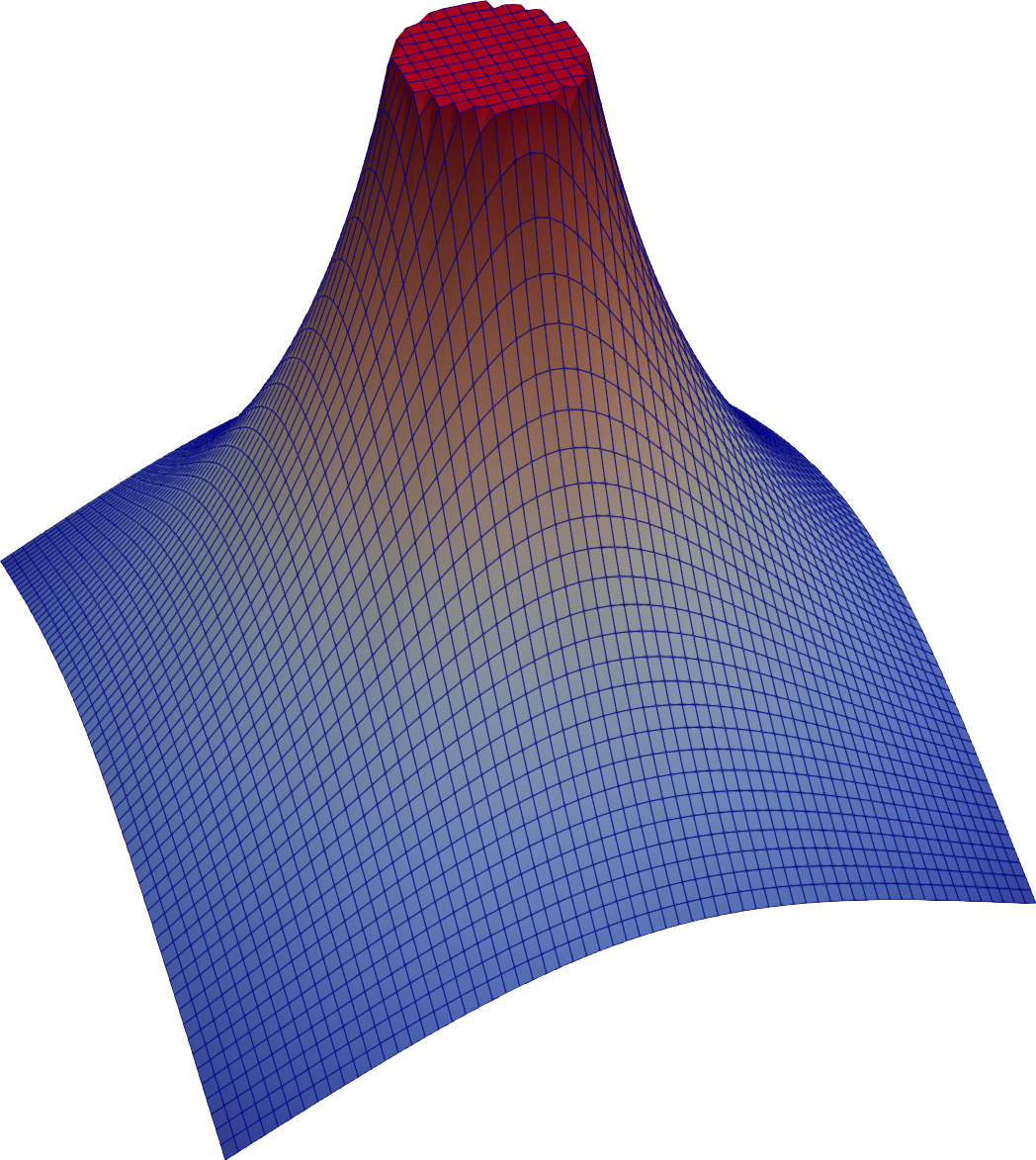}
    \hfill
    \includegraphics[width=0.45\textwidth]{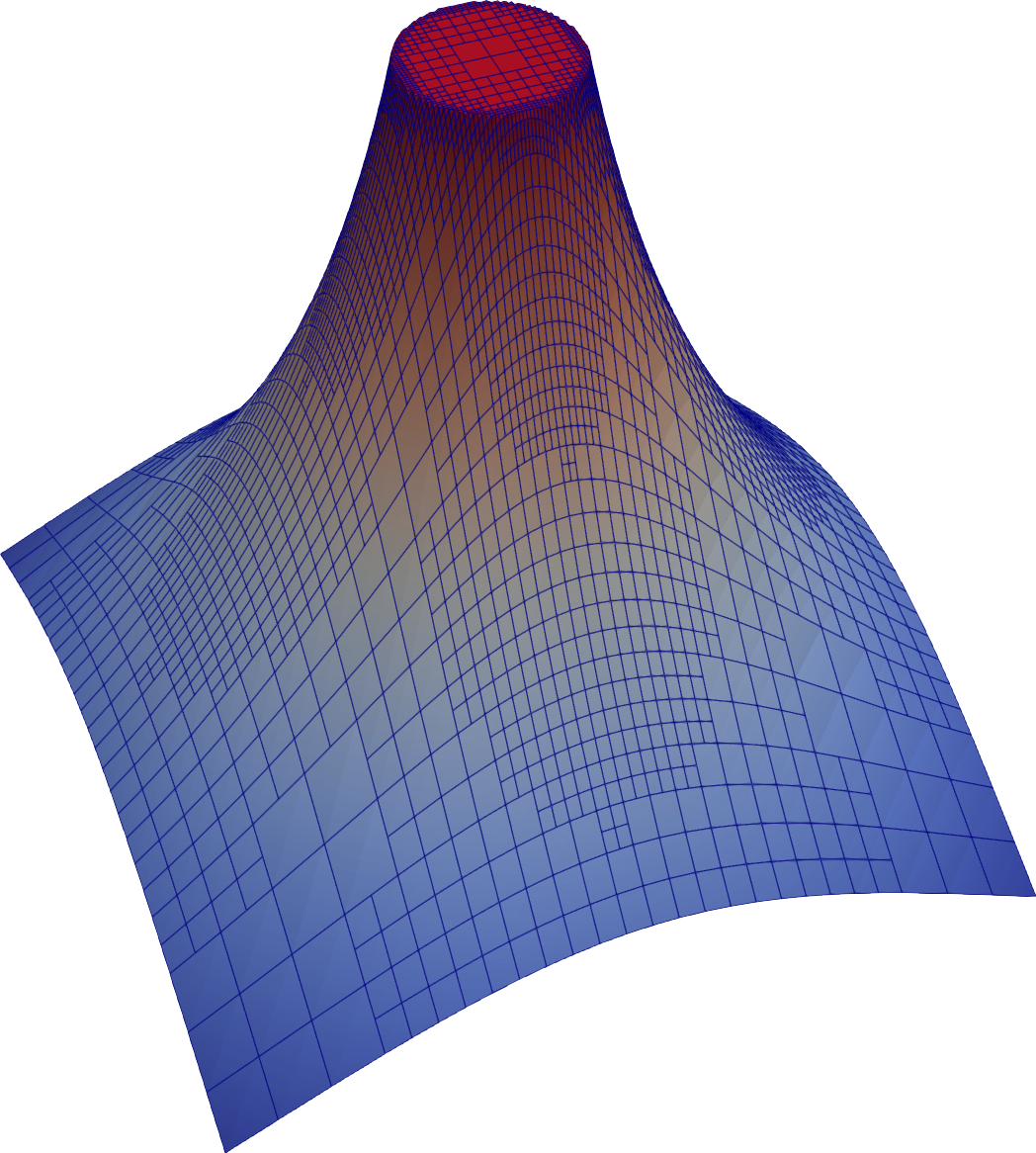}
    \caption{Example for the numerical solution with global refinements for problem \eqref{eq:manufactured-solution-D1} (left) and with local refinements (right), using a single Fourier mode and $r=0.2$.}
    \label{fig:fundamental_D1}
\end{figure}

Such characteristics make this one of the simplest manufactured solution since both the solution $u$ and the Lagrange multiplier $\lambda$ are constant on $\Gamma$, allowing one to test the exact solution of the problem when using just one Fourier mode. Precisely, for this articular case the error on the right hand side of \eqref{eq:error-reduced-continuous-new} is null. For this reasons, this is the ideal case to test the $h$-convergence of the method, addressed in Figure~\ref{fig:fundamental_D1_error}.


\begin{figure}
\includegraphics{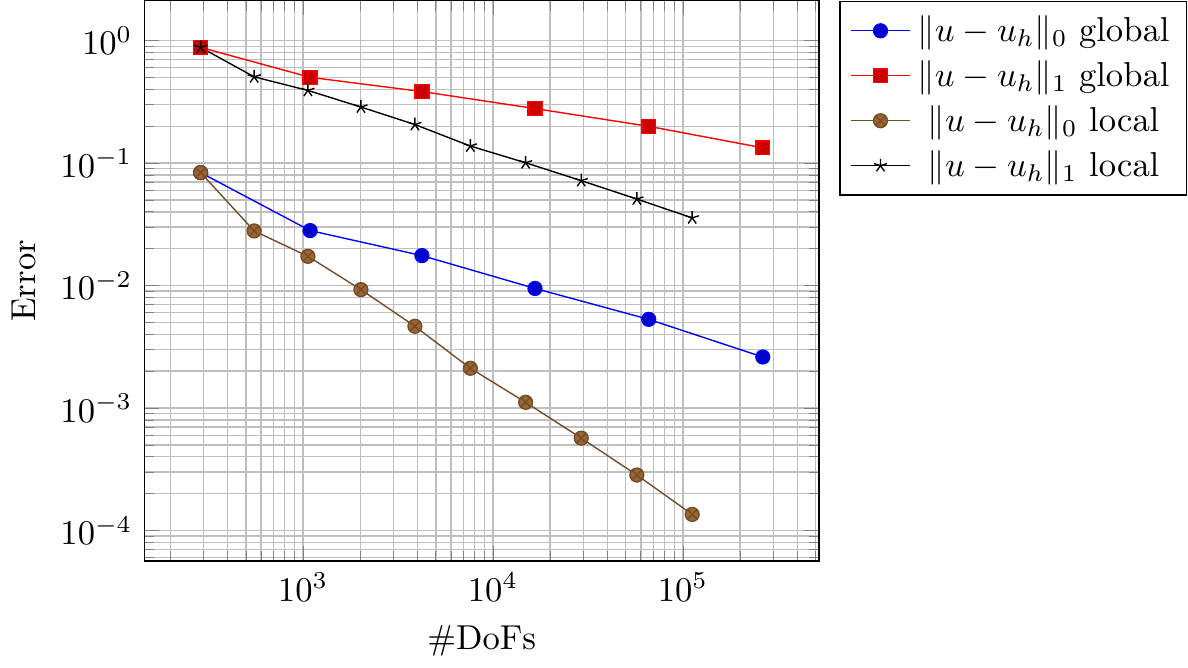}
\caption{Error of the numerical solution with global refinements (left) and with local refinements (right), using a single Fourier mode for problem~\eqref{eq:manufactured-solution-D1}, when $r=0.2$.}
\label{fig:fundamental_D1_error}
\end{figure}

The error of the numerical solution with global refinements (left) and with adaptive local refinements (right), using a single Fourier mode is shown in Figure~\ref{fig:fundamental_D1_error}. The error is computed in the $L^2$ and $H^1$ norms. In the global refinement case, the orders of convergence are $1.5$ for the $L^2$ error, and $0.5$ for the $H^1$ error, as expected from the global regularity of the solution, even though optimal error estimates may be recovered by measuring the error with weighted norms~\cite{HeltaiRotundo-2019-a}. 

\paragraph{The role of $n,N$}
Adaptive finite element methods offer optimal error convergence rates of $2$ for the $L^2$ error and $1$ for the $H^1$ error~\cite{Cohen2012}, providing an excellent combination of accuracy and efficiency.

Although Figure~\ref{fig:fundamental_D1_error} seems to suggest that the error of the numerical solution is acceptable even with just one Fourier mode, one should carefully notice that this is generally not the case in realistic scenarios. An example about the importance of the parameter $n$ is given by the solution of the following manufactured problem:
\begin{equation}
    \label{eq:manufactured-solution-D2}
    \begin{aligned}
        -\Delta u & =  0 && \mathrm{in} \ \Omega\setminus \Gamma \equiv [-1,1]^2\setminus \partial B_\epsilon(0),\\
        u &= x \frac{\epsilon^2}{x^2+y^2}  && \mathrm{on} \ \Gamma \cup \partial \Omega, \\
        \lambda & = \frac{x}{\epsilon} = [\nabla u]\cdot n && \mathrm{on} \  \Gamma \equiv \partial B_\epsilon(0),\\
    \end{aligned}
\end{equation}
which has exact solution equal to $u(x) = x$ inside the inclusion $V$, and $u(x) = x \epsilon^2/(x^2+y^2)$ outside of the inclusion (see Figure~\ref{fig:fundamental_D2}, right).

\begin{figure}
    \includegraphics[width=.45\textwidth]{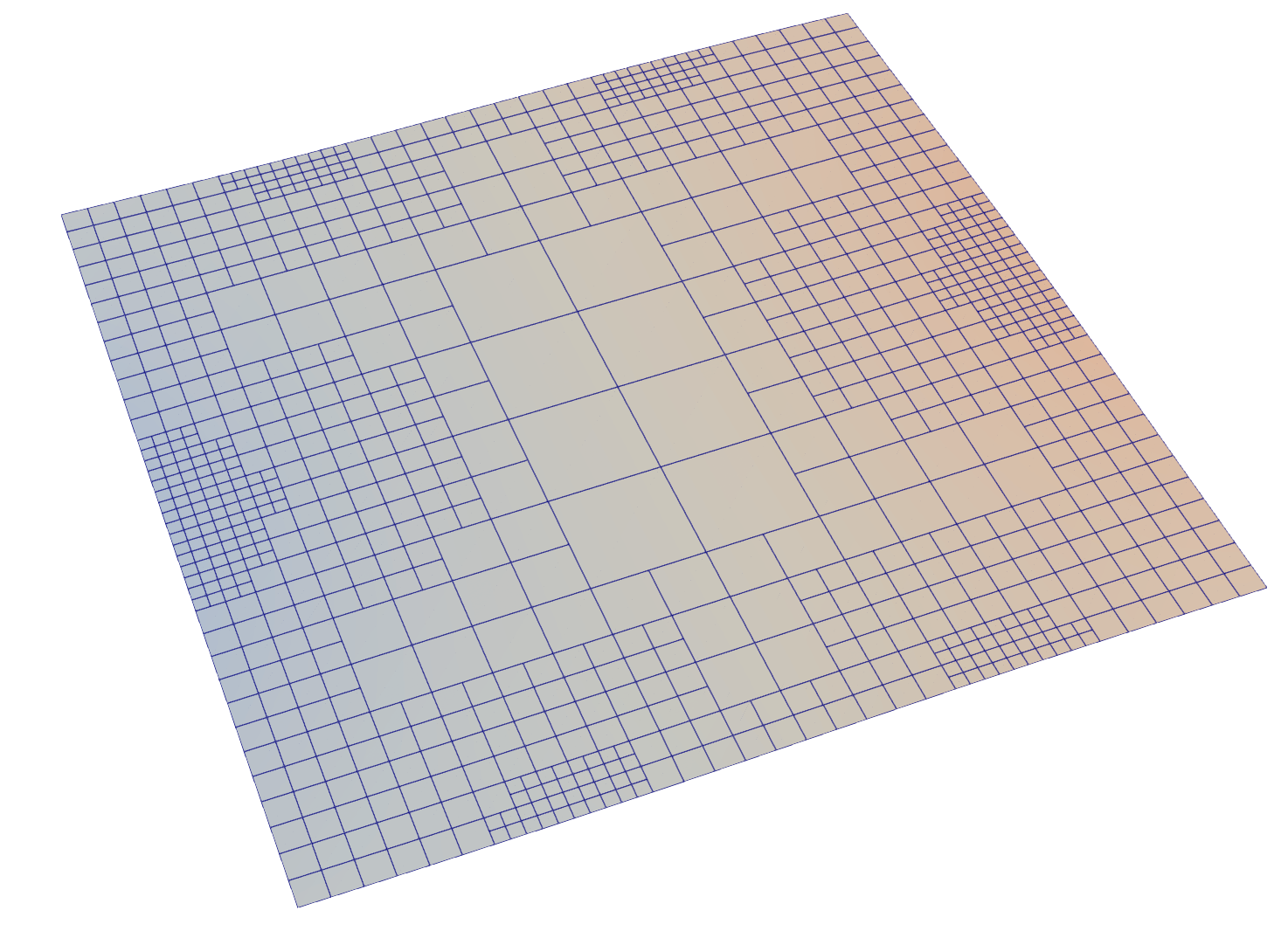}
    \hfill
    \includegraphics[width=.45\textwidth]{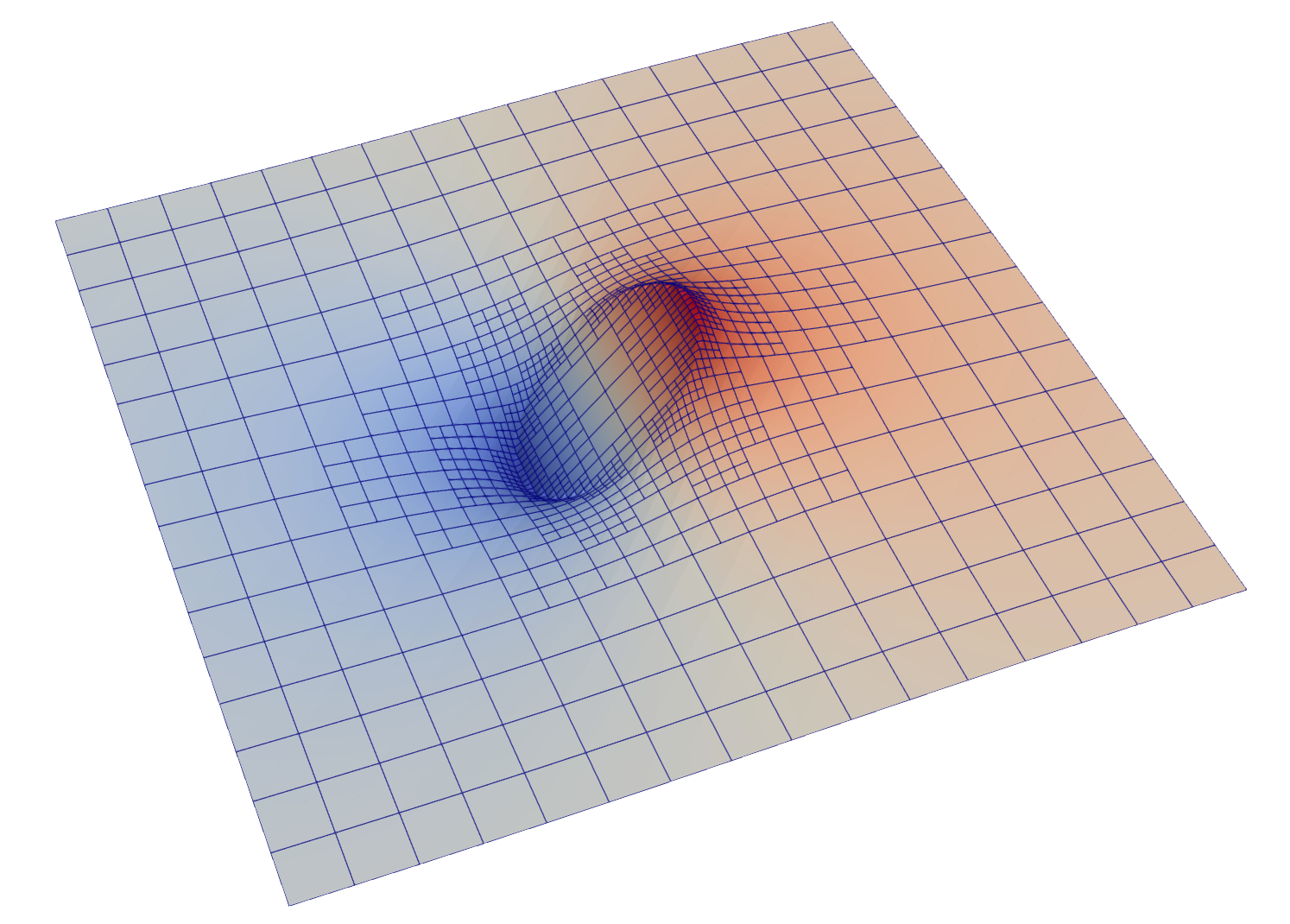}
    \caption{Example for a numerical solution where the dimensionality reduction error is significant, using a single Fourier mode (left) and two fourier modes (right), for problem \eqref{eq:manufactured-solution-D2}, when $r=0.2$.}
    \label{fig:fundamental_D2}
\end{figure}

When using a Fourier extension with a single Fourier mode, the numerical scheme fails to capture the solution (which has zero average on the boundary of the vessel) (see Figure~\ref{fig:fundamental_D2} left). At least two Fourier modes (three cylindrical harmonics in total) are required to obtain an acceptable solution (see Figure~\ref{fig:fundamental_D2} right). 

This example is extremely relevant and illustrative for the applications of this method.
In fact, the solution reminds of a particle exposed to a shear flow field. On one side the inclusion is subject to forces in one direction and on the other side to the opposite direction. 
This test case clearly shows that the approximation of the interface conditions with only one Fourier mode (i.e. $n=0,N=1$) would not be sufficient to model the rotation of the particle. The approximation based on three modes ($n=1,N=3$) would completely resolve this issue.

\begin{figure}
    \includegraphics[width=.45\textwidth]{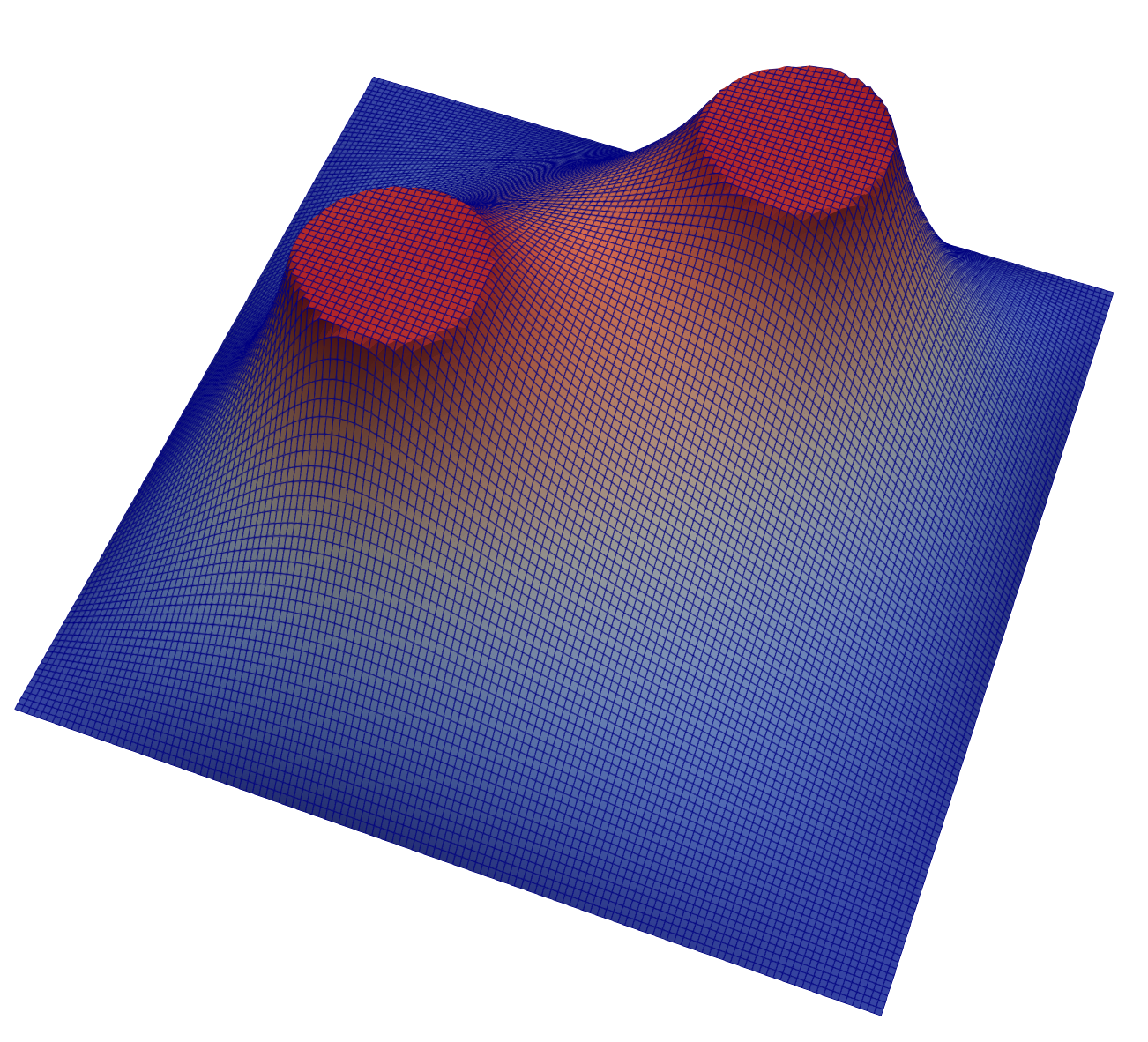}
    \hfill
    \includegraphics[width=.45\textwidth]{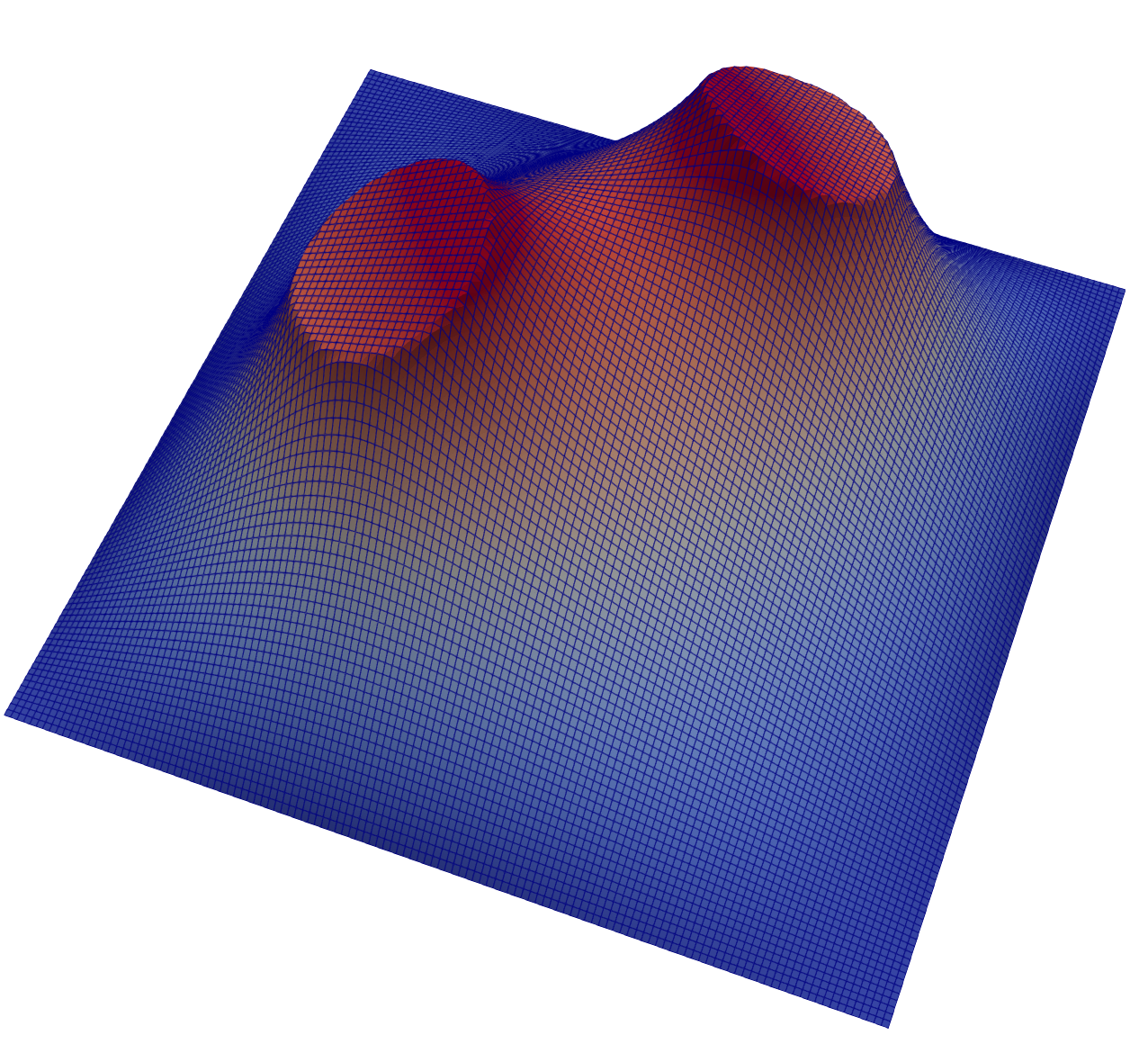}

    \includegraphics[width=.45\textwidth]{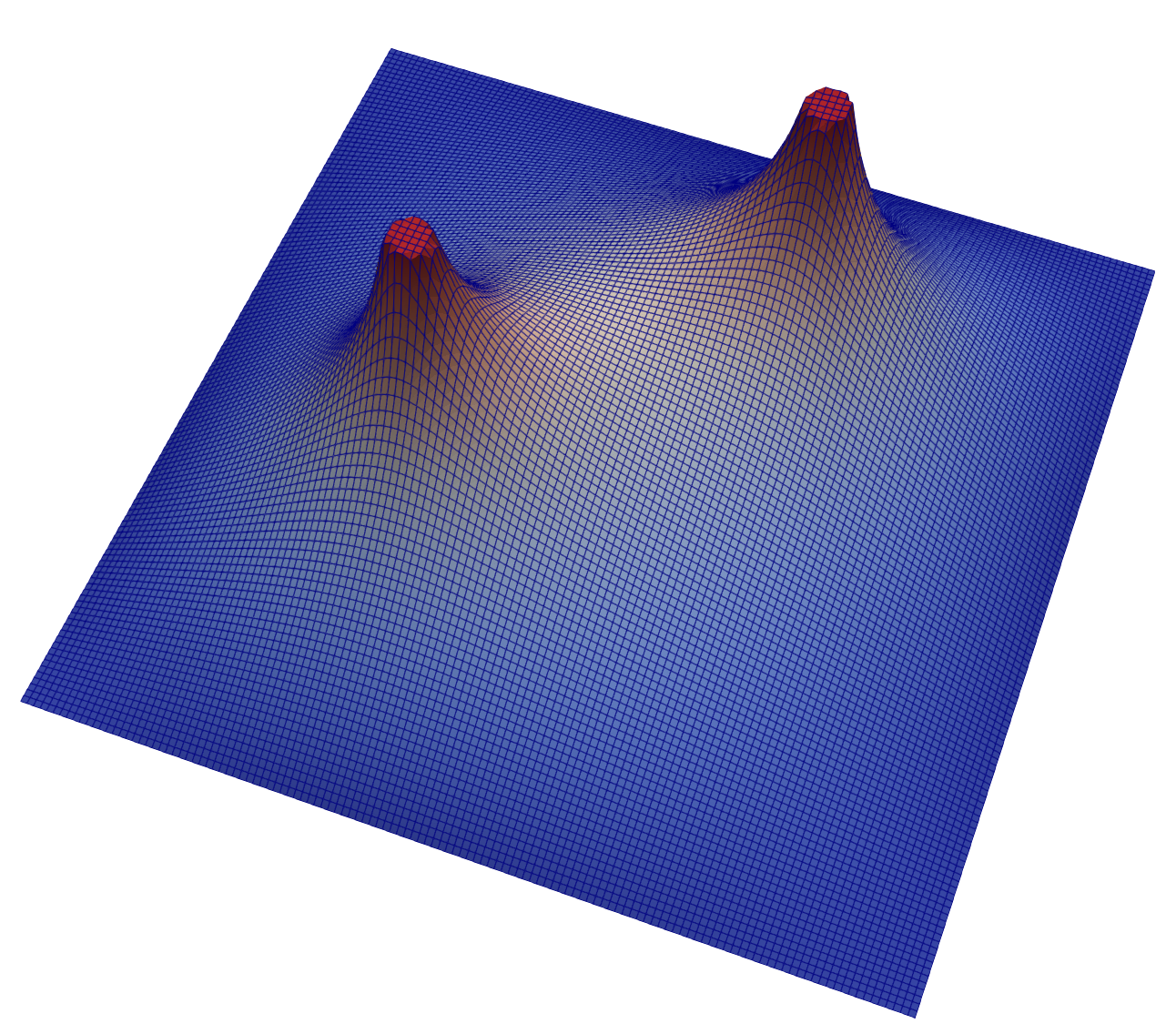}
    \hfill
    \includegraphics[width=.45\textwidth]{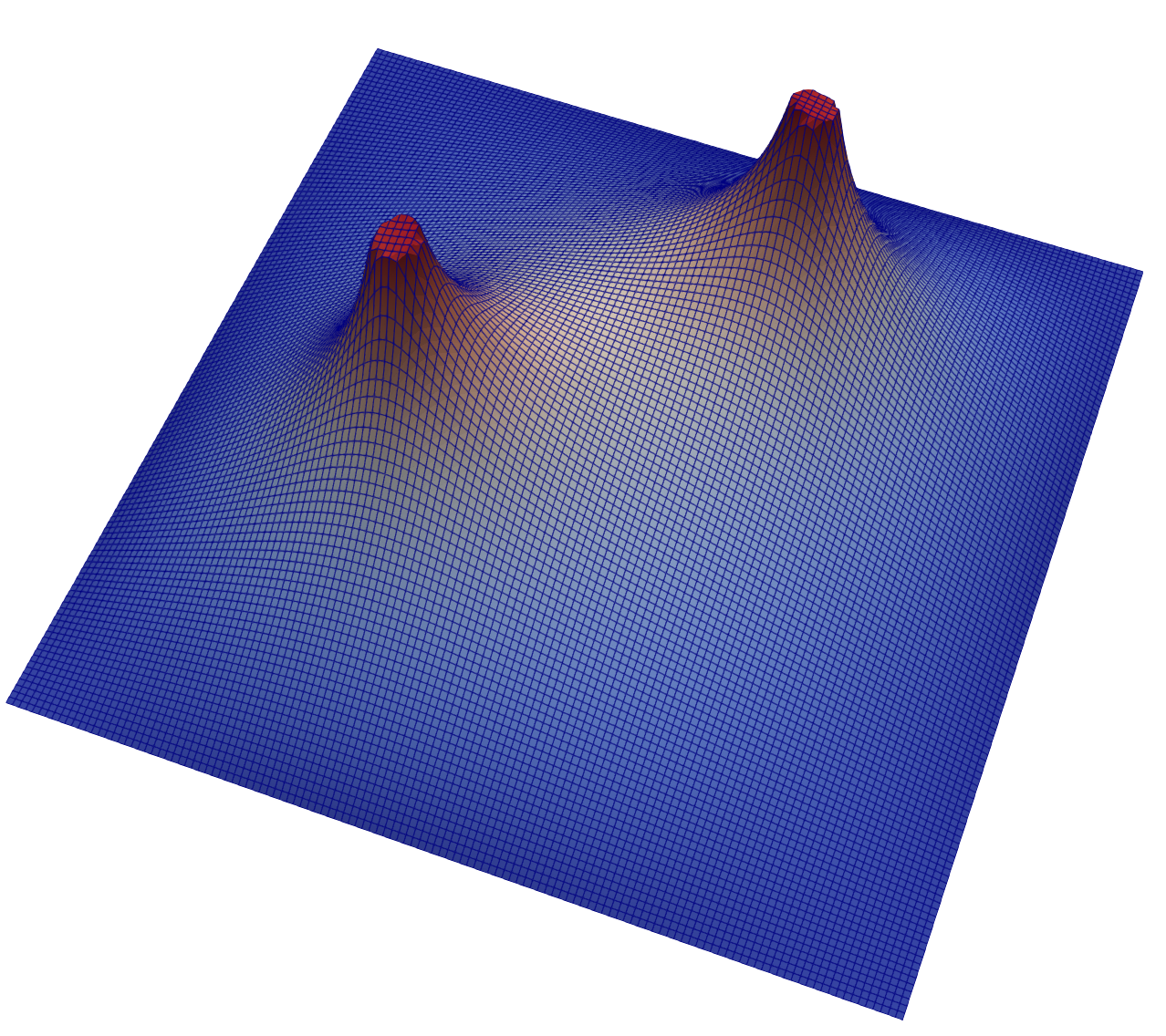}

    \includegraphics[width=.45\textwidth]{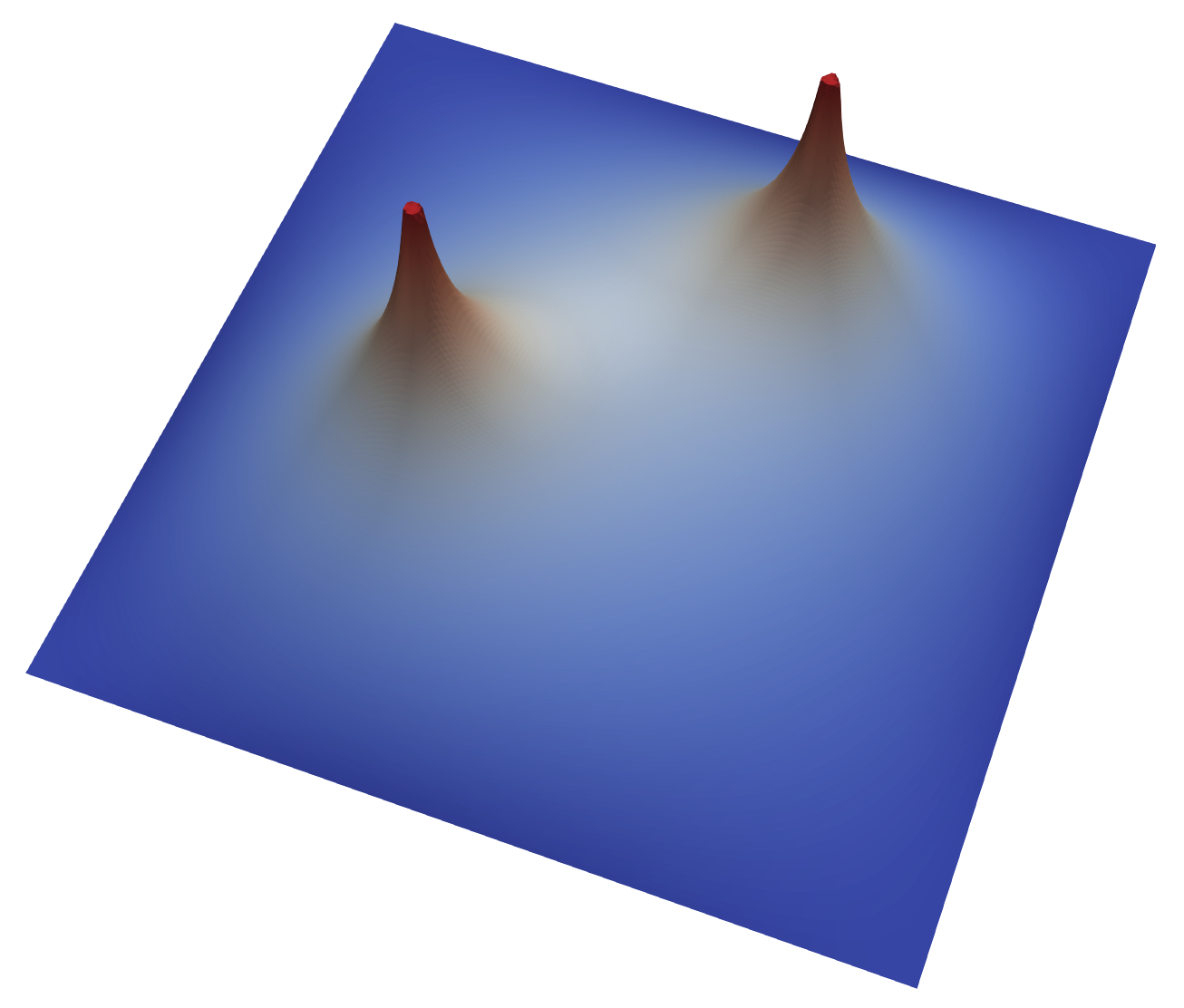}
    \hfill
    \includegraphics[width=.45\textwidth]{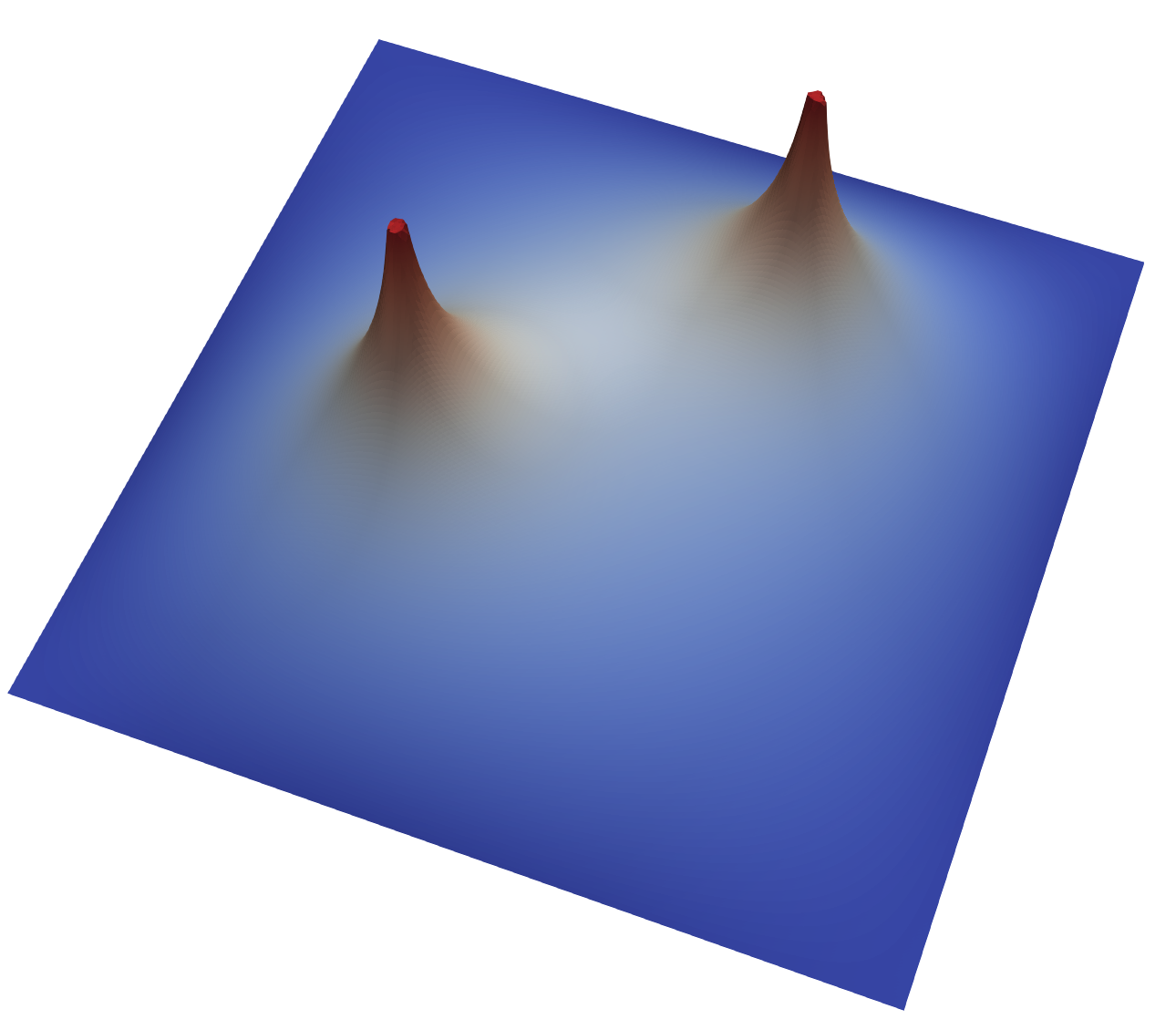}

    \caption{two inclusions with constant boundary condition. Full Lagrange multiplier solution (left) VS single Fourier mode solution (right), for decreasing inclusion radius $r=0.2$ (top), $r=0.1$ (center), and $r=0.05$ (bottom).}
    \label{fig:two_inclusions}
\end{figure}

\paragraph{$\epsilon$-convergence}
In Figure~\ref{fig:two_inclusions}, where we replicate in spirit the example of problem~\eqref{eq:manufactured-solution-D1} for two inclusions with different radii. We impose $u=1$ on the boundary of the inclusions, and $u=0$ on the boundary of the domain $\Omega$. This setting reflects more closely what happens in realistic scenarios, where the boundary conditions on the vessels are dictated by the solution of auxiliary problems solved in dimension one, and extended (constantly) on $\Gamma$.

Recalling that the Lagrange multiplier represents here the jump of the gradients at the interface, we see that while one Fourier mode (i.e. $n=0,N=1)$ would suffice to represent exactly the solution, it fails to capture the Lagrange multiplier when the radius of the vessel is non-negligible, leading to a solution where only the average is equal to the desired value on $\Gamma$. For this particular case, a solution obtained with five Fourier modes (not shown here) is indistinguishable from the full order solution (see Figure~\ref{fig:two_inclusions} top left). For smaller inclusions (see Figure~\ref{fig:two_inclusions} center right and bottom right), the solution obtained with a single Fourier mode is significantly closer to the full order solution.

\paragraph{The combined effect of $h,n,\epsilon$}
The interplay of the three parameters $h,n,\epsilon$ is studied rigorously in Figure~\ref{fig:fundamental_D3_error} and~\ref{fig:fundamental_D3_error_ref} for a single inclusion of variable size with a non trivial solution. In particular, we solve the following problem:
\begin{equation}
    \label{eq:manufactured-solution-D3}
    \begin{aligned}
        -\Delta u =&  0 && \mathrm{in} \ \Omega\setminus \Gamma \equiv [-1,1]^2\setminus \partial B_{\epsilon}(0),\\
        u =& 2 x^{3} - x^{2} - 6 x y^{2} + x + y^{2} + 1 && \mathrm{on} \ \Gamma \equiv \partial B_{\epsilon}(0), \\
        u = &  \frac{2 \epsilon^{6} x \left(x^{2} - 3 y^{2}\right)}{\left(x^{2} + y^{2}\right)^{3}} + \frac{\epsilon^{4} \left(- x^{2} + y^{2}\right)}{\left(x^{2} + y^{2}\right)^{2}} \\
        &+ \frac{\epsilon^{2} x}{x^{2} + y^{2}}+ \frac{\log{\left(x^{2} + y^{2} \right)}}{2 \log{\left(\epsilon \right)}} && \mathrm{on} \ \partial \Omega, \\
    \end{aligned}
\end{equation}
where the expression of the boundary conditions on $\Gamma$ and on $\partial \Omega$ coincide with non trivial harmonic solutions of both the interior and the exterior problems. We solve the problem for $\epsilon=\{0.2, 0.1, 0.05, 0.025\}$ with varying mesh size $h=2/(2^i)$, for $i=\{6,7,8,9,10\}$, and we compare the $L^2$ and $H^1$ errors (with respect to the available exact solution) in the solution obtained with a variable number of Fourier modes $N=1,3,5,7,9, (n=0,1,2,3,4)$. The results are shown in Figures~\ref{fig:fundamental_D3_error} and~\ref{fig:fundamental_D3_error_ref}, where we (partially) confirm numerically the estimate presented in the final error estimates \eqref{eq:mainerrorestimate1}-\eqref{eq:mainerrorestimate2}.



\begin{figure}[ht]
    \includegraphics{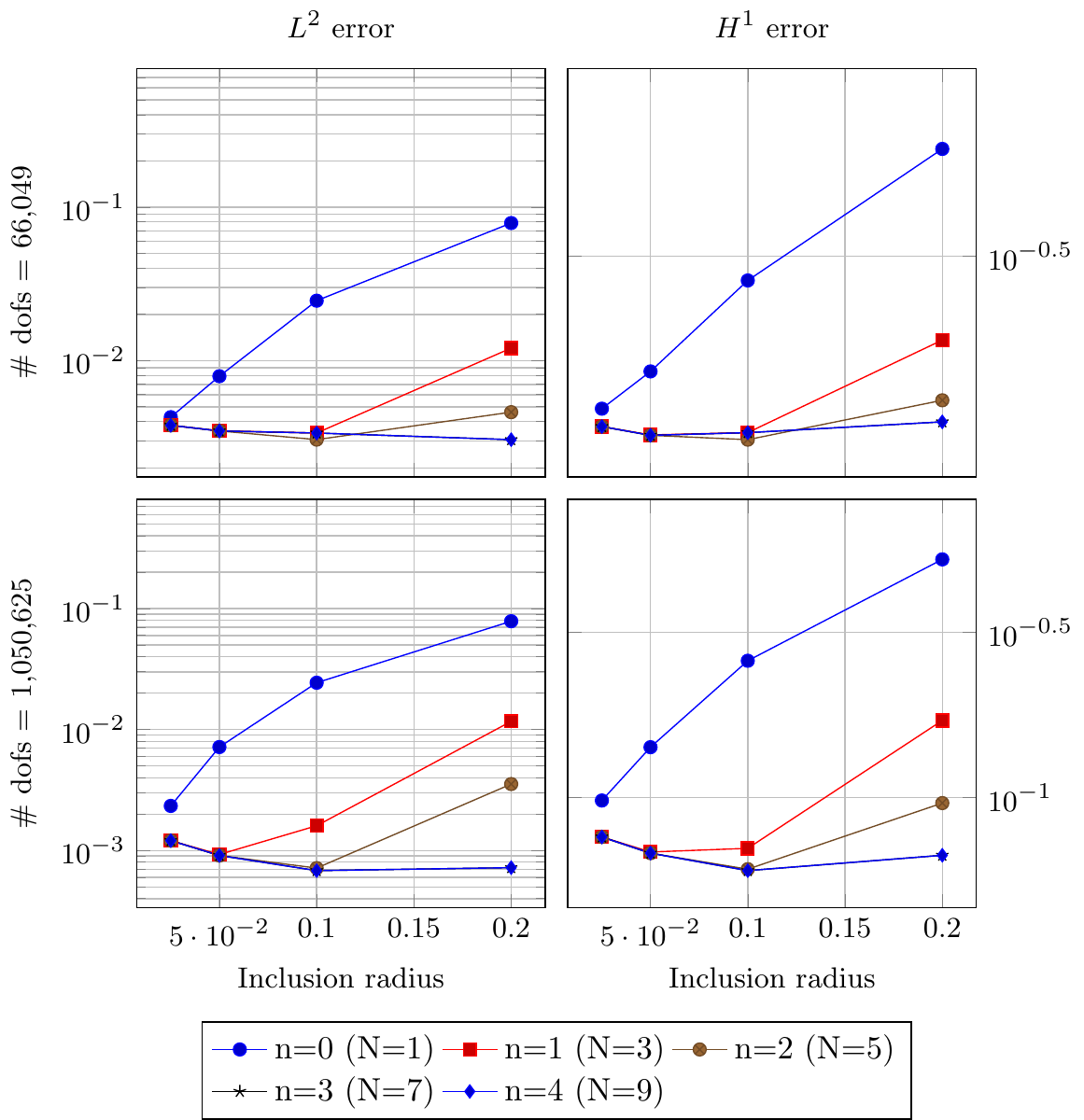}

    \caption{$L^2$ error (left column) and $H^1$ error (right column) in the solution of problem~\eqref{eq:manufactured-solution-D3} for different values of the radius $r$ and different number of Fourier modes $N$, in two different refinements: \# dofs = 66,049 (top) and \# dofs = 1,050,625 (bottom).}
    \label{fig:fundamental_D3_error}
\end{figure}





\begin{figure}[!ht]
    \includegraphics{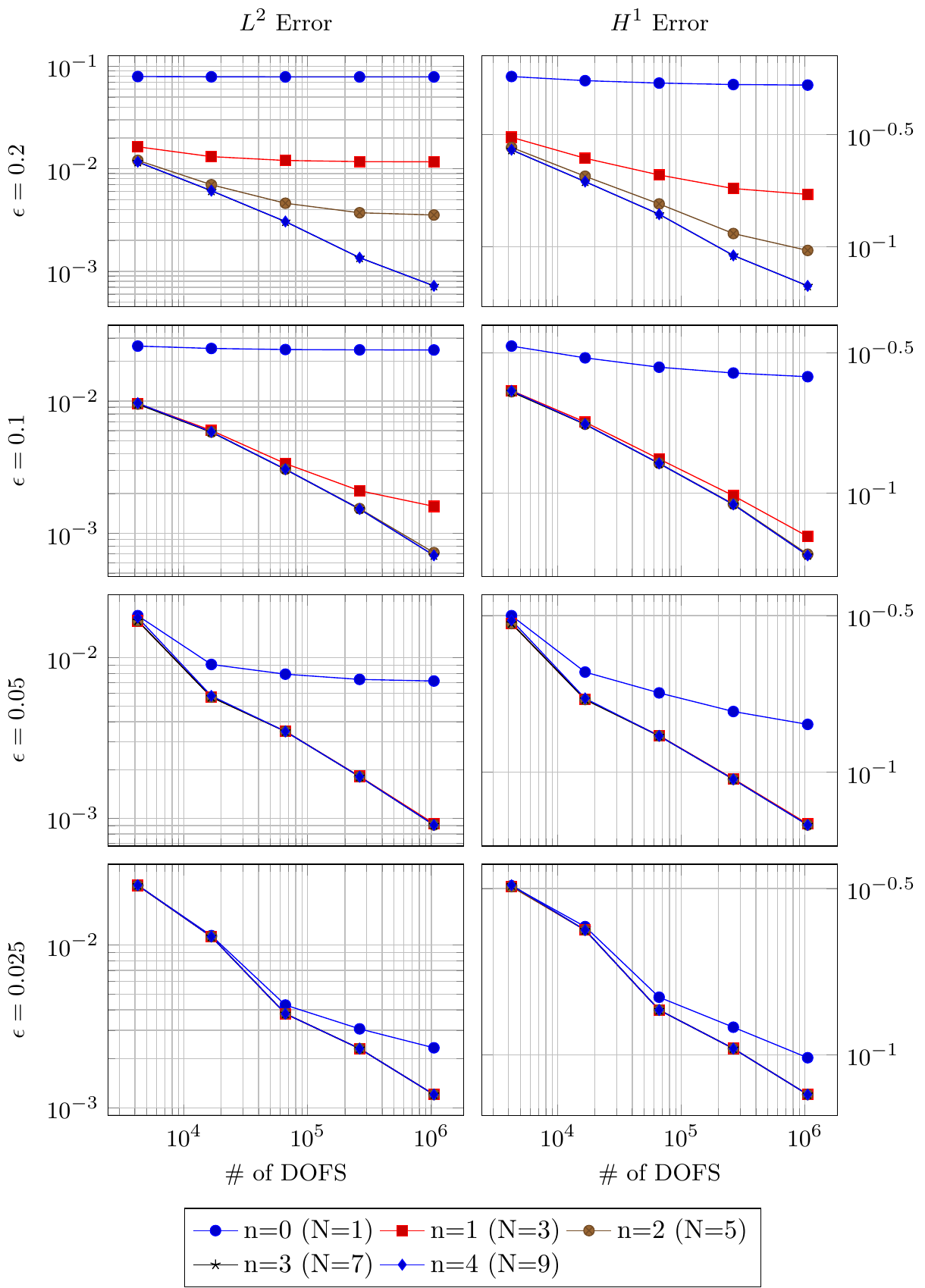}

    \caption{$L^2$ error (left column) and $H^1$ error (right column) with respect to the number of degrees of freedom in the solution of problem~\eqref{eq:manufactured-solution-D3} for different values of the radius $r$ and different number of Fourier modes $N$.}
    \label{fig:fundamental_D3_error_ref} 
\end{figure}   

\begin{figure}
    \includegraphics[width=.3\textwidth]{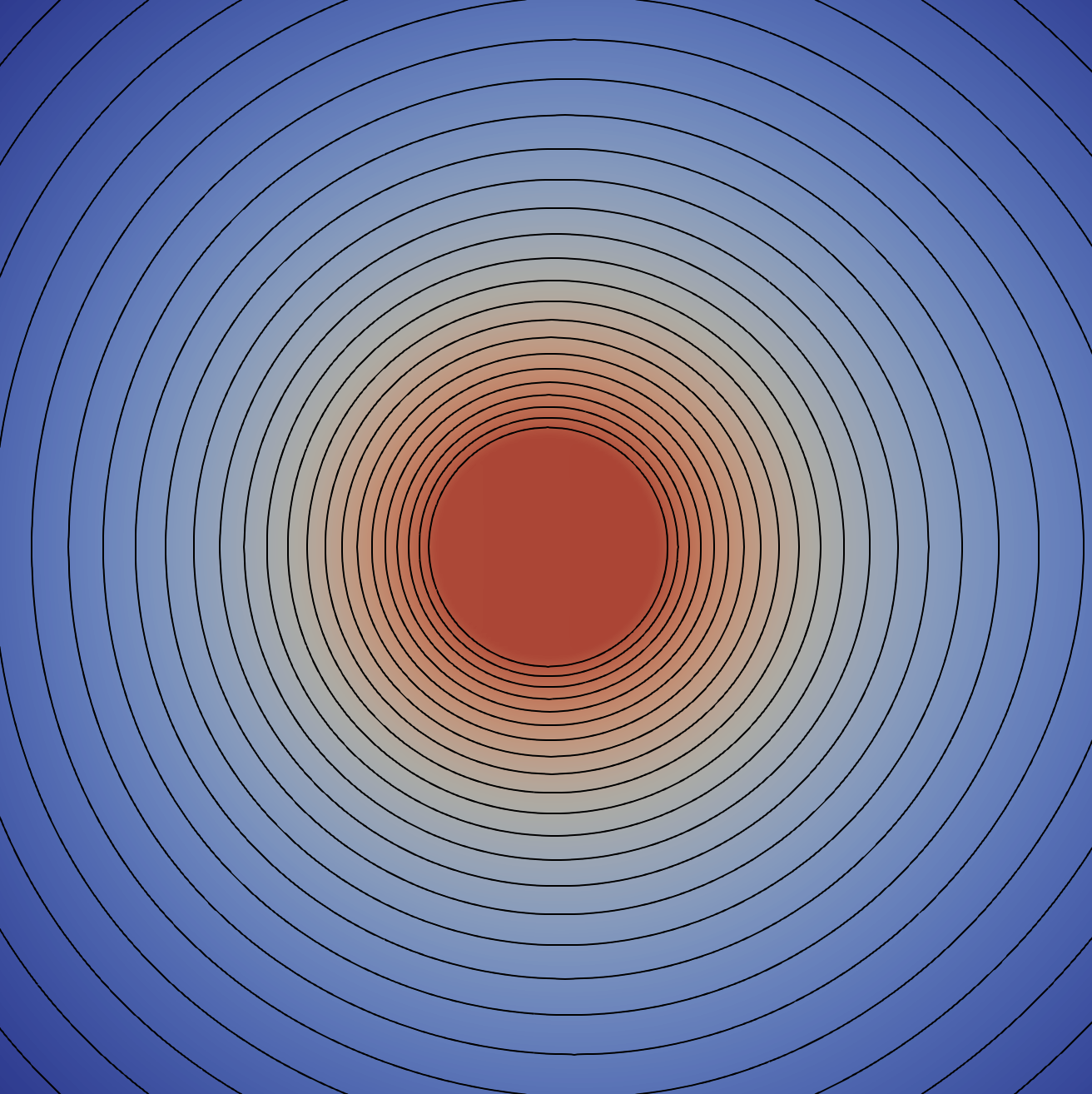}
    \hfill 
    \includegraphics[width=.3\textwidth]{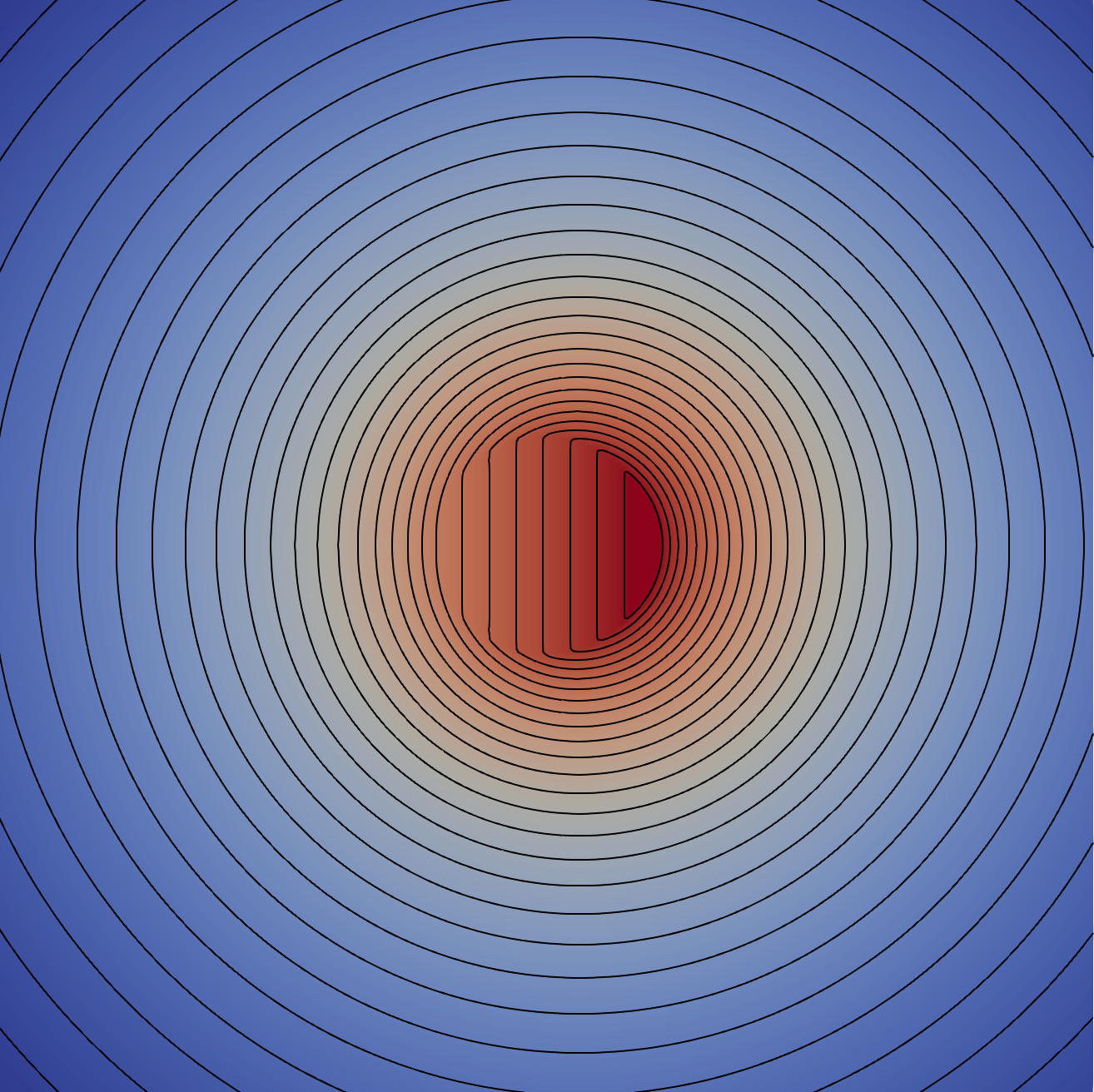}
    \hfill 
    \includegraphics[width=.3\textwidth]{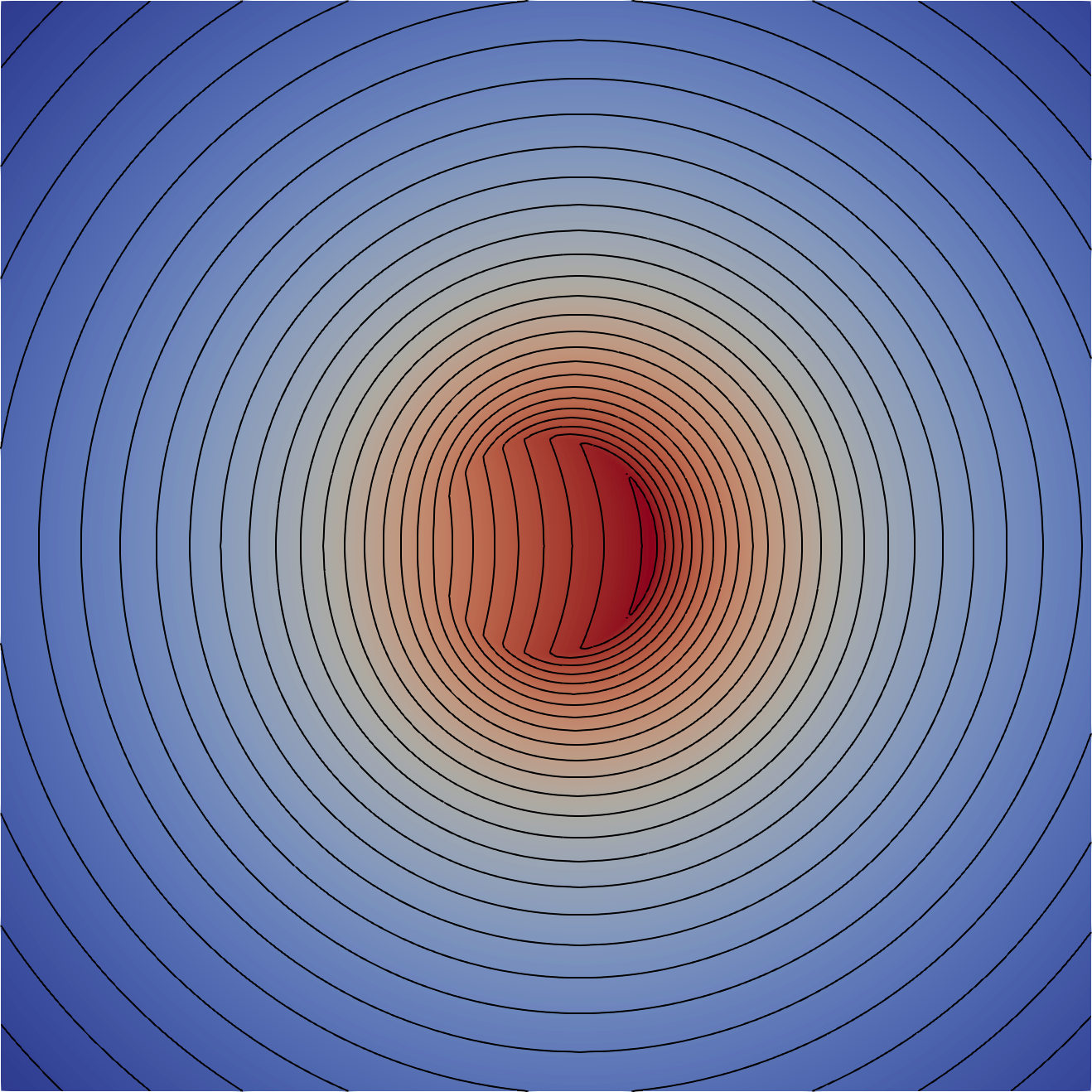}

    \vspace{.05\textwidth}
    
    \includegraphics[width=.3\textwidth]{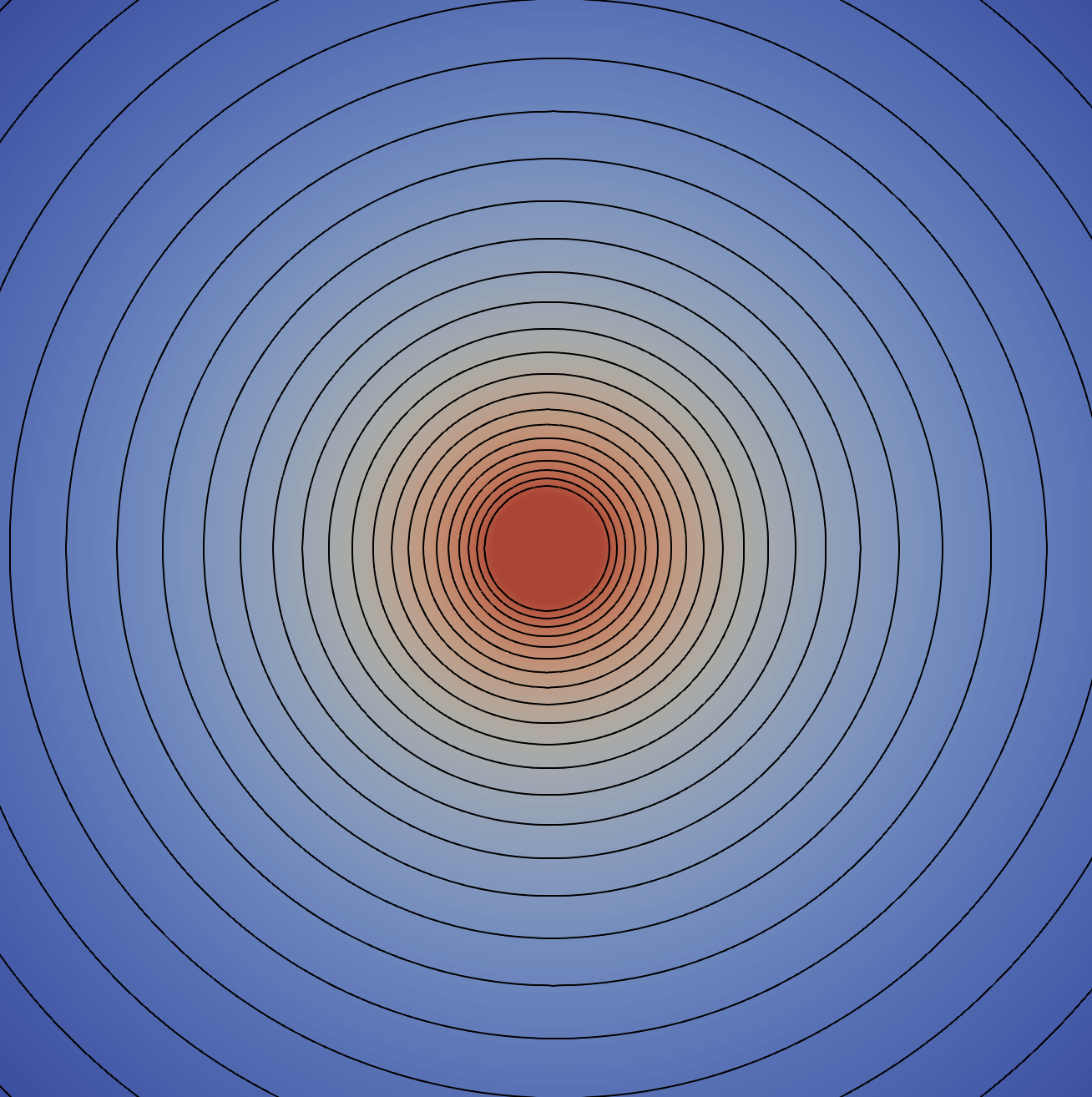}
    \hfill 
    \includegraphics[width=.3\textwidth]{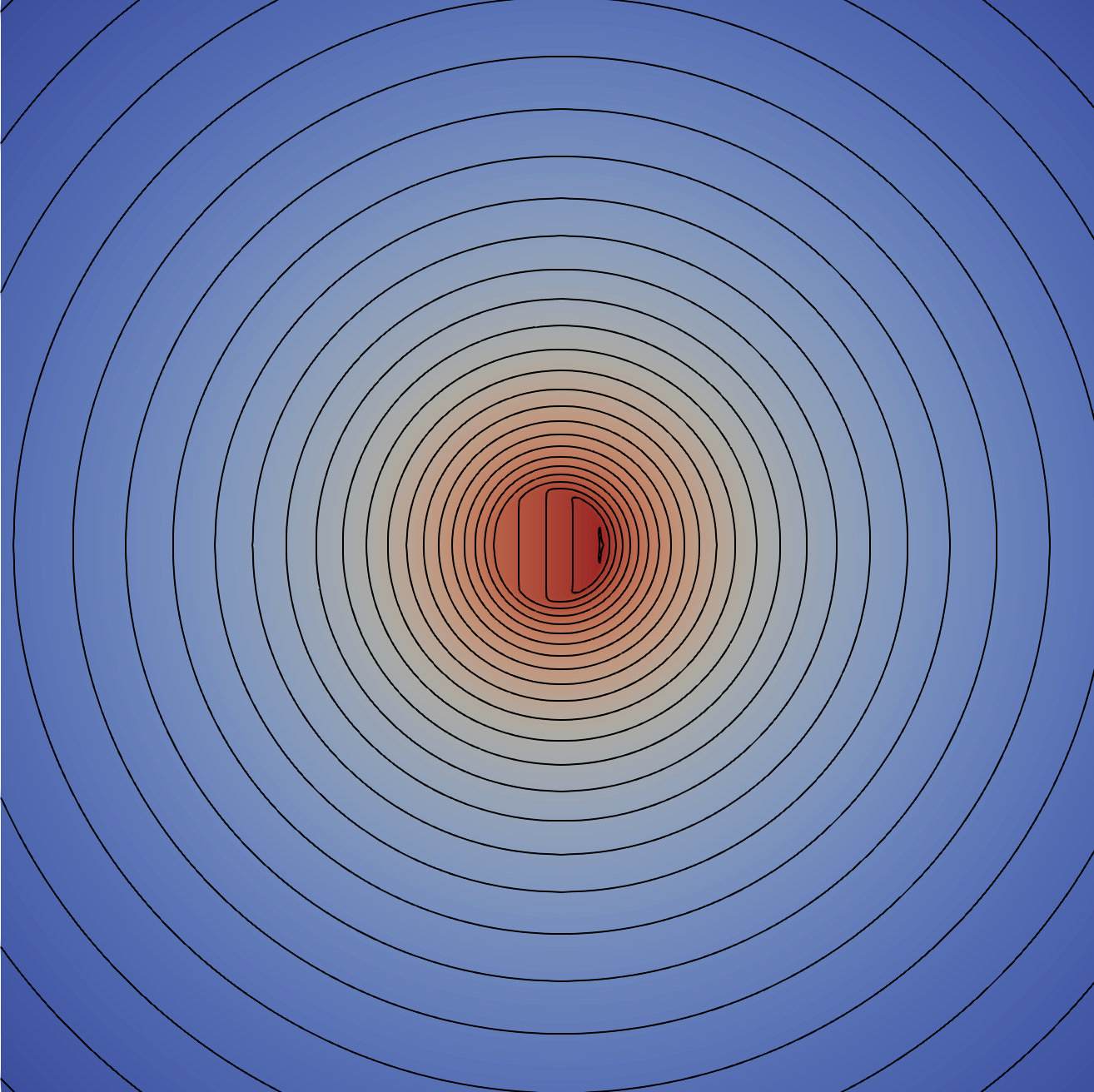}
    \hfill 
    \includegraphics[width=.3\textwidth]{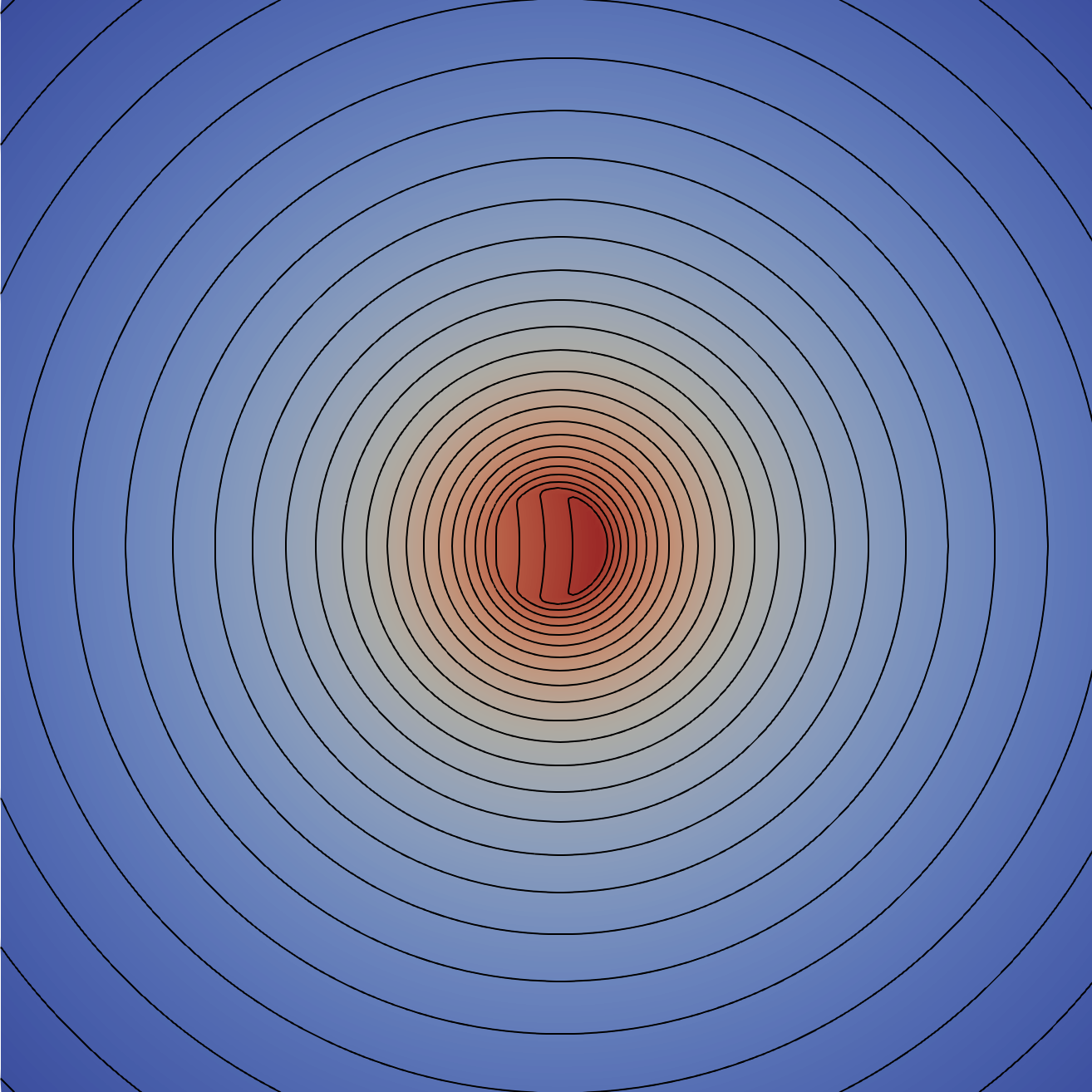}

    \vspace{.05\textwidth}

    \includegraphics[width=.3\textwidth]{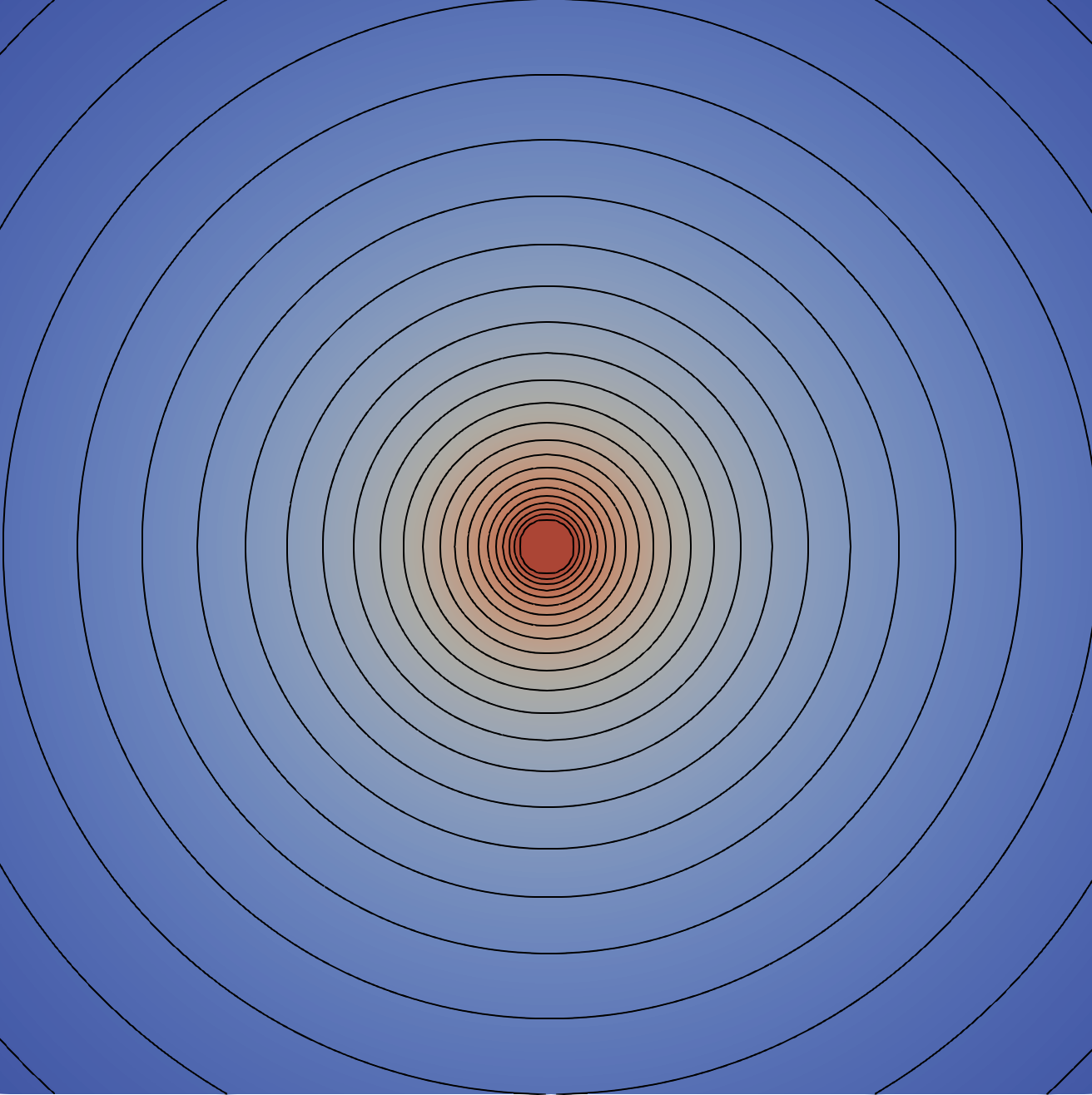}
    \hfill 
    \includegraphics[width=.3\textwidth]{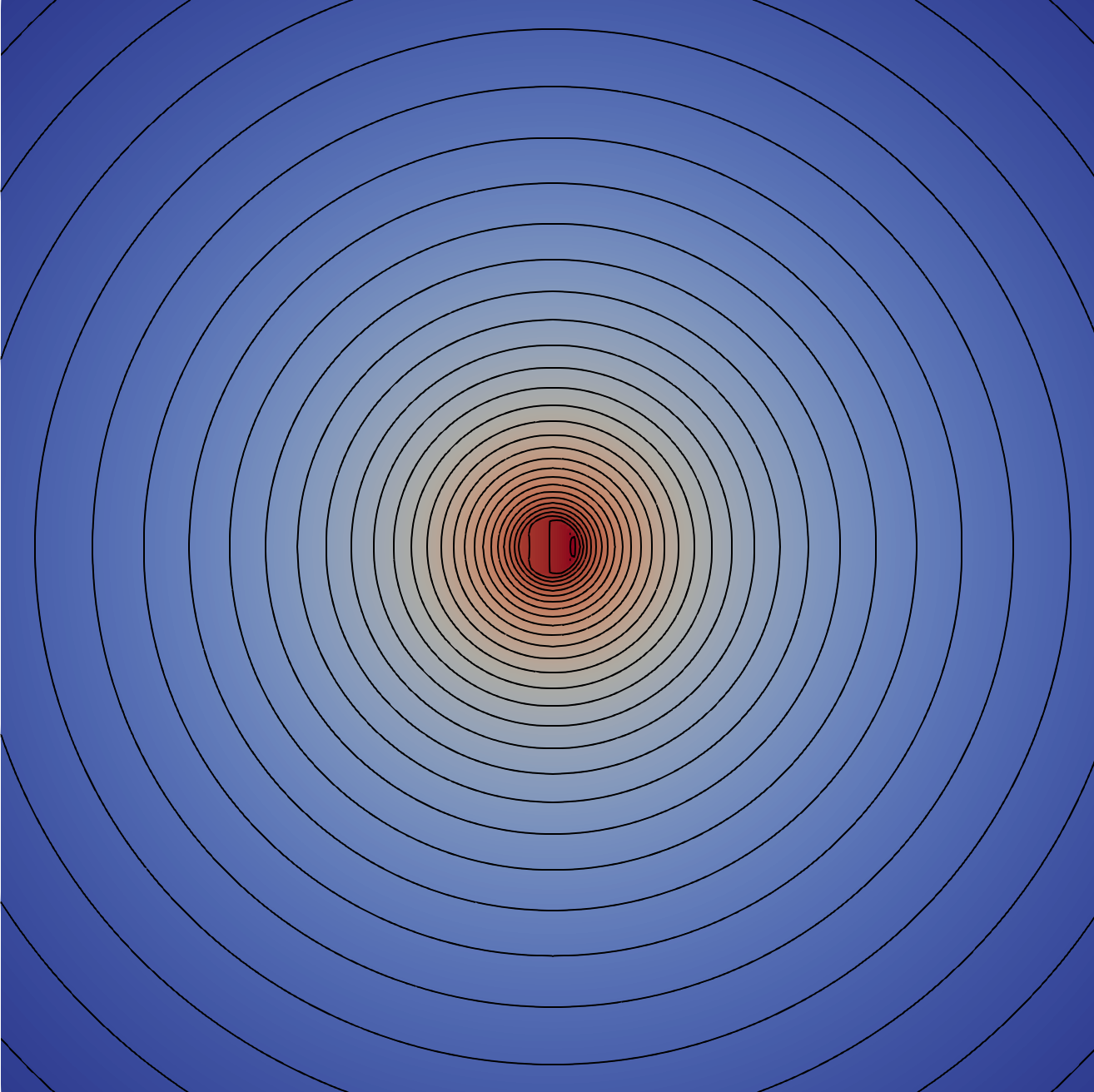}
    \hfill 
    \includegraphics[width=.3\textwidth]{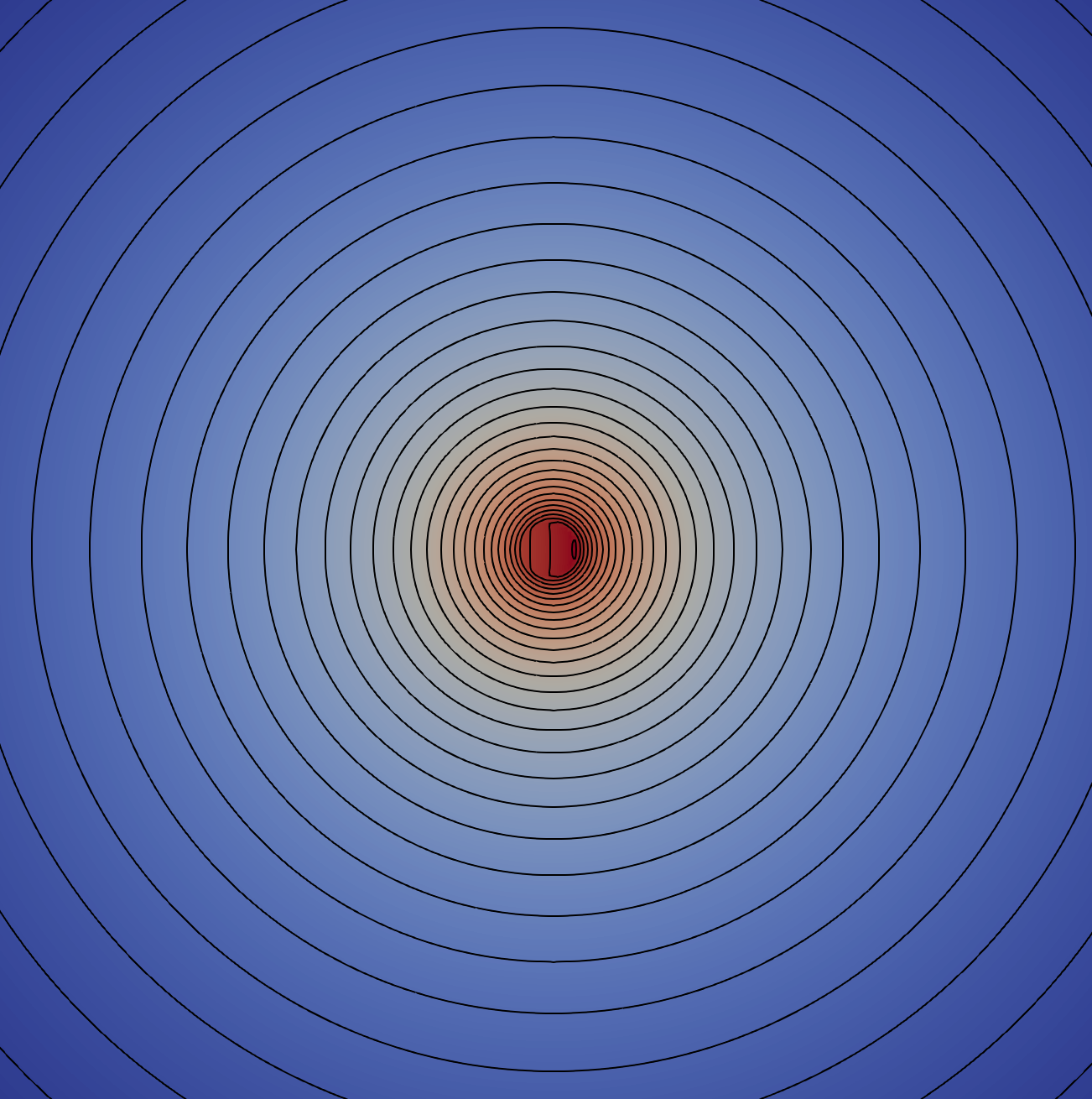}
    
    \caption{Contour plot of the numerical solution of problem~\eqref{eq:manufactured-solution-D3} for different values of the radius $r$ ($r=0.2$ top, $r=0.1$ center, and $r=0.05$ bottom) and different number of Fourier modes $N$ ($N=1$ left, $N=2$ center, and $N=3$ right).}
    \label{fig:fundamental_D3}
\end{figure}

In the top row of Figure~\ref{fig:fundamental_D3_error}, obtained for fixed $h=2/2^8$ and variable $\epsilon \in [5\cdot 10^{-2}, 0,1]$, the numerical discretization error (proportional to $h$) dominates over the dimensionality reduction error for any $n>0$. Only for $n=0$ we observe the expected linear decay of the whole error with $\epsilon$, in agreement with \eqref{eq:mainerrorestimate1}. Conversely, the error plot for $n=4$ is almost flat, confirming that the dimensionality reduction error is negligible in this case, if compared to the numerical approximation one.

The scenario of the bottom row of Figure~\ref{fig:fundamental_D3_error}, obtained using $h=2/2^{12}$, is more interesting. At least for the interval $\epsilon \in [0.1,0.2]$ we see that there is a clear decay of the whole error with $\epsilon$, confirming that in this regime the dimensionality reduction error is larger than the numerical approximation one. Interestingly, and in agreement with \eqref{eq:mainerrorestimate1}-\eqref{eq:mainerrorestimate2}, we see the effect of the parameter $n$ in the error decay rate. Precisely, the decay for $n=1$ is larger than the one observed with $n=0$.

The analysis at different levels of refinement (i.e. $h$-convergence) is shown in
Figure~\ref{fig:fundamental_D3_error_ref}. There, we highlight the transition between two main regimes. For values of $\epsilon \in [0.1,0.2]$ and $n=0,1$ the $h$-convergence of the scheme is heavily polluted by the dimensionality reduction error, as predicted by the theory. Conversely, for all radii and for $n \geq 3$ the effect of the dimensionality reduction error disappears, in fact we converge to the full-order solution (not shown here, but indistinguishable up to the sixth digit of accuracy from the solutions with $n=3$ and $n=4$). When the radius decreases, the number of modes that are necessary to achieve the same accuracy decreases -- as predicted -- and in particular we observe that all error curves tend to overlap as $\epsilon\to 0$ and all of them exhibit the optimal $h$-convergence rate of the scheme.

Overall, these tests show that the proposed approach offers full control on the dimensionality reduction error and numerical approximation error, allowing us to optimally balance these two components in the different scenarios where the inclusion is fully resolved or not resolved by the computational mesh.

\subsection{Three dimensional examples}
\label{sec:numerics-dirichlet-3d}

In this section we present some numerical results for the Dirichlet problem~\eqref{eq:inclusions-dirichlet-3d} in three dimensions. We start with a qualitative test that mimics the two inclusions problem presented in Figure~\ref{fig:two_inclusions}. We set $\Omega \equiv [-1,1]^3$, and choose $V$ composed of three non-aligned cylinders of varying radii $r$ and height $0.5$, as shown in Figure~\ref{fig:domain} for $r=0.2$: 
\begin{equation}
    \label{eq:inclusions-dirichlet-3d}
    \begin{aligned}
        -\Delta u =&  0 && \mathrm{in} \ \Omega\setminus \Gamma \equiv [-1,1]^3\setminus \Gamma,\\
        u =& 1 && \mathrm{on} \ \Gamma, \\
        u = &  0 && \mathrm{on} \ \partial \Omega. \\
    \end{aligned}
\end{equation}

In this case we do not have access to the exact solution, however we can provide a qualitative analysis of the numerical results by observing Figure~\ref{fig:results-3-inclusions}. 

\begin{figure}

    \includegraphics[width=.44\textwidth]{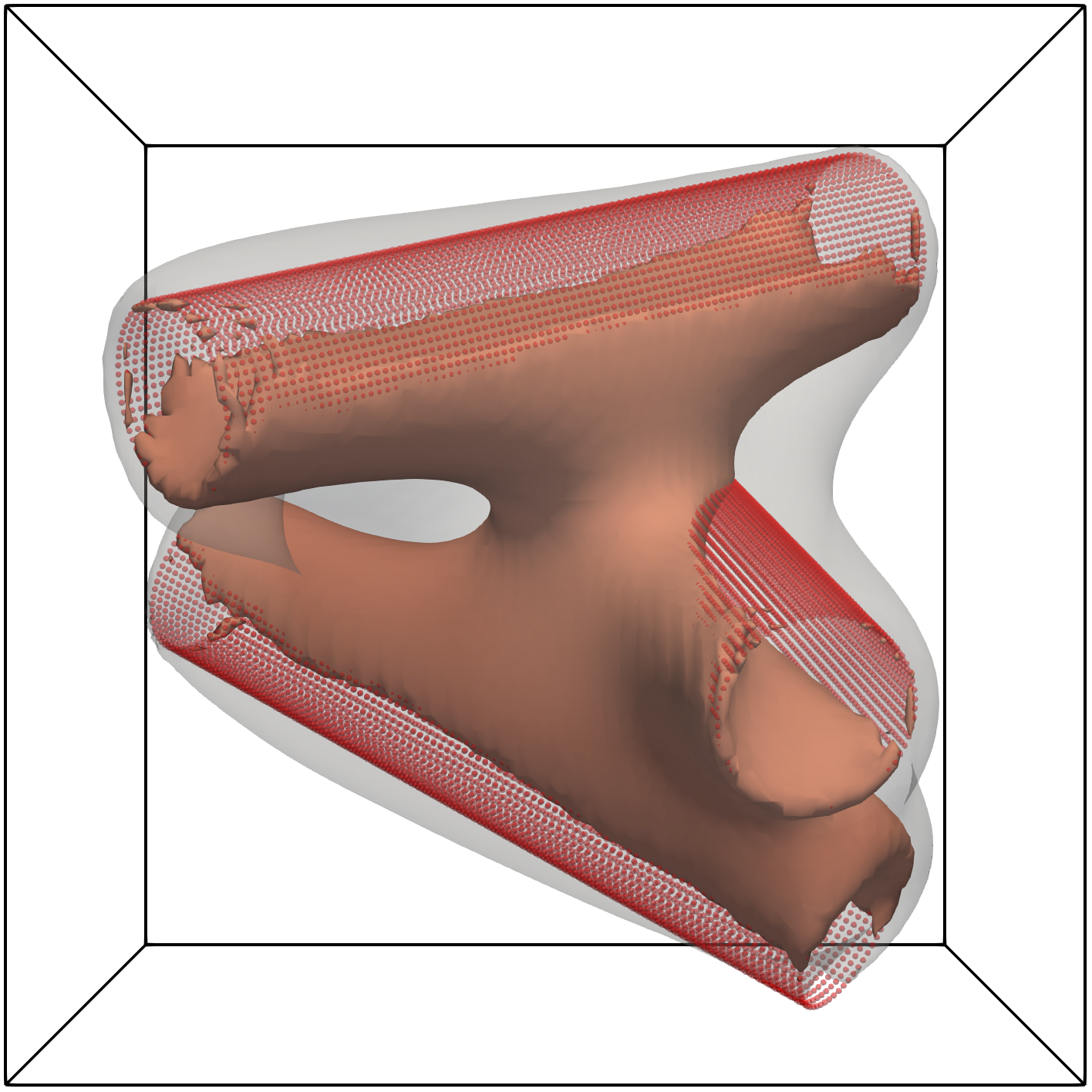}
    \hfill
    \includegraphics[width=.44\textwidth]{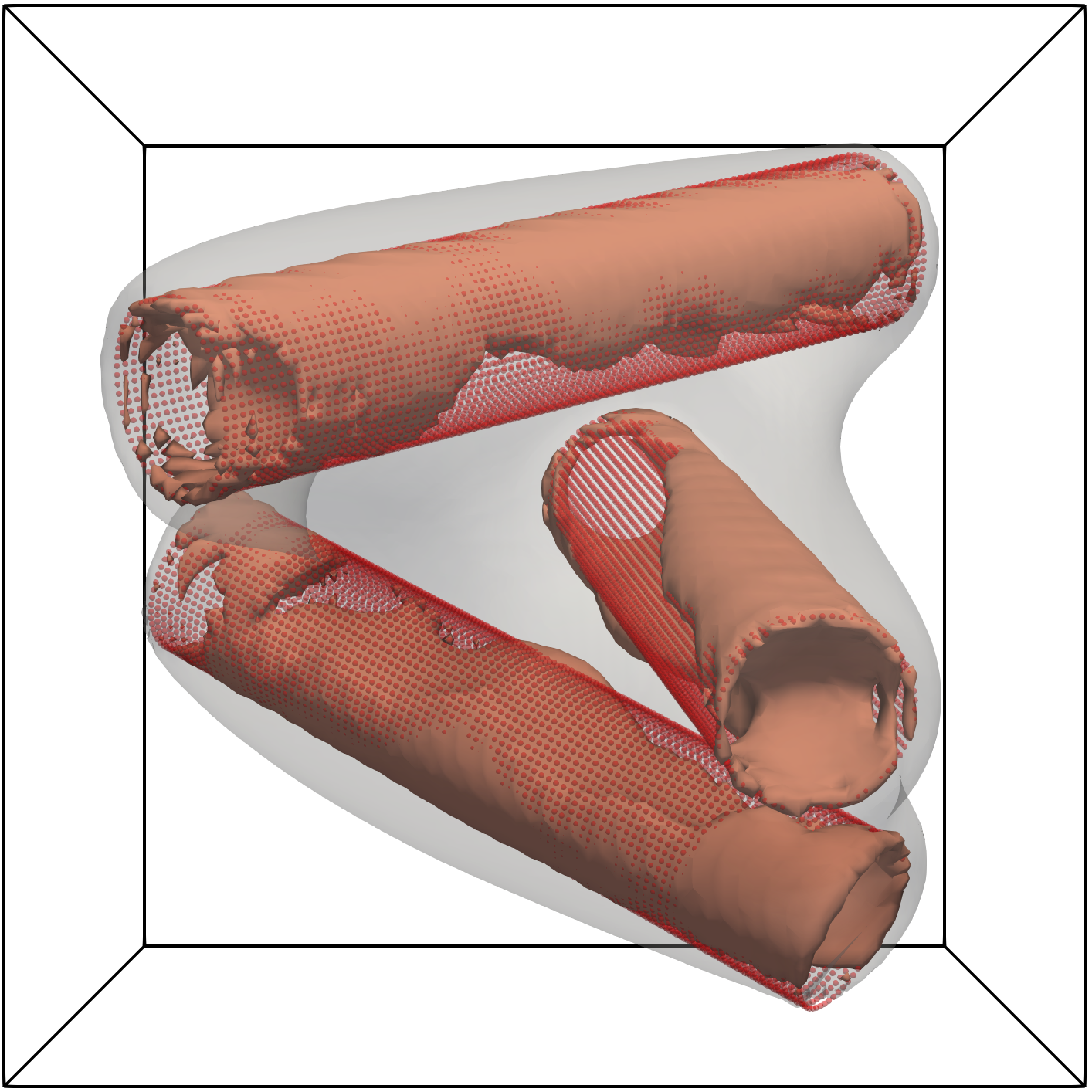}

    \includegraphics[width=.44\textwidth]{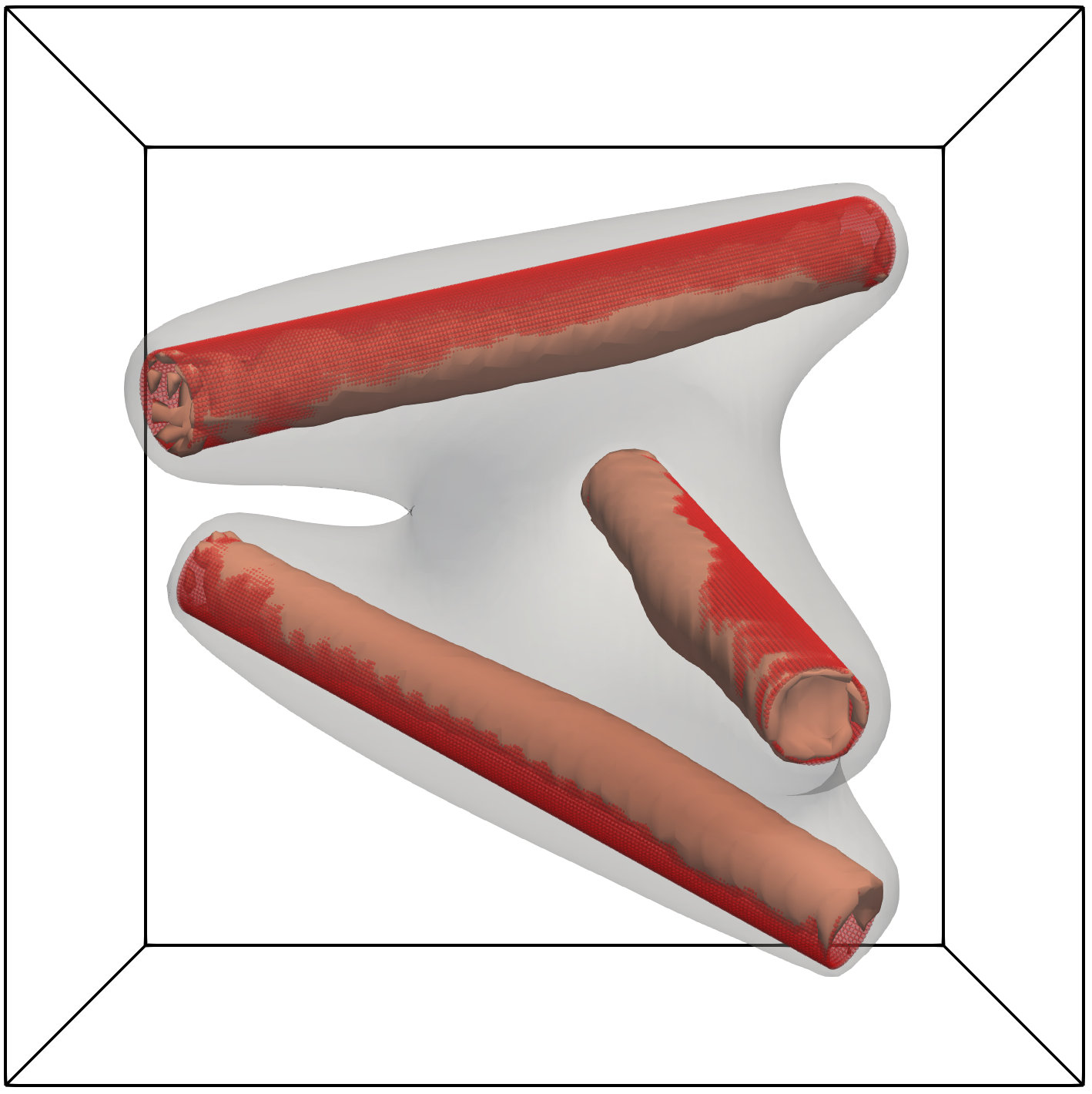}
    \hfill
    \includegraphics[width=.44\textwidth]{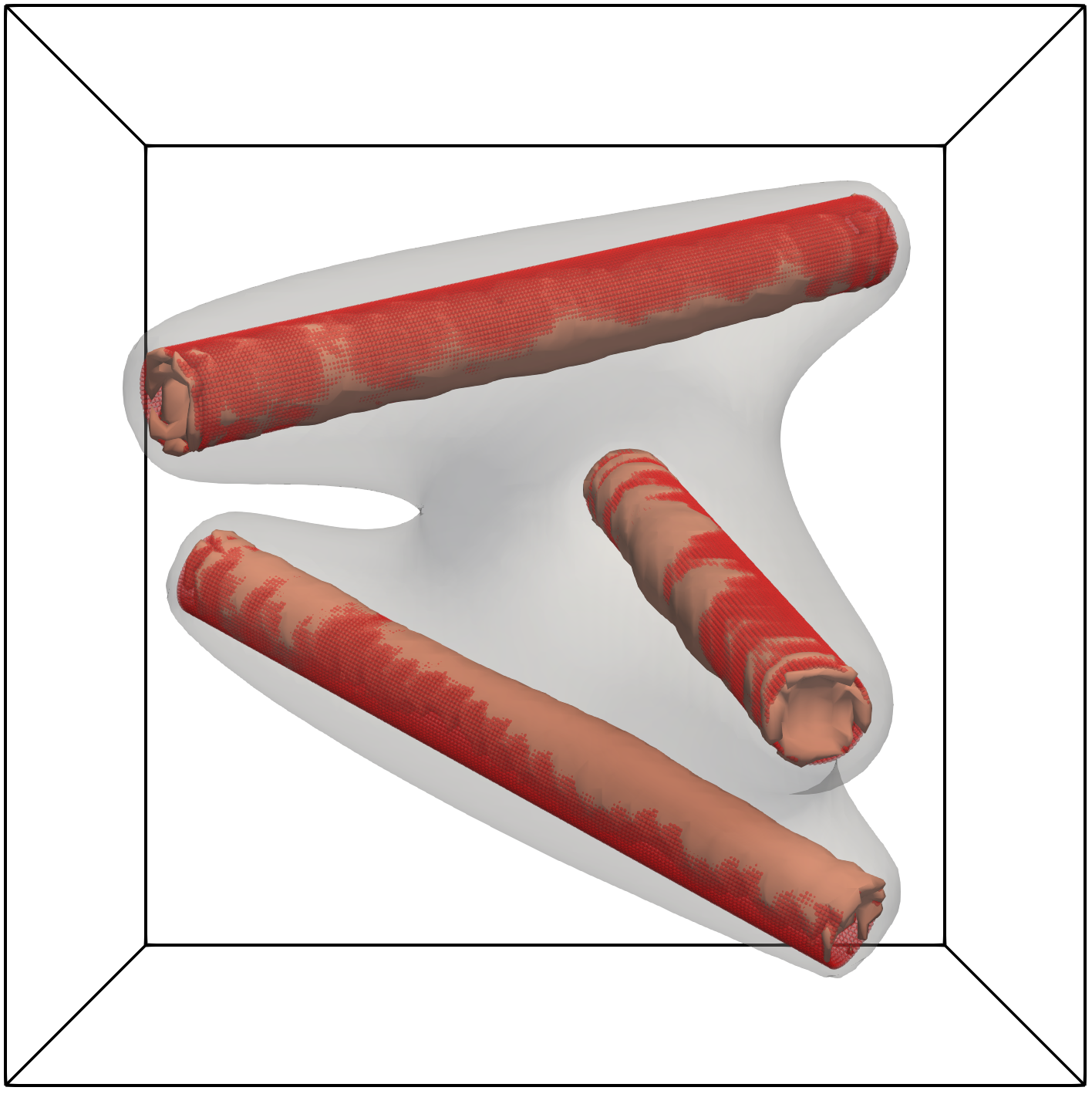}

    \includegraphics[width=.44\textwidth]{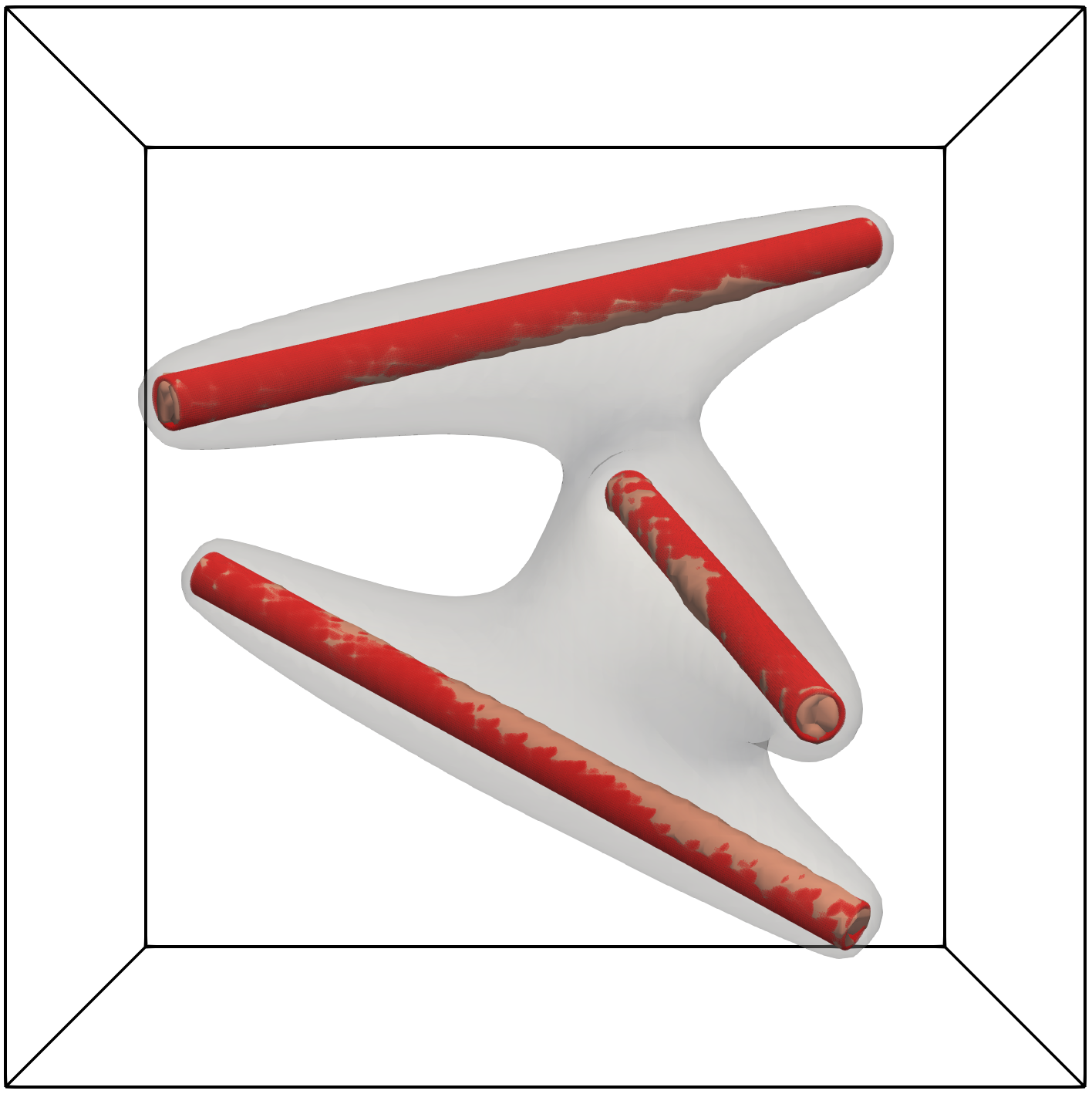}
    \hfill
    \includegraphics[width=.44\textwidth]{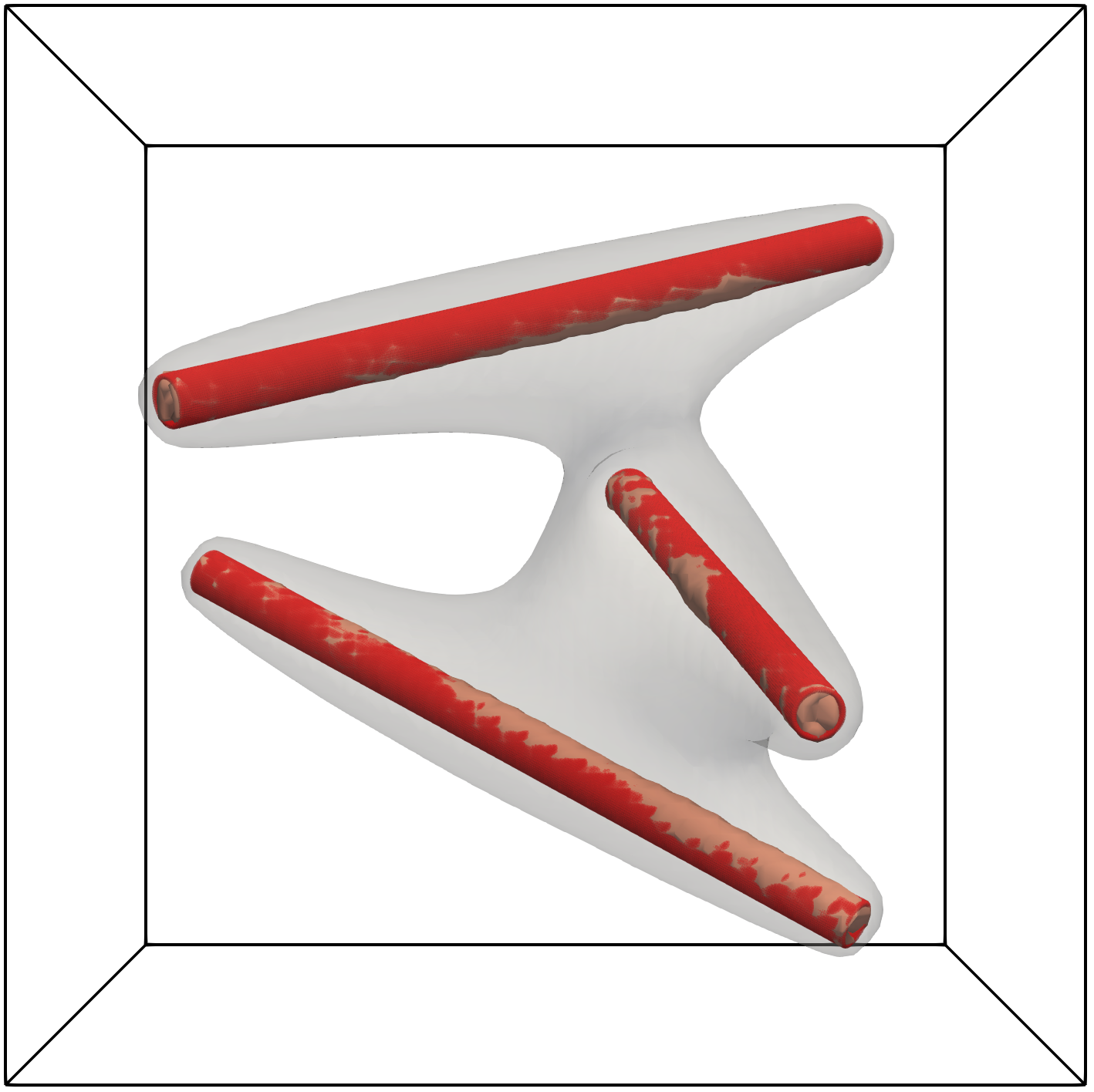}
    
    \caption{Numerical results for Problem~\eqref{eq:inclusions-dirichlet-3d} with one Fourier modes (left) and three Fourier modes (right), for inclusion radius $r=0.2$ (top), $r=0.1$ (center), and $r=0.05$ (bottom). The plots show iso-surfaces with values $u=1$ (red) and $u=0.5$ (light grey) of the solution.}
    \label{fig:results-3-inclusions}
\end{figure}

When the background mesh resolution is sensibly smaller than the radius of the
inclusions, a single Fourier mode is not enough to capture the behavior of the
solution around the inclusion, showing a large area of the solution inside the
domain $\Omega$ where the density is sensibly larger than one, violating the
maximum principle that would dictate a maximum value of the solution equal to
one. However, when the mesh resolution is comparable with the inclusion radius,
the solution is well approximated even with a single Fourier mode.

\begin{figure}
    \includegraphics[width=.45\textwidth]{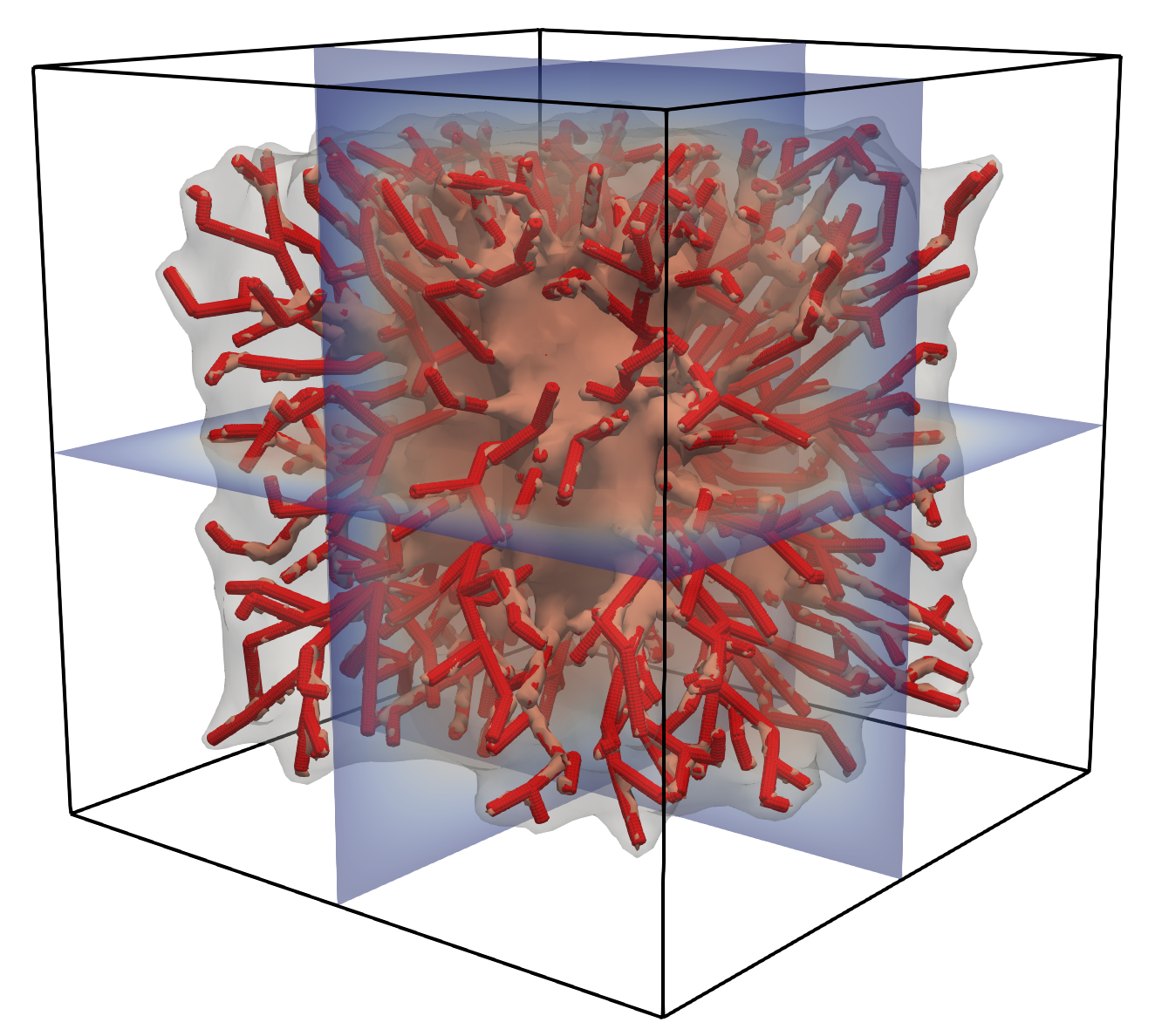}
    \hfill
    \includegraphics[width=.45\textwidth]{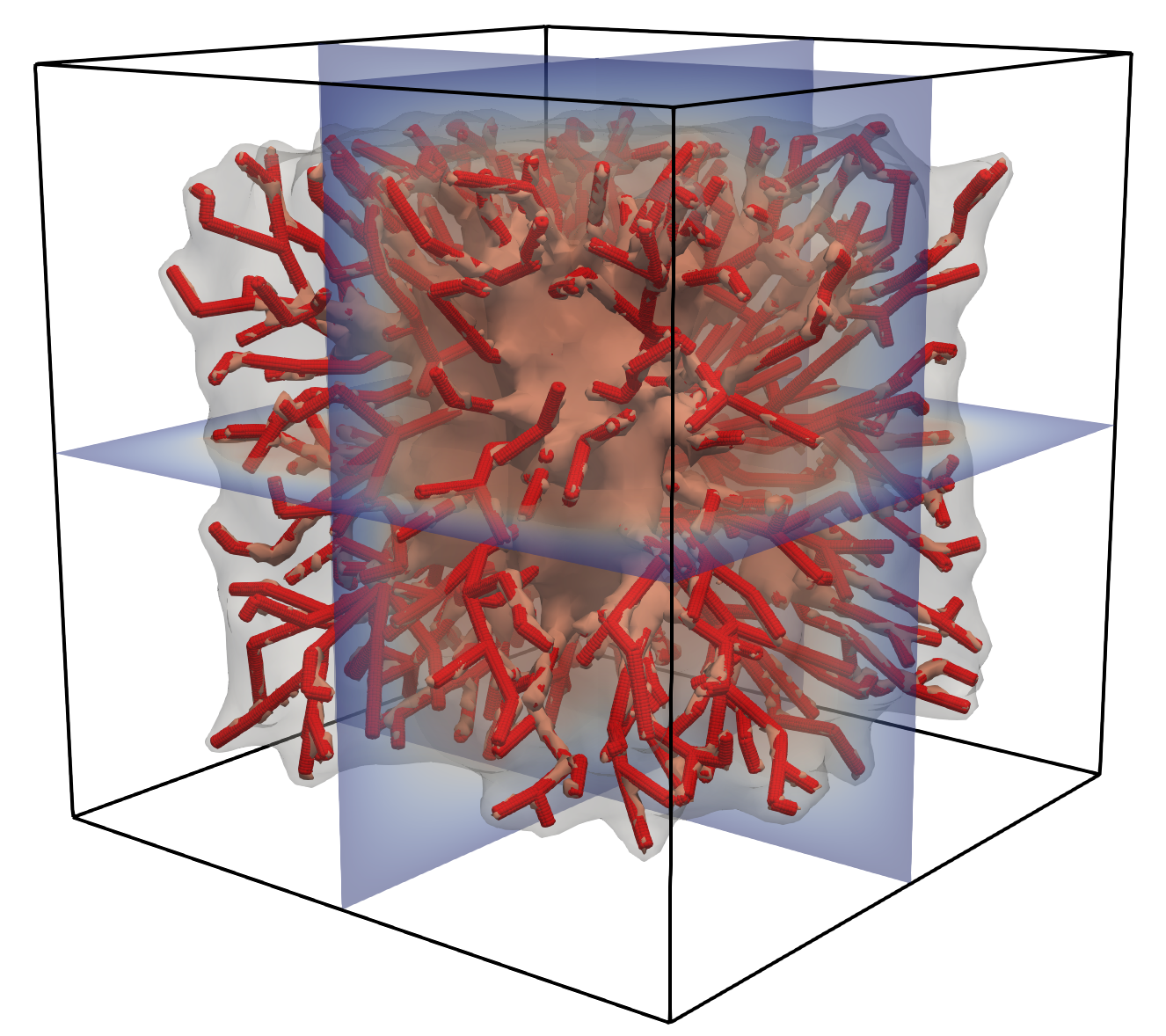}

    \caption{Numerical results for a complex vascular tree with radius $r=0.01$, with constant Dirichlet value equal to one on the boundary of the vessels, approximated with one Fourier mode (left) and three Fourier modes (right).}
    \label{fig:complex-tree}
\end{figure}

A further confirmation of this fact is presented in
Figure~\ref{fig:complex-tree}, showing the numerical solution of a complex
vascular tree with radius $r=0.01$, with constant Dirichlet value equal to one
on the boundary of the vessels, approximated with one Fourier mode (left) and
three Fourier modes (right). In this case, the radii of the vessels are
comparable with the grid size, the solution is well approximated even with a
single Fourier mode, and the dimensionality reduction error is in the same order of the finite
element approximation error.

\section{Conclusions}
We addressed a Lagrange multiplier method to couple mixed-dimensional problems, where the main difficulty is about the enforcement and approximation of boundary/interface constraints across dimensions.
We tackled this issue by means of a general approach, called the reduced Lagrange multiplier formulation, where a suitable restriction operator is applied to the classical Lagrange multiplier space. The mathematical properties of this formulation, precisely its well posedness, stability and corresponding error with respect to the original problem were thoroughly analyzed.

The fundamental ingredients for the reduced Lagrange multiplier formulation are the restriction and extension operators, discussed in Section \ref{sec:isomorphism} in the context of a general framework, and in Section \ref{sec:fourier} for the particular case of cylindrical inclusions embedded in three-dimensional domains (named 1D-3D coupling). This case is of particular interest for many applications, such as, fiber reinforced materials, microcirculation and perforated porous media.
For the specific case of 3D-1D mixed-dimensional problems we proposed a numerical discretization based on finite elements and we analyzed the stability and convergence of it. 

This work illustrates that the discrete scheme, including the dimensionality reduction and numerical approximation, is overall governed by three main parameters, $h,n,\epsilon$, the mesh characteristic size, the dimension of the Lagrange multiplier space and the size of the inclusion, respectively. The proposed approach offers full control on the different error sources, allowing us to optimally balance these components in the different scenarios where the inclusion is fully resolved or not resolved by the computational mesh.

\section{Acknowledgements}
The authors are members of Gruppo Nazionale per il Calcolo Scientifico (GNCS) of Istituto Nazionale di Alta Matematica (INdAM).

\bibliography{heltai-zunino-rev.bib}
\bibliographystyle{plain}

\end{document}